\numberwithin{equation}{section}
\theoremstyle{plain}
\newtheorem{theorem}{Theorem}
\newtheorem{proposition}{Proposition}
\newtheorem{lemma}{Lemma}[section]
\theoremstyle{definition}
\newtheorem{ProofDixEquiv}{Proof of Proposition \ref{dix.equiv}\hskip0pt}
\newtheorem{remark}{Remark}
\def\Re{\text{\rm Re}\,}
\def\Im{\text{\rm Im}\,}
\def\refBott  {1}  
\def\refDixon {2}
\def\refGGLS  {3}
\def\refGKZ   {4}
\def\refGLS   {5}
\def\refKov   {6}
\def\refLev   {7}
\def\refMaTo  {8}  
\def\refWaHu  {9}  
\def\refWh    {10} 
\def\refWu    {11} 
\def\DixI  {D1}
\def\DixII {D2}
\def\SDixI {SD1}
\def\SDixII{SD2}
\def\PDixII{PD2}
\def\DA    {CDA}
\begin{document}


\author{M.\,D.~Kovalev and S.\,Yu.~Orevkov}
\address{Moscow State University (M.K.)}
\address{IMT, l'universit\'e Paul Sabatier, Toulouse; Steklov Math. Inst., Moscow (S.O.)}

\title{  Complete bipartite graphs flexible in the plane}

\markboth{M.\,D.~Kovalev and S.\,Yu.~Orevkov}
         {Complete bipartite graphs flexible in the plane}

\maketitle

\begin{abstract}
A complete bipartite graph $K_{3,3}$, considered as a planar linkage with joints
at the vertices and with rods as edges, in general admits only motions as a whole,
i.e., is inflexible. Two types of its paradoxical mobility were found by Dixon in 1899.
Later on, in a series of papers by different authors, the question of flexibility of
$K_{m,n}$ was solved for almost all pairs $(m,n)$.
In the present paper, we solve it for all complete bipartite graphs in the
Euclidean plane as well as in the sphere and in the hyperbolic plane.
We give independent self-contained proofs without extensive computations
which are almost the same in the Euclidean, hyperbolic and spherical cases.
\end{abstract}

\section{Introduction. Main results}
\label{sec1}

We find necessary and sufficient conditions (Theorem~\ref{t1} and
Remarks~\ref{z1} and \ref{z2}) for the flexibility of frameworks corresponding
to complete bipartite graphs $K_{m,n}$ in the Euclidean plane $E^2$.
In \S\ref{sect.hyp} and \S\ref{sect.sph} we solve the same problem for
the hyperbolic plane $H^2$ and for the sphere $S^2$.
Most of the results are not new but we give complete
self-contained proofs which are almost the same for $E^2$, $H^2$, and $S^2$.
We do not know whether the hyperbolic case might have any independent interest,
but it serves as a very convenient bridge between the proof in
the Euclidean and in the spherical cases. Namely, when passing from $E^2$ to $H^2$,
only some formulas are changed but the geometric and combinatorial arguments
are exactly the same.
Whereas, when passing from $H^2$ to $S^2$, all the formulas are
the same (just with $\cos$ and $\sin$ instead of $\cosh$ and $\sinh$), and
only the combinatorial part is somewhat extended.

We define a {\it planar framework corresponding to the complete bipartite graph}
$K_{m,n}$ ($(m,n)$-{\it framework} for short) as a collection
of points in the Euclidean plane
${\bf p}=(p_1,\dots,p_m;\,q_1,\dots,q_n)$ 
such that $p_i\ne q_j$ for all $i$, $j$. The {\it parts} (of the bipartite graph)
are $(p_1,\dots)$ and $(q_1,\dots)$. Speaking of $(m,n)$-frameworks,
we call the points $p_i$ and $q_j$ {\it joints} and pairs of points $(p_i,q_j)$
from different parts {\it rods}.
We say that an $(m,n)$-framework $\bf p$ is {\it non-overlapping} if
all its joints are pairwise distinct.
Finally, we say that an $(m,n)$-framework is {\it flexible} if it admits
a {\it flex}, that is
a continuous non-constant motion of its joints
${\bf p}(t)=\big(p_1(t),\dots,q_n(t)\big)$, $t\in[0,1]$, such that
${\bf p}(0)=\bf p$, the lengths of the rods are constant, i.~e.~%
$|p_i(t)-q_j(t)|$ does not depend on $t$ for each $i,j$,
and some two joints from different parts do not move:
$p_{i_0}(t)=p_{i_0}$ and $q_{i_0}(t)=q_{i_0}$.
These definitions evidently can be extended to all connected graphs but we do not
need it.%
\footnote{In the literature on mechanics, rigid frameworks are usually called
{\it trusses}, and flexible ones are called, depending on the context,
{\it mechanisms} or {\it states} of a mechanism.}

\begin{theorem}\label{t1}
Let $\min(m,n)\ge 3$. Then a non-overlapping
$(m,n)$-framework is flexible if and only if one of the following conditions holds.

 (\DixI) The points $p_1,\dots,p_m$ lie on a line $P$, the points $q_1,\dots,q_n$
 lie on another line $Q$, and these two lines are orthogonal to each other.

 (\DixII)
 One can choose an orthogonal coordinate system and two rectangles with sides
 parallel to the axes and with common center of symmetry at the origin so that 
 $p_1,\dots,p_m$ are at the vertices of one rectangle
 and  $q_1,\dots,q_n$ are at the vertices of the other one. Since all points are
 distinct, we have in this case $m\le 4$ and $n\le 4$.
\end{theorem}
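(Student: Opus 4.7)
The plan is to prove both implications. For the \emph{if} direction I would exhibit explicit flexes. In case (\DixI), place coordinates so that $P$ is the $x$-axis and $Q$ the $y$-axis, whence $p_i=(a_i,0)$, $q_j=(0,b_j)$; then the one-parameter family $p_i(t)=(\sqrt{a_i^2+t},0)$, $q_j(t)=(0,\sqrt{b_j^2-t})$ preserves every $|p_i-q_j|^2=a_i^2+b_j^2$, and composition with a time-dependent rigid motion pins $p_{i_0}$ and $q_{j_0}$. In case (\DixII), with $p_i=(\pm a,\pm b)$ and $q_j=(\pm c,\pm d)$, every squared distance takes the form $a^2+b^2+c^2+d^2\pm 2ac\pm 2bd$, so any deformation $a(t),b(t),c(t),d(t)$ keeping $ac$, $bd$, and $a^2+b^2+c^2+d^2$ constant is a flex; three equations on four functions leave a one-parameter family.

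For the \emph{only if} direction I would first fix $p_1$ and $q_1$ throughout the flex (always possible, since $|p_1(t)-q_1(t)|$ is already a rod length, so one may post-compose the given flex with a time-dependent rigid motion). The central tool is the polarization identity
\[
(p_i-p_1)\cdot(q_j-q_1)=\tfrac12\bigl(|p_i-q_1|^2+|p_1-q_j|^2-|p_i-q_j|^2-|p_1-q_1|^2\bigr),
\]
whose right-hand side is an alternating combination of four squared rod lengths of $K_{m,n}$, hence constant along the flex. Thus $c_{ij}:=(p_i-p_1)\cdot(q_j-q_1)$ is an invariant. Setting $P_i:=p_i-p_1$, $Q_j:=q_j-q_1$, $w:=q_1-p_1$, the flex is encoded by a one-parameter family with $P_i$ on the circle $|P_i-w|=r_i$, $Q_j$ on $|Q_j+w|=s_j$, and satisfying the matrix identity $X^{\!\top}Y=C$, where $X=[P_2,\dots,P_m]$, $Y=[Q_2,\dots,Q_n]$, $C=(c_{ij})$.

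I would then split into two cases. \emph{Case A} (a collinear side): suppose, say, $p_1,\dots,p_m$ are collinear, so $P_i=\alpha_i\,u$ for a unit vector $u(t)$ and scalars $\alpha_i(t)$. The bilinear identity $\alpha_i\,(u\cdot Q_j)=c_{ij}$ forces the ratios $\alpha_i/\alpha_{i'}$ to be constants of the flex; combining this with the three circle constraints $|\alpha_i u-w|=r_i$ restricts how $u$ and the common scale may evolve, and one deduces that the $q_j$'s must also be collinear and that the two lines are perpendicular, yielding (\DixI). \emph{Case B} (both sides contain a non-collinear triple): after relabeling $P_2,P_3$ and $Q_2,Q_3$ are each linearly independent, so the $2\times 2$ sub-block of $X^{\!\top}Y=C$ is invertible and $Y=(X^{\!\top})^{-1}C$ expresses the $Q_j$'s as rational functions of $X$. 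Imposing the four circle constraints on $P_2,P_3,Q_2,Q_3$ produces an overdetermined algebraic system in the entries of $X$; the existence of a non-constant solution curve $X(t)$ must force the configuration to be centrally symmetric about the midpoint of $p_1q_1$, with the joints at the vertices of two axis-aligned rectangles whose axes are $w$ and $w^{\perp}$. Additional joints for $i>3$ or $j>3$ must then lie on the same circles while satisfying all bilinear relations, making them further vertices of the same rectangles and capping $m,n\le 4$, i.e.\ (\DixII).

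The main obstacle is Case B: extracting the precise rectangular structure from the combined algebraic constraints. A useful reduction is the invariant $(\det X)(\det Y)=\det C$, constant along the flex, which pins the evolution of $\det X(t)$ and greatly restricts the curve $X(t)$. I would organize the analysis around the $\mathbb{Z}/2$-symmetry $X\leftrightarrow -X$ inherent in the (\DixII) configuration and push the resulting identities until they force the axis-alignment of the rectangles, which is the geometric heart of Dixon's second mechanism.
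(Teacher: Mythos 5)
Your ``if'' direction is essentially correct and is worth writing down (the paper omits it): the explicit flex for (\DixI) works, and for (\DixII) the reduction to keeping $ac$, $bd$ and $a^2+b^2+c^2+d^2$ constant is right, though you should also post-compose with a rigid motion to pin two joints and check that the Jacobian of these three functions has rank $3$ (it does, because non-overlapping with $m,n\ge3$ forces $a,b,c,d\ne0$ and excludes $|a|=|c|$, $|b|=|d|$). Your starting point for the converse is also genuinely different from the paper's: you extract the bilinear invariants $c_{ij}=(p_i-p_1)\cdot(q_j-q_1)$ from the polarization identity and encode the flex as a curve on a product of circles subject to $X^{\top}Y=C$, whereas the paper fixes $p_0,q_0$, forms the transmission curves $C_1,C_2\subset Q_1\times Q_2$ as resultants $F_i=\mathrm{Res}_{T_i}(f_{i1},f_{i2})$, and derives the classification from a reducibility/common-component analysis of the complexified, compactified curves.

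The problem is that the converse --- which is the entire content of the theorem --- is not actually proved. In Case B the sentence ``the existence of a non-constant solution curve $X(t)$ must force the configuration to be centrally symmetric \dots\ at the vertices of two axis-aligned rectangles'' is precisely the assertion to be established, and you give no argument for it; likewise in Case A the deduction that the $q_j$ must be collinear on a line perpendicular to the $p_i$'s is only asserted. The invariants $c_{ij}$ and $(\det X)(\det Y)=\det C$ are necessary conditions satisfied by \emph{every} configuration with the given rod lengths, flexible or not (cf.\ Remark~\ref{z3}, where a rigid framework has exactly the Dixon-2 length data), so by themselves they cannot detect flexibility; the real work is to show that a one-parameter family of solutions of the circle constraints compatible with $X^{\top}Y=C$ exists only in the two Dixon situations. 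The paper needs five sections for this step: either $F_1,F_2$ are proportional (Lemma~\ref{lem.propor}, leading to (\DixI) via Lemma~\ref{collin}), or a common component forces, via matching of branch points of two-fold projections, that certain $4$-cycles are parallelogrammatic or deltoids (Lemmas~\ref{lem.f[i,j]}--\ref{main.lemma}), and a final resultant computation (\ref{res.bcd}) yields $a^2+c^2=b^2+d^2$. The cited experience of Walter--Husty (a resultant with $4.9\cdot10^6$ monomials) shows that ``pushing the resulting identities'' in an elimination of your overdetermined system is not a routine step, so as it stands your plan has a genuine gap exactly at the geometric heart of the theorem.
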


\begin{remark}\label{z1}
It is easy to see that any $(1,n)$-framework is flexible
(and has \hbox{$n-1$} degrees of freedom),
and a non-overlapping $(2,n)$-framework is flexible if and only if it does not
contain a quadruple of joints $p_i,q_j,p_k,q_l$ placed in this order on some straight
line (cf.~Lemma~\ref{nuiz}). All non-overlapping flexible $(m,n)$-frameworks
with $\min(m,n)\ge 2$ have one degree of freedom.
\end{remark}

\begin{remark}\label{z2}
It is evident that an $(m,n)$-framework $\bf p$ with overlapping joints is
flexible if and only if so is the non-overlapping
bipartite framework $\overline{\bf p}$, obtained by identifying each pair of
overlapping joints. It is also clear that the number of degrees of freedom
$d({\bf p})$ of $\bf p$ is equal to
$\max_{\bf q} d(\overline{\bf q})$ where the maximum is taken over all
frameworks $\bf q$ obtained from $\bf p$ by small flexes.
Thus $d({\bf p})=d(\overline{\bf p})\le 1$ unless $p_1=\dots=p_m$ (in which case
$d({\bf p})=n-1$) or, symmetrically, $q_1=\dots=q_n$, $d({\bf p})=m-1$.
\end{remark}

In the case $m=n=3$, the flexible frameworks (\DixI) and (\DixII) were discovered 
by Dixon (see \cite[\S27(d), \S28(n)]{\refDixon}). They are called
{\it Dixon mechanisms of the first and second kind} respectively.
We shall use these names for any $m,n\ge 3$.
The Dixon mechanism of the second kind for $m=n=4$ apparently was first described
by Bottema in \cite{\refBott} (see also \cite{\refWu}).
One can equivalently reformulate (\DixI) and (\DixII) in terms of the rod lengths.
In the case of (\DixII) we do it for $(m,n)=(3,3)$ only,
but analogous conditions for
$(3,4)$ and $(4,4)$ can be easily derived.

\begin{proposition}\label{dix.equiv}
(a). A non-overlapping $(m,n)$-framework is a Dixon mechanism of the first kind
(see Fig.~\ref{fig.dix1}(a)) if and only if, for each cycle $p_iq_jp_kq_l$,
the sums of squared lengths of the opposite sides are equal. The number of these
conditions is $\binom{m}2\binom{n}2=\frac14(m^2-m)(n^2-n)$ but it is easily seen that only $(m-1)(n-1)$ of them are independent; one can choose, for example,
only the conditions corresponding to the cycles $p_iq_jp_kq_l$ with fixed
$i$ and $j$ (in particular, four conditions are independent among the nine ones
when $m=n=3$). 

\smallskip
(b). A flexible non-overlapping $(3,3)$-framework $(p_0,p_1,p_2;\,q_0,q_1,q_2)$
is a Dixon mechanism of the second kind if and only if, up to renumbering of
the vertices in the parts,
$|q_0p_0|= |q_1p_1|=|q_2p_2|=a$, $|q_0p_1|=|q_1p_0|=b$, $|q_0p_2|=|q_2p_0|=c$,
$|q_1p_2|=|q_2p_1|=d$ (see Fig.~\ref{fig.dix2})
and the relation $a^2+c^2=b^2+d^2$ holds.
In this case, all the 4-cycles twice including $a$ are parallelogrammatic,
i.e., have opposite sides of equal lengths.
\end{proposition}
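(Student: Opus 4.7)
For Part~(a), my plan is to base the argument on the polarisation-type identity
\[
|p-q|^2+|r-s|^2-|p-s|^2-|r-q|^2 \;=\; -2\,(p-r)\cdot(q-s),
\]
which holds for any four points in $E^2$ and is a one-line expansion. Applied to the cycle $p_iq_jp_kq_l$, it translates the ``equal sums of opposite squared sides'' condition into the scalar orthogonality $(p_i-p_k)\perp(q_j-q_l)$. In a Dixon~I framework this is immediate, since every $p_i-p_k$ is a direction vector of $P$ and every $q_j-q_l$ one of the orthogonal line $Q$. For the converse, non-overlap guarantees some $p_i-p_k\neq0$; the orthogonality condition then forces all $q_j$'s onto a single line perpendicular to it, and a symmetric argument places all $p_i$'s on the orthogonal line. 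For the independence count I would set $\alpha_{ij}=p_i\cdot q_j$ and rewrite the condition as $\alpha_{ij}-\alpha_{il}-\alpha_{kj}+\alpha_{kl}=0$; this characterises the $m\times n$ matrix $(\alpha_{ij})$ as being of the additive form $\beta_i+\gamma_j$, an affine subspace of codimension exactly $(m-1)(n-1)$, and the cycles with $i,j$ fixed already cut out $(m-1)(n-1)$ independent defining hyperplanes.

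For Part~(b), the $(\Rightarrow)$ direction is a direct coordinate computation in the Dixon~II configuration. Parametrising the two rectangles as $\{(\pm x_1,\pm y_1)\}$ and $\{(\pm x_2,\pm y_2)\}$ I would expand
\[
|p_iq_j|^2 \;=\; x_1^2+x_2^2+y_1^2+y_2^2-2\varepsilon^{ij}\,x_1x_2-2\eta^{ij}\,y_1y_2,\qquad \varepsilon^{ij},\eta^{ij}\in\{\pm1\},
\]
so $|p_iq_j|^2$ takes at most four values $D_{\pm\pm}$ satisfying the single linear relation $D_{++}+D_{--}=D_{+-}+D_{-+}$. Equality of the three diagonal distances $|p_iq_i|$ forces the assignment $p_i\leftrightarrow q_i$ to correspond to a common sign flip, pinning $a^2$ to one fixed $D$-value (say $D_{++}$); a short combinatorial check on pair products of three of the four sign patterns shows that the three off-diagonal distances realise each of $D_{-+},D_{+-},D_{--}$ exactly once. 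After renumbering the parts so that the ``antipodal'' pair is $\{p_0,p_2\}$, I get $c^2=D_{--}$, and the linear relation $D_{++}+D_{--}=D_{+-}+D_{-+}$ becomes $a^2+c^2=b^2+d^2$. The parallelogrammatic character of the three cycles $p_iq_ip_jq_j$ then follows by inspection of the length pattern.

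For the $(\Leftarrow)$ direction I would invoke Theorem~\ref{t1}, which says that the flexible framework must be Dixon~I or Dixon~II. To exclude the Dixon~I case, I apply Part~(a) to each of the three 4-cycles $p_iq_ip_jq_j$: the opposite-sum equalities give $2a^2=2b^2$, $2a^2=2c^2$, $2a^2=2d^2$, forcing $a=b=c=d$. A Dixon~I configuration with all rods of the common length $a$ satisfies $x_j^2+y_i^2=a^2$ in the natural coordinates, making $x_j^2$ independent of $j$ and $y_i^2$ independent of $i$; each part then contains at most two distinct points, which contradicts the existence of three distinct $p$'s (or $q$'s). Hence the framework is Dixon~II.

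The main obstacle I anticipate is the combinatorial bookkeeping in the $(\Rightarrow)$ direction of Part~(b): among the four candidate values $D_{\pm\pm}$ of $|p_iq_j|^2$, one has to single out the ``antipodal'' value opposite to $a^2$ in the linear relation and then permute the indices within each part so that this value lands on precisely the edges $p_0q_2$ and $p_2q_0$, rather than on $p_0q_1,p_1q_0$ or $p_1q_2,p_2q_1$. Everything else is either an algebraic identity, a routine coordinate calculation, or a direct invocation of Theorem~\ref{t1}.
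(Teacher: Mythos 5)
Your part (a) and the forward direction of part (b) are correct and close to the paper's: the polarisation identity you start from is exactly the paper's Lemma~\ref{4kd} (equality of the sums of squared opposite sides of $p_iq_jp_kq_l$ is equivalent to $p_ip_k\perp q_jq_l$), and your sign-pattern bookkeeping on the two rectangles is a legitimate, more explicit version of the paper's ``direct computation''; the $\alpha_{ij}=p_i\cdot q_j$ argument is also a clean justification of the independence count that the paper only asserts.

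The genuine gap is in the $(\Leftarrow)$ direction of part (b): you cannot invoke Theorem~\ref{t1} here. In this paper the proof of Theorem~\ref{t1} for $m=n=3$ (\S\ref{sect.EOP}) ends precisely by showing that a flexible framework which is not Dixon~I carries the length pattern $a,b,c,d$ with $a^2+c^2=b^2+d^2$ and then appeals to Proposition~\ref{dix.equiv}(b) to conclude that it is a Dixon~II mechanism; the proposition is proved in \S\ref{sec1}, before Theorem~\ref{t1}, exactly so that it can play this role. Your argument is therefore circular relative to the paper's logical structure (it would only be saved by importing an independent proof of Theorem~\ref{t1}, e.g.\ from \cite{\refWaHu} or \cite{\refGLS}, which defeats the purpose of a self-contained proposition). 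The paper's own $(\Leftarrow)$ argument is direct and uses flexibility only through Lemma~\ref{fix}: by Lemma~\ref{4kd}, $a^2+c^2=b^2+d^2$ gives $p_0p_1\perp q_1q_2$ and $p_1p_2\perp q_0q_1$; the cycles $\Pi_{ij}=p_iq_ip_jq_j$ are parallelogrammatic, and if $\Pi_{01}$ and $\Pi_{12}$ were both non-degenerate parallelograms then $p_0q_0q_2p_2$ would be a rectangle, so a flex fixing $p_0,q_0$ would also fix $p_2,q_2$, contradicting Lemma~\ref{fix}; hence one of $\Pi_{01},\Pi_{12}$ is an antiparallelogram, the two orthogonality relations force the other to be one as well, and the mutually orthogonal symmetry axes of these antiparallelograms produce the two rectangles of (\DixII). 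Note also Remark~\ref{z3}: the length conditions alone, without flexibility, do not imply Dixon~II, so flexibility must enter the $(\Leftarrow)$ direction in some essential, non-circular way.
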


\begin{remark}\label{z3}
The following example shows that Statement (b) of
Proposition~\ref{dix.equiv} is wrong without the flexibility assumption: \hskip-1pt
$p_0=(b,0)$, $p_1=(0,a)$, $p_2=(d,0)$,
$q_0=(b,a)$, $q_1=(0,0)$, $q_2=(d,a)$,
where $a,b,d$ are positive, $b\ne d$, and $bd=a^2$.
Indeed, all the conditions on the rod lengths are satisfied in this case, but
the framework is not a Dixon 2nd kind mechanism.
This is also an example of two non-overlapping $(3,3)$-frameworks with
equal lengths of the respective rods, one of whom is flexible
(a Dixon mechanism of the 2nd kind) and the other one is rigid by
Proposition~\ref{dix.equiv}. This example is a particular case of the example
in Fig.~\ref{fig.dix1}(b).
\end{remark}

\begin{figure}[ht]
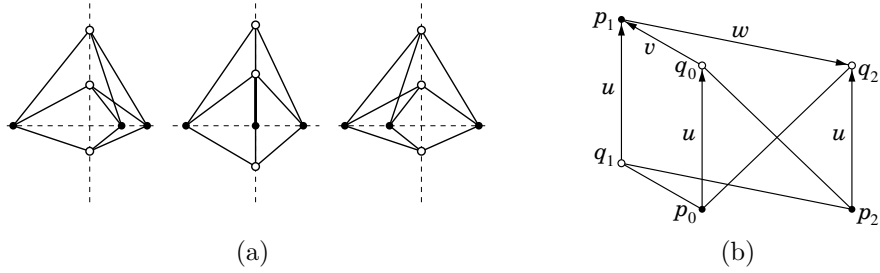

\centering
\begin{tabular}{ccc}
\includegraphics[width=67 mm]{dix1-def.eps}&\hbox to 5mm{}&
\includegraphics[width=38 mm]{dix-eq.eps}
\\
(a) && (b)
\end{tabular}
\caption{(a) Dixon mechanism of the first kind in motion.
(b) Rigid $(3,3)$-framework with lengths as in Dixon mechanism of the second kind.
Here the vectors $u$, $v$, and $w$ satisfy the relations $u^2+vw=u(v+w)=0$.}
\label{fig.dix1}
\end{figure}

\begin{figure}[ht]
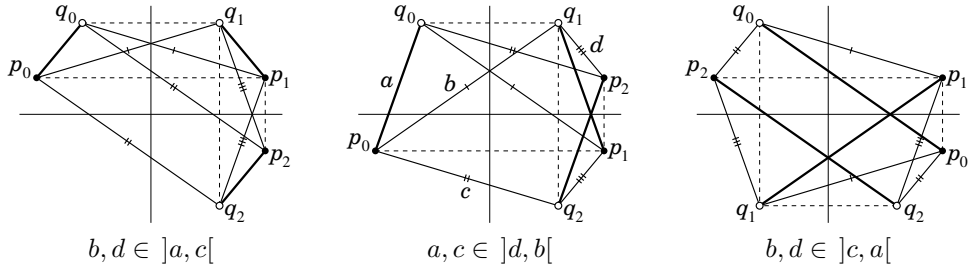

\begin{tabular}{ccccc}
\centering
\includegraphics[width=38 mm]{dix2a.eps}&\hskip 4mm&
\includegraphics[width=38 mm]{dix2b.eps}&\hskip 4mm&
\includegraphics[width=38 mm]{dix2c.eps}
\\
$b,d\in\,{]}a,c{[}$ && $a,c\in\,{]}d,b{[}$ && $b,d\in\,{]}c,a{[}$
\end{tabular}
\caption{Dixon mechanism of the second kind. The indicated conditions on the lengths
are determined (up to exchange of $b$ and $d$) by the fact,
whether $p_1$ and $q_1$ are in the same quadrant, in the adjacent ones,
or in the opposite ones. In all cases we see that $p_0q_0p_2q_2$ is a parallelogram
whereas $p_1q_1p_kq_k$, $k=0,2$, are antiparallelograms.}
\label{fig.dix2}
\end{figure}

The proof of Proposition~\ref{dix.equiv} is not difficult and it is given
at the end of this section. Notice that
Theorem~\ref{t1} is proven in \cite{\refMaTo} for $m\geq 3$ and $n\ge 5$. 
Also, as proven in \cite{\refWaHu}, the lengths of the rods of flexible
non-overlapping $(3,3)$-frameworks are as in Proposition~\ref{dix.equiv}.
This fact combined with Proposition~\ref{dix.equiv} yields
Theorem~\ref{t1} for $m=n=3$. Another proof of Theorem~\ref{t1} for $m=n=3$
is given in \cite[Example 4.3]{\refGLS}.
The reduction of the general case to the case $m=n=3$ is very simple.
It is as follows.

\begin{proof}[Proof of Theorem \ref{t1}
under the assumption that it holds for \hbox{$m=n=3$}.]
Consider a non-over\-lapping $(m,n)$-framework with $n\ge m\ge 3$.
The points $p_1,p_2,p_3$ and $q_1,q_2,q_3$ satisfy one of the conditions
(\DixI) or (\DixII). 

Let they satisfy (\DixI).
Then $p_1,p_2,p_3$ and $q_1,q_2,q_j$, $j\ge 3$, do not satisfy (\DixII).
Hence, since the subgraph spanned by them is flexible, they satisfy
(\DixI), i.e., $q_j$ is on the line $Q$.
Thus $q_1,\dots,q_n$ are all on $Q$.
By the same reason, $p_1,\dots,p_m$ are all on $P$.

Now suppose that $p_1,p_2,p_3$ and $q_1,q_2,q_3$ satisfy (\DixII).
Consider the $(3,3)$-framework
$(p_1,p_2,p_3;\,q_1,q_2,q_j)$, $j\ge 3$. It also satisfies (\DixII)
because (\DixI) cannot hold (for $p_1,p_2,p_3$ are not collinear).
A priori (\DixII) could hold for another choice of the axes of symmetry,
however, the triangle $p_1p_2p_3$ has a single pair of mutually orthogonal sides,
which uniquely determines the rectangle, and hence, it determines the axes.
Notice also that a rectangle which is symmetric with respect to the origin and which has
sides parallel to the axes, is determined by any of its vertices.
Therefore $q_1,q_2,q_3$, and $q_j$ are at the vertices of the same rectangle.
Analogously, $p_1,\dots,p_m$ are at the vertices of the same rectangle.
The theorem is proven.
\end{proof}

The rest of this section is devoted to the proof of Proposition~\ref{dix.equiv}.
In \S\S\ref{main.idea}--\ref{sect.EOP} we give a self-contained proof of
Theorem~\ref{t1} for $m=n=3$. In \S\ref{sect.hyp} and \S\ref{sect.sph} we
treat the hyperbolic and spherical cases respectively.

\begin{lemma}\label{fix} Let $m,n\ge2$. Then any flex of a non-overlapping
$(m,n)$-framework (see the definition above) leaves unmovable two joints only.
\end{lemma}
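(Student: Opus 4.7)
The plan is to argue by contradiction. Suppose that, in addition to the two joints $p_{i_0}$ and $q_{j_0}$ from different parts that are fixed by the definition of a flex, some third joint is also fixed along the whole motion ${\bf p}(t)$. By the symmetry between the two parts, I may assume this third fixed joint is some $p_{i_1}$ with $i_1\ne i_0$, that is, it lies in the same part as $p_{i_0}$. The goal will then be to show that under this assumption the flex is constant, contradicting the non-constancy in the definition.

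First I would use the non-overlapping hypothesis: since all joints are pairwise distinct, $p_{i_0}\ne p_{i_1}$. For any index $j$, the rod-length conditions give
\[
|q_j(t)-p_{i_0}|=|q_j-p_{i_0}|,\qquad |q_j(t)-p_{i_1}|=|q_j-p_{i_1}|,
\]
so $q_j(t)$ lies in the intersection of two circles with distinct centres, which contains at most two points. Since $t\mapsto q_j(t)$ is continuous with $q_j(0)=q_j$, the point $q_j(t)$ must remain equal to $q_j$ for all $t$. Hence every joint of the second part is fixed throughout the flex.

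Now I use $n\ge 2$ to pick some $j_1\ne j_0$. Applying the same argument with the roles of the parts exchanged and with $q_{j_0},q_{j_1}$ (again distinct, by non-overlapping) as the two fixed centres, I conclude that every $p_i$ with $i\ne i_0,i_1$ also lies in the intersection of two circles with distinct centres, and is therefore fixed by continuity. Thus ${\bf p}(t)\equiv{\bf p}$, contradicting the requirement that the flex be non-constant.

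There is no real obstacle here; the argument rests only on the elementary fact that two circles with distinct centres meet in at most two points and on the continuity of the motion. The one point that must be handled carefully is the invocation of the non-overlapping hypothesis to ensure that the two circle centres used in each step are genuinely distinct, which is what guarantees the finite intersection and hence the rigidity by continuity.
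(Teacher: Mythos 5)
Your proof is correct and is essentially the paper's own argument: the paper's one-line proof invokes exactly the fact you establish, namely that the immobility of two (distinct) joints of one part forces, via the two-circle intersection and continuity, the immobility of all joints of the other part, and then iterates. Your write-up just spells out the details (non-overlapping gives distinct centres, a continuous map into a finite set is constant) that the paper leaves implicit.
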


\begin{proof} The statement follows from the fact that the immobility of any
two joints of one part implies the immobility of all joints of the other part.
\end{proof}

\begin{lemma}\label{4kd} The diagonals of a quadrilateral (maybe, self-crossing) are
orthogonal if and only if the sums of the squared lengths of the opposite sides
are equal.
\end{lemma}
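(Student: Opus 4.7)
The plan is to prove Lemma~\ref{4kd} by a direct computation with position vectors. Let the quadrilateral have vertices $A,B,C,D$ (in cyclic order), so that the sides are $AB$, $BC$, $CD$, $DA$ and the diagonals are $AC$ and $BD$. Treat the vertices as vectors in $E^2$ and expand the squared side lengths via the scalar product, writing $|XY|^2=(Y-X)\cdot(Y-X)$.

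The key step is the identity
\begin{equation*}
|AB|^2+|CD|^2-|BC|^2-|DA|^2 = 2\,(A-C)\cdot(D-B),
\end{equation*}
which follows immediately by expanding all four squared norms and collecting terms: the pure quadratic contributions $A\cdot A$, $B\cdot B$, $C\cdot C$, $D\cdot D$ cancel in pairs, and the cross terms combine into the advertised scalar product. Since the left-hand side vanishes precisely when the sums of squared lengths of opposite sides are equal, and the right-hand side vanishes precisely when the diagonal vectors $C-A$ and $D-B$ are orthogonal, the equivalence follows.

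There is no real obstacle here; the only thing to notice is that the argument makes no use of convexity or of the quadrilateral being non-degenerate, so it applies verbatim to self-crossing and even degenerate configurations, which is exactly the generality needed when this lemma is later invoked for arbitrary 4-cycles $p_iq_jp_kq_l$ in an $(m,n)$-framework.
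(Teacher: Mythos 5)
Your proof is correct and is essentially the same as the paper's: both establish the polarization identity expressing twice the dot product of the diagonals as the alternating sum of squared side lengths (the paper writes it with side vectors $u,v,w$ as $2(u+v)(v+w)=v^2+(u+v+w)^2-u^2-w^2$, you write it with position vectors). Your added remark that the argument needs no convexity or non-degeneracy matches how the lemma is used for arbitrary, possibly self-crossing 4-cycles.
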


\begin{proof} Let $u,v,w$ be the vectors of three consecutive sides of the quadrangle.
Then twice the dot product of the diagonals is
$2(u+v)(v+w)=v^2+(u+v+w)^2-u^2-w^2$.
\end{proof}

It is clear that any parallelogrammatic non-overlapping quadrangle is
either a {\it parallelogram} (when its opposite sides are parallel) or
an {\it antiparallelogram} (when its diagonals are parallel).
It is both simultaneously if and only if it is {\it degenerate},
i.e., all its verices are collinear.

\begin{ProofDixEquiv}
(a). The statement follows from Lemma~\ref{4kd}.

(b). The condition on the lengths is derived from (\DixII) by a direct computation.
Let us prove the inverse implication.
Since  $a^2+c^2=b^2+d^2$, Lemma~\ref{4kd} implies that the diagonals
of the quadrangle $p_0q_1p_1q_2$ are mutually orthogonal.
The same is true for the diagonals of $q_0p_1q_1p_2$
(see Fig.~\ref{fig.dix2}),
i.e., $p_0p_1\perp q_1q_2$ and $q_0q_1\perp p_1p_2$.
By hypothesis, the cycles $\Pi_{ij}=p_iq_ip_jq_j$, $i<j$, are parallelogrammatic.

Suppose that both $\Pi_{01}$ and $\Pi_{12}$ are non-degenerate parallelograms
(see Fig.~\ref{fig.dix1}(b)).
Then $p_0q_0q_2p_2$ also is a parallelogram and, since its both diagonals are
of length $c$, it is a rectangle with sides $a$ and $\sqrt{c^2-a^2}$.
Hence any flex fixing $p_0$ and $q_0$ fixes
$p_2$ and $q_2$ as well, which contradicts Lemma~\ref{fix}.

The obtained contradiction shows that $\Pi_{01}$ or $\Pi_{12}$ is an
antiparallelogram (maybe, degenerate). Let it be $\Pi_{01}$
(the case of $\Pi_{12}$ is analogous).
Then $q_1q_2{\perp}p_0p_1||q_0q_1{\perp}p_1p_2$, hence $q_1q_2\,||\,p_1p_2$,
i.e., $\Pi_{12}$ also is an antiparallelogram.
Hence $q_0$ and $q_2$ are symmetric to $q_1$ with respect to the mutually
orthogonal symmetry axes of these antiparallelograms (see Fig.~\ref{fig.dix2}).
The same is true for $p_0, p_2$, and $p_1$. The proposition is proven.
\end{ProofDixEquiv}

\medskip
\noindent
{\bf Acknowledgement.} We thank Matteo Gallet for informing us about the
papers \cite{\refGGLS} and \cite{\refGLS} and for some comments on them.


\section{A general scheme of the proof of Theorem~\ref{t1} for $m=n=3$.}\label{main.idea}

Consider a flex of a non-overlapping $(3,3)$-framework
${\bf p}=(p_0,p_1,p_2;\,q_0,q_1,q_2)$ such that
the joints $p_0$ and $q_0$ are fixed.
Then $p_1,p_2,q_1,q_2$ move along the circles which we denote by
$P_1,P_2,Q_1,Q_2$ respectively.
Forget for a while the joint $p_2$. Then generically (when the segment
$q_1p_1$ is not orthogonal to $P_1$) the displacement of
$q_1$ uniquely determines the displacement of $p_1$, which, in its turn,
generically determines the displacement of $q_2$. We obtain a dependence
$q_2={\mathcal F}_1(q_1)$ (see Fig.~\ref{fig.transm}).%
\footnote{%
In engineering, 
this dependence is called {\it
zero order transmission function} or {\it position function}
(see, e.g., \cite[\S 41]{\refLev}).}
Analogously, $p_2$ ensures a dependence $q_2={\mathcal F}_2(q_1)$.
In order for our $(3,3)$-framework not to be jammed, the functions
${\mathcal F}_1$ and ${\mathcal F}_2$ should coincide.
The point $(q_1,{\mathcal F}_i(q_1))$, $i=1,2$, moves along a certain
real algebraic curve $C_i$ on the torus
$Q_1\times Q_2$. The flexibility of $\bf p$ requires that $C_1$ and $C_2$
have an irreducible component in common.

\begin{figure}[ht]
\centering
\includegraphics[width=100 mm]{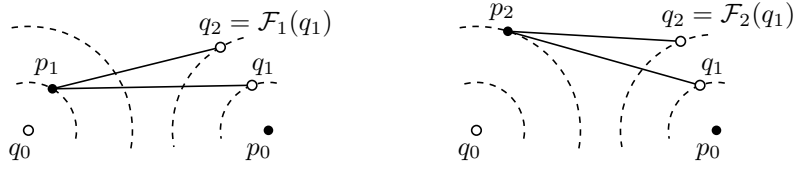}%
  \put(-270,35){$p_1$}\put(-188,35){$q_1$}\put(-18,35){$q_1$}
  \put(-208,50){$q_2={\mathcal F}_1(q_1)$}\put(-98,56){$p_2$}
  \put(-33,53){$q_2={\mathcal F}_2(q_1)$}
  \put(-280,3){$q_0$}\put(-190,3){$p_0$}
  \put(-110,3){$q_0$}\put(-20,3){$p_0$}
\caption{The transmission functions ${\mathcal F}_1$ and ${\mathcal F}_2$}.
\label{fig.transm}
\end{figure}

Let us proceed to a more formal exposition. Fix two points
$p_0,q_0\in{\mathbb{R}}^2$. Without loss of generality we may set
$q_0=(0,0)$ and $p_0=(r,0)$. Fix real positive numbers
$r_{ij}$, $i,j\in\{0,1,2\}$, $r_{00}=r$.
Denote also $R_i=r_{i0}$ and $r_i=r_{0i}$, $i=1,2$.
Let $M$ be the set of all quadruples $(p_1,p_2;\,q_1,q_2)$ such that
$|p_iq_j|=r_{ij}$, $i,j\in\{0,1,2\}$. It is natural to consider $M$ as the
moduli space of $(3,3)$-frameworks with a given matrix of the lengths.
Abusing the language, we shall call the elements of $M$ also 
$(3,3)$-frameworks implicitly assuming them to include $p_0$ and $q_0$.
As above, we define the circles
$$
   P_k=\{p_k\in{\mathbb{R}}^2 \;:\; |q_0p_k|=R_k\},\qquad
   Q_k=\{q_k\in{\mathbb{R}}^2 \;:\; |p_0q_k|=r_k\},\qquad k=1,2,
$$
and set $Q=Q_1\times Q_2$.
For $k=1,2$, consider the space of $(2,3)$-frameworks
$(p_0,p_k;\,q_0,q_1,q_2)$ with these lengths:
$$
   M_k=\{(p_k,q_1,q_2)\in P_k\times Q\;:\; |p_kq_1|=r_{k1},\; |p_kq_2|=r_{k2}\}.
$$
Set $C_k=\tau_k(M_k)$ where $\tau_k:P_k\times Q\to Q$, $k=1,2$, are the standard
projections (these are the curves appeared in the above discussion
of transmission functions).
It is clear that generically $C_1$ and $C_2$ are algebraic curves on $Q$
(though if, for example, $M$ has an element such that $p_0=p_k$, then $C_k=Q$).
Let us find the defining equations for $C_1$ and $C_2$.
As in \cite{\refWaHu}, we parametrize the circle $Q_j$, $j=1,2$, by
a complex number $t_j$, running over the circle $|t_j|=r_j/r$ in the complex plane.
The coordinates of the vector $p_0q_j$ are $(\Re t_j,\Im t_j)$,
in other words, the parameter of $q_j$ is the image of the vector $p_0q_j$ under
the standard identification of $\mathbb{R}^2$ with ${\mathbb{C}}$.
Analogously we choose parameters $T_i$ on the circles $P_i$. 
In these coordinates, the conditions $|p_iq_j|=r_{ij}$, $i,j=1,2$, 
take the form $f_{ij}(T_i,t_j)=0$ where $f_{ij}$ is the numerator of the rational
function obtained from the expression
$r^2(1+T_i - t_j)(1+\overline T_i - \overline t_j)-r_{ij}^2$
by the replacement $\overline T_i = R_i^2/(r^2 T_i)$,
$\,\overline t_j = r_j^2/(r^2 t_j)$,
i.e., (cf.~\cite[eqs.~(6)--(9)]{\refWaHu})
$$
   f_{ij}(T_i,t_j) = (1 + T_i - t_j)(r^2 T_it_j + R_i^2 t_j - r_j^2 T_i)
                        - r_{ij}^2T_it_j.
$$
$C_i$ is the projection of set of solutions of the system of equations
$f_{i1}=f_{i2}=0$, hence
it is given by the equation $F_i(t_1,t_2)=0$ where
\begin{equation}\label{F[i]}
      F_i(t_1,t_2)=R_i^{-2}\,\text{Res\,}_{T_i}(f_{i1},f_{i2})
\end{equation}
(see Remark~\ref{rem.GKZ} below).
The expression for $F_i$ (as a polynomial in $t_1$, $t_2$ and in all the $r_{ij}$'s)
has 126 monomials and we have $\deg_{t_j}F_i=4$.
In the case of a non-overlapping flex, the images of $M$ in $Q_j$
are not discrete by Lemma~\ref{fix}, which implies the following fact.

\begin{lemma}\label{lem.res}
If $M$ contains a flexible non-overlapping framework, then
\begin{equation}\label{eq.res}
    \text{\rm Res\,}_{t_1}(F_1,F_2)=\text{\rm Res\,}_{t_2}(F_1,F_2)=0.
\end{equation}
\end{lemma}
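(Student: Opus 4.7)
The plan is to use the flex to exhibit infinitely many common complex zeros of $F_1$ and $F_2$, and then to invoke the standard fact that the resultant of two univariate polynomials vanishes precisely when they have a common root. By definition of a flex we may assume $p_0$ and $q_0$ are held fixed; Lemma~\ref{fix} then forces every other joint to move, so in particular $q_1(s)$ and $q_2(s)$ are non-constant continuous functions of the flex parameter $s\in[0,1]$. Accordingly, $t_1(s)$ and $t_2(s)$ are non-constant continuous $\mathbb{C}$-valued functions on a connected interval, with images on the circles $|t_j|=r_j/r$, so each of them takes infinitely many values.

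For every $s$ the triple $(p_k(s),q_1(s),q_2(s))$ lies in $M_k$, so its $\tau_k$-image lies in $C_k\subseteq V(F_k)$ for $k=1,2$. Hence $s\mapsto(t_1(s),t_2(s))$ is a continuous arc in $V(F_1)\cap V(F_2)\subset\mathbb{C}^2$ whose projection onto either coordinate axis of $Q_1\times Q_2$ is infinite.

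Finally, I would invoke the specialization property of the resultant: whenever $(\tau_1,\tau_2)\in V(F_1)\cap V(F_2)$, the univariate polynomials $F_1(T_1,\tau_2)$ and $F_2(T_1,\tau_2)$ in $T_1$ share the root $\tau_1$, so the Sylvester matrix (with formal degrees equal to $\deg_{t_1}F_i$) specialized at $t_2=\tau_2$ is singular, i.e.\ $\text{\rm Res\,}_{t_1}(F_1,F_2)\in\mathbb{C}[t_2]$ vanishes at $\tau_2$. Thus the $t_2$-projection of $V(F_1)\cap V(F_2)$ sits inside the zero set of this univariate polynomial, which is finite unless the polynomial vanishes identically. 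Since the projection was shown to be infinite, we conclude $\text{\rm Res\,}_{t_1}(F_1,F_2)=0$, and the symmetric argument gives $\text{\rm Res\,}_{t_2}(F_1,F_2)=0$. There is no serious obstacle here: the geometric input is exactly Lemma~\ref{fix} (which is what rules out a degenerate flex in which one of $q_1,q_2$ stays put and the argument would collapse), and the rest is the elementary Sylvester-matrix description of the resultant.
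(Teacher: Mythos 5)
Your proof is correct and is essentially the paper's own argument, which is given in one line just before the lemma: Lemma~\ref{fix} forces the images of $M$ in $Q_1$ and $Q_2$ to be infinite, so the resultants, being polynomials with infinitely many zeros, vanish identically. (Only a typo to note: $F_1(T_1,\tau_2)$ should read $F_1(t_1,\tau_2)$.)
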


Thus the search of all flexible $(3,3)$-frameworks is reduced to a computation
of the resultant of $F_1$ and $F_2$ and a solution of the system of equations
obtained by equating all its coefficients to zero.
This is the way Walter and Husty have obtained in \cite{\refWaHu} the result
(mentioned in the introduction) that the lengths of the rods of flexible
non-overlapping $(3,3)$-frameworks are always as in Dixon's mechanisms.
According to \cite{\refWaHu}, $\text{Res}(F_1,F_2)$
has 4.900.722 monomials, and it is said in \cite{\refWaHu} that
``the computations are very extensive with respect to time and memory''.
Also, as far as we understood from \cite{\refWaHu}, one needs to do some
programming to interpret the solutions obtained with Maple or Singular.

When we started working on flexible $(3,3)$-frameworks (not knowing
about the paper \cite{\refWaHu}), we also tried to solve this system of
equations. However, we did not succeed to overcome the computational difficulties
and looked for how to avoid them.%
\footnote{%
    Probably, we would not do it,
    if we were acquainted that time with the paper \cite{\refWaHu}.}
So we found the proof exposed below. The longest computation in our proof is
that of the resultant (\ref{res.bcd}), which takes 25\,ms of CPU time.
It should be pointed out that the choice of the parameters $T_i$ and $t_j$ borrowed
from \cite{\refWaHu} further simplified the computations in Lemma~\ref{lem.propor}
(initially, we used the standard parametrization of the circle by the
tangent half-angle).

The outline of our proof is as follows.
If $M$ contains a flexible non-overlapping $(3,3)$-framework, then the curves
$C_1$ and $C_2$ have a common component, i.e., the polynomials $F_1$ and $F_2$
have a common divisor.
If one of $F_1$, $F_2$ is irreducible, they are proportional.
This gives equations, which are easy to solve.

If $F_1$ and $F_2$ have a common divisor not being proportional,
we look how the complexifications of the curves $M_i$, $C_i$,
$C_{ij}=\{f_{ij}=0\}$, and $P_i$ are mapped to each other under the projections.
A not difficult study shows that, for each $i=1,2$,
either one of $C_{ij}$ is reducible, or
the projections $C_{i1}\to P_i$ and $C_{i2}\to P_i$ are ramified over the same points.
Both conditions lead to equations which allow us to conclude that the framework
contains either a parallelogrammatic cycle or a {\it deltoid}
(a 4-cycle symmetric with respect to a diagonal) arranged in a certain way
with respect to $p_0$ and $q_0$. Varying the choice of the fixed joints we arrive
either to Dixon-1 or to a framework which contains
three parallelogrammatic cycles adjacent to each other as in Dixon-2.
In the latter case, the resultant of $F_1$ and $F_2$ is easy to compute.

\begin{remark}\label{rem.GKZ}
Even when the coefficients of $T_i^2$ in $f_{ij}$ vanish, the resultant
in (\ref{F[i]}) is understood as the resultant of quadratic polynomials
($R_{2,2}$ in the notation of \cite[Ch.~12]{\refGKZ}, that is the determinant of
the $4\times 4$ Sylvester matrix). Similarly, the resultants in (\ref{eq.res}) and
the discriminants $D_j$ and $\Delta_j^\pm$ in \S\ref{sect.reduc} always correspond to
$R_{4,4}$ and $D_2$ from \cite[Ch.~12]{\refGKZ}.
\end{remark}


\section{Preliminary lemmas}\label{sect.prelim}

\begin{lemma}\label{nuiz}
{\rm(Immediate from Lemma~\ref{fix}.)}
If an $(m,n)$-framework, $m,n\geq 2$, contains a \hbox{4-cycle} with a rod
whose length is equal to the sum of the lengths of the three other rods of the cycle,
then the framework is not flexible.
\end{lemma}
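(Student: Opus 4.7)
The plan is to assume the framework is flexible and derive a contradiction by combining a rigid collinearity forced by the hypothesis with the completeness of the bipartite graph. Label the four-cycle as $p_aq_bp_cq_d$ so that the long side is $p_aq_d$, i.e.\ $|p_aq_d|=|p_aq_b|+|q_bp_c|+|p_cq_d|$. Applying the triangle inequality twice along the diagonal $p_ap_c$ gives
\[
  |p_aq_d|\;\le\;|p_ap_c|+|p_cq_d|\;\le\;|p_aq_b|+|q_bp_c|+|p_cq_d|,
\]
and the hypothesis forces equality throughout. Hence $q_b$ lies on the segment $p_ap_c$ and $p_c$ lies on the segment $p_aq_d$, so the four joints are collinear in the cyclic order $p_a,q_b,p_c,q_d$ with the prescribed consecutive distances. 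Since all four numbers are rod lengths preserved along any flex, this collinear configuration persists throughout the motion.

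Next I re-gauge the flex using the completeness of the bipartite graph. The pair $(p_a,q_b)$ is joined by a rod, so $|p_aq_b|$ is constant along the flex. Composing the given flex with the continuous one-parameter family of rigid motions that send $\bigl(p_a(t),q_b(t)\bigr)$ back to $\bigl(p_a(0),q_b(0)\bigr)$ (the orientation-preserving branch starting from the identity) yields an equivalent motion preserving every rod length in which $p_a$ and $q_b$ are now the fixed joints. This new motion is still non-constant, for otherwise the original flex would be a global rigid motion of the whole framework, and the only rigid motion fixing the two distinct joints $p_{i_0},q_{j_0}$ is the identity. In the re-gauged flex the line through $p_a,q_b$ is a fixed line, so by the collinear rigidity established above, $p_c$ and $q_d$ are forced to lie at determined positions on this line, hence are also fixed.

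We now have four fixed joints, two in each part. The fact used in the proof of Lemma~\ref{fix}---that a joint at constant distance from two fixed joints of the opposite part is itself fixed by continuity---applied first to $p_a,p_c$ forces every $q_j$ to be immobile, and then symmetrically every $p_i$, contradicting the non-constancy of the flex. The step I expect to be most delicate is the re-gauging: one has to verify that absorbing a time-dependent rigid motion does not kill the non-triviality of the motion. This is rescued precisely by the original flex already fixing two distinct joints of different parts, which prevents it from being itself a nontrivial global rigid motion. The rest of the argument is then a direct invocation of the mechanism already used to prove Lemma~\ref{fix}.
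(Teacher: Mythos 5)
Your proof is correct and fills in exactly what the paper leaves implicit in ``immediate from Lemma~\ref{fix}'': the triangle-inequality degeneration makes the 4-cycle a rigid collinear body, re-gauging lets one take the two fixed joints inside that cycle, and then the two-fixed-joints-per-part mechanism of Lemma~\ref{fix} freezes everything. The only cosmetic point is that ``the only rigid motion fixing two distinct joints is the identity'' needs ``orientation-preserving'' (or continuity from the identity), which your choice of branch already supplies.
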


\begin{lemma}\label{collin}
Let ${\bf p}=(p_0,p_1,p_2;\,q_0,q_1,q_2)$ be a flexible non-overlapping
$(3,3)$-framework. Suppose that
$|p_0q_j|=|p_1q_j|$ for all $j=0,1,2$, i.e.,
the joints $p_0$ and $p_1$ are equidistant from $q_0,q_1,q_2$.
Then ${\bf p}$ is a Dixon mechanism of the first kind.
\end{lemma}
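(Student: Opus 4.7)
The hypothesis $|p_0q_j|=|p_1q_j|$ for $j=0,1,2$ says exactly that $q_0,q_1,q_2$ lie on the perpendicular bisector $Q$ of the segment $p_0p_1$. Letting $P$ be the line through $p_0$ and $p_1$, we automatically have $P\perp Q$; to exhibit the framework as Dixon's first kind it suffices to show $p_2\in P$.

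The key point driving the plan is that the rod-length equalities $r_{0j}=r_{1j}$ persist throughout any flex, so $q_j(t)$ lies on the perpendicular bisector $Q(t)$ of $p_0(t)p_1(t)$ for every $t$. By composing the flex with a time-varying isometry I will arrange that $q_0=0$ and $p_0=(r,0)$, $r=r_{00}$, are the two joints immobilized by the flex; then by Lemma~\ref{fix} every other joint moves non-trivially. Since $|p_1(t)-q_0|=r_{10}=r$, the joint $p_1(t)=re^{i\theta(t)}$ traces a non-degenerate arc of the circle of radius $r$ about the origin, and $Q(t)$ (the perpendicular from the center to a chord) always passes through the origin. Writing $u=\theta/2$, $Q(t)$ becomes the line $\mathbb{R}\cdot e^{iu}$ and $q_k(t)=s_k(u)\,e^{iu}$ with $s_k$ satisfying the quadratic $s_k^2-2rs_k\cos u+r^2-r_k^2=0$.

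I will next transcribe the conditions $|p_2(t)-q_k(t)|=r_{2k}$, $k=1,2$, into functional identities in $u$. Writing $p_2(t)=r_{20}\,e^{i\psi(t)}$ (permitted by $|p_2-q_0|=r_{20}$) and using the quadratic relation above to simplify, these constraints become
\[
   s_k(u)\bigl(r_{20}\cos(\psi-u)-r\cos u\bigr)=A_k/2,\qquad
   A_k:=r_k^2+r_{20}^2-r_{2k}^2-r^2,
\]
and eliminating $\psi$ yields the identity $A_1\,s_2(u)=A_2\,s_1(u)$, which must hold on an open interval of $u$ because $p_1$ actually moves. The heart of the proof, and the main obstacle, is a short case analysis showing that this forces $A_1=A_2=0$: with $s_k(u)=r\cos u\pm\sqrt{r_k^2-r^2\sin^2u}$, every alternative turns out, by linear independence of $\cos u$ and the two square roots (with the degenerate situations $r_k=r$ and $r_1=r_2$ treated separately), to force either $s_k(u)\equiv 0$ (so $q_k=q_0$) or $s_1(u)\equiv s_2(u)$ (so $q_1=q_2$), each forbidden by the non-overlapping hypothesis.

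With $A_1=A_2=0$ in hand, Lemma~\ref{4kd} applied to each $4$-cycle $p_0q_0p_2q_k$ translates $A_k=0$ into the orthogonality $p_0p_2\perp q_0q_k$. Because $q_0,q_1,q_2$ are collinear on $Q$, both $q_0q_1$ and $q_0q_2$ point along $Q$; hence $p_0p_2\perp Q$, placing $p_2$ on the line through $p_0$ perpendicular to $Q$, namely $P$. This gives $p_0,p_1,p_2\in P$ and $q_0,q_1,q_2\in Q$ with $P\perp Q$, which is precisely condition~(\DixI).
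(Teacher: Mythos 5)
Your argument is correct, but it follows a genuinely different route from the paper's. The paper starts from the same observation --- that $q_0,q_1,q_2$ ride the perpendicular bisector of $p_0p_1$ throughout the flex --- but then argues \emph{infinitesimally}: it places the $q_j$'s on the axis $y=0$ and $p_0,p_1$ on $x=0$, records the identities $r_0^2-x_0^2=r_j^2-x_j^2$ and $R_0^2-(x_0-a)^2=R_j^2-(x_j-a)^2$ (with $a$ the $x$-coordinate of $p_2$), differentiates them along the flex, and notes that the resulting homogeneous $4\times4$ linear system for $x_0',x_1',x_2',a'$ has determinant $a(x_0-x_1)(x_0-x_2)(x_1-x_2)$; flexibility plus non-overlapping then forces $a=0$, i.e.\ $p_2\in P$. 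You instead fix the rod $p_0q_0$, integrate the constraints into the finite identity $A_1s_2(u)=A_2s_1(u)$ on an interval of the half-angle $u$, and extract $A_1=A_2=0$ from the linear independence of $\cos u$ and the radicals $\sqrt{r_k^2-r^2\sin^2 u}$, concluding via Lemma~\ref{4kd}. I checked your case analysis and it does close: in the generic case ($r_1,r_2,r$ pairwise distinct) the squared identity would force $g_1g_2$ to be a polynomial in $\sin^2u$, giving $r_1=r_2$; the degenerate cases $r_k=r$ and $r_1=r_2$ lead to $q_k=q_0$ or $q_1=q_2$ as you say. The trade-off: the paper's first-order argument is shorter (one determinant, no branch analysis) and, being purely a computation in adapted coordinates, transfers almost verbatim to the hyperbolic and spherical settings in \S\ref{sect.Loba}; your version avoids the determinant, stays closer to the orthogonality lemma used elsewhere in the paper, and yields the intermediate geometric fact $p_0p_2\perp Q$ explicitly. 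Two small points to tighten if you write it out in full: justify that the branch signs in $s_k(u)=r\cos u\pm\sqrt{r_k^2-r^2\sin^2u}$ are locally constant along the flex (so that the identity can be compared term by term on a subinterval), and note that after restricting to a small time interval the framework stays non-overlapping, which is what forbids $s_k\equiv0$ and $s_1\equiv s_2$.
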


Since flexible frameworks are infinitesimally flexible, this
lemma follows from Whiteley Theorem%
\footnote{It was essentially used in \cite{\refMaTo} in the proof of Theorem~\ref{t1} for
$m\ge 3$ and $n\ge 5$.}
\cite{\refWh} according to which
{\sl a non-overlapping $(m,n)$-framework with $\min(m,n)\ge 3$ is infinitesimally
flexible if and only if either all joints lie on a second order curve, or
all joints of one part and at least one joint of the other part are collinear}
(for $m=n=3$, the second condition is a particular case of the first one).
However, since we are giving a self-contained proof of Theorem~\ref{t1}, let us prove
Lemma~\ref{collin} directly.

\begin{proof}
Denote the rod lengths by $r_i=|p_0q_i|=|p_1q_i|$, $R_i=|p_2q_i|$,
$i=0,1,2$.
Consider a continuous deformation ${\bf p}(t)$.
The equidistance condition implies that the points $q_j$ rest collinear
and $q_0q_1\perp p_0p_1$ during the deformation.
Hence, without loss of generality, we may assume that the $q_j$'s remain
on the axis $y=0$, whereas $p_0$ and $p_1$ remain on the axis $x=0$.
Set $q_i=(x_i,0)$, $i=0,1,2$,
and denote the $x$-coordinate of $p_2$ by $a$. Then
\begin{equation}\label{eq.collin}
   r_0^2 - x_0^2 = r_j^2 - x_j^2, \qquad
   R_0^2 - (x_0-a)^2 = R_j^2 - (x_j-a)^2, \qquad j=1,2.
\end{equation}
Differentiating these identities with respect to $t$, we obtain a system of four
linear homogeneous equations for $x'_0,x'_1,x'_2,a'$.
The determinant is $a(x_0-x_1)(x_0-x_2)(x_1-x_2)$.
The flexibility implies the existence of a non-zero solution, thus $a=0$.
\end{proof}


\section{General case: $F_1$ and $F_2$ are proportional}\label{sect.gen}

Let the notation be as in \S\ref{main.idea}. Suppose that
$M$ contains a flexible non-overlapping $(3,3)$-framework
${\bf p}=(p_0,p_1,p_2;\,q_0,q_1,q_2)$.

\begin{lemma}\label{lem.propor}
If $F_1=\lambda F_2$ for some number $\lambda$, then ${\bf p}$
is a Dixon mechanism of the first kind.
\end{lemma}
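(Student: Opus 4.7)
The plan is to reduce the lemma to Lemma~\ref{collin} by showing that the hypothesis $F_1=\lambda F_2$ forces $|p_1q_j|=|p_2q_j|$ for $j=0,1,2$; that lemma, applied after the trivial relabeling which puts $(p_1,p_2)$ in the role of $(p_0,p_1)$, then yields the Dixon mechanism of the first kind.

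The first step is to make the resultant (\ref{F[i]}) explicit. The clean observation is that in $f_{ij}(T_i,t_j)$ the coefficient of $T_i^2$ equals $r^2t_j-r_j^2$, which is independent of $i$, so the whole $i$-dependence of $f_{ij}$ is carried by the single scalar $\mu_{ij}=R_i^2-r_{ij}^2$. Explicitly, the middle coefficient (in $T_i$) is $(1-t_j)(r^2t_j-r_j^2)+\mu_{ij}t_j$ and the constant coefficient is $R_i^2 t_j(1-t_j)$. Substituting this into the standard formula $\mathrm{Res}(aT^2+bT+c,\tilde aT^2+\tilde bT+\tilde c)=(a\tilde c-\tilde ac)^2-(a\tilde b-\tilde ab)(b\tilde c-\tilde bc)$ produces the clean decomposition
\[
   F_i=R_i^2P^2-L_iM_i,
\]
in which $P\in\mathbb{R}[t_1,t_2]$ depends only on $r,r_1,r_2$, while $L_i,M_i$ are affine in $(\mu_{i1},\mu_{i2})$ with coefficients in $\mathbb{R}[t_1,t_2]$ that do not depend on $i$.

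Expanding $L_iM_i$ in powers of $\mu_{i1},\mu_{i2}$ then gives
\[
   F_i=R_i^2P^2-L_0M_0-\mu_{i1}U-\mu_{i2}V-\mu_{i1}\mu_{i2}W+\mu_{i1}^2X+\mu_{i2}^2Y,
\]
where the seven polynomials $P^2,L_0M_0,U,V,W,X,Y\in\mathbb{R}[t_1,t_2]$ depend only on the fixed parameters $r,r_1,r_2$. Accordingly, the equation $F_1-\lambda F_2=0$ amounts to a single $\mathbb{R}$-linear relation among these seven polynomials, with scalar coefficients $R_1^2-\lambda R_2^2$, $\lambda-1$, $\lambda\mu_{21}-\mu_{11}$, $\lambda\mu_{22}-\mu_{12}$, $\lambda\mu_{21}\mu_{22}-\mu_{11}\mu_{12}$, $\mu_{11}^2-\lambda\mu_{21}^2$, $\mu_{12}^2-\lambda\mu_{22}^2$.

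The main (and only delicate) step is to verify that these seven polynomials are $\mathbb{R}$-linearly independent for non-degenerate $r,r_1,r_2$; I expect this to be a routine bidegree/leading-coefficient computation, with the degenerate parameter values (collinearities, coincidences among $r, r_1, r_2$) dispatched separately. Granting this, all seven scalar coefficients vanish: the coefficient of $L_0M_0$ gives $\lambda=1$; the coefficients of $U$ and $V$ then give $\mu_{11}=\mu_{21}$ and $\mu_{12}=\mu_{22}$; and the coefficient of $P^2$ gives $R_1^2=R_2^2$. Combined with $\mu_{ij}=R_i^2-r_{ij}^2$, this yields $R_1=R_2$ and $r_{1j}=r_{2j}$ for $j=1,2$. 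Equivalently, the joints $p_1$ and $p_2$ are equidistant from each of $q_0,q_1,q_2$, so Lemma~\ref{collin} (applied with $(p_1,p_2)$ in place of $(p_0,p_1)$) concludes that $\mathbf{p}$ is a Dixon mechanism of the first kind.
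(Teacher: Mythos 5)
Your argument has a fatal gap at the step you yourself flag as the delicate one: the seven polynomials $P^2,L_0M_0,U,V,W,X,Y$ are \emph{not} linearly independent, and the conclusion you draw from that independence ($\lambda=1$, hence $R_1=R_2$ and $r_{1j}=r_{2j}$) is false for the very mechanisms the lemma is about. A generic Dixon mechanism of the first kind has $p_1,p_2$ at different distances from $q_0$ (they are distinct points of the line $P$), yet it is flexible, so $C_1=C_2$ and indeed $F_1=\lambda F_2$ with $\lambda=(R_1^2-r^2)/(R_2^2-r^2)\ne 1$ (this is recorded in the paper's footnote to Lemma~\ref{lem.F[1].square}: $F_i=(R_i^2-r^2)F^2$ with the same $F$). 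Since such configurations exist for essentially arbitrary $r,r_1,r_2$, each one furnishes a nontrivial linear relation among your seven polynomials, so the independence cannot hold for any nondegenerate choice of the fixed parameters. Your structural decomposition $F_i=R_i^2P^2-L_iM_i$ is correct (modulo the small slip that the constant coefficient of $f_{ij}$ in $T_i$ is $R_i^2t_j(1-t_j)$, so the $i$-dependence is not carried by $\mu_{ij}$ alone), but the reduction to ``$p_1$ and $p_2$ equidistant from all $q_j$'' cannot be the whole story.

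The paper's proof instead equates the coefficients $c_{kl}$ of $F_1-\lambda F_2$ one by one and splits into four cases according to the values of $\lambda$ and $R_2$. Only the degenerate cases $\lambda=0$, $\lambda=1$, and $R_2=r$ reduce to an equidistance statement and Lemma~\ref{collin}; in the generic case ($\lambda(1-\lambda)\ne0$, $R_2\ne r$) one solves $c_{04}=0$ for $\lambda$ and then derives the full system of length relations $r^2+r_{kl}^2=r_k^2+R_l^2$ characterizing Dixon-1 via Proposition~\ref{dix.equiv}(a) --- no pair of joints is equidistant from the opposite part there. To salvage your approach you would have to determine the actual module of linear relations among your seven polynomials and analyze which combinations of vanishing coefficients are compatible with them, which amounts to redoing the paper's case analysis.
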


\begin{proof}
Set $F=F_1-\lambda F_2$. This is a polynomial of the form
$\sum_{k,l=0}^4 c_{kl}\,t_1^k t_2^l$
where $c_{kl}$ are polynomials in $r_{ij}^2$ and we have
$c_{00}=c_{01}=c_{43}=c_{44}=0$.
 By hypothesis $c_{kl}$ must vanish.
 There is a symmetry $c_{4-k,4-l}=r_1^{2k-4}r_2^{2l-4}c_{k,l}$,
 hence only 11 of these 21 equations are distinct.
A computation shows that
$$
   c_{04}=r_1^4(R_1^2-r^2-\lambda R_2^2+\lambda r^2),
$$
$$
   c_{20}=r_2^4\big(r_1^2(\lambda-1)-\lambda r_{21}^2+r_{11}^2\big),\qquad
   c_{02}=r_1^4\big(r_2^2(\lambda-1)-\lambda r_{22}^2+r_{12}^2\big).
$$

{\it Case 1.} $\lambda=0$.
Then the equations $c_{04}=c_{20}=c_{02}=0$ yield $R_1=r$, $r_1=r_{11}$, and $r_2=r_{12}$,
hence the joints $p_0$ and $p_1$ are equidistant from all the $q_j$ and the
result follows from Lemma~\ref{collin}.

\smallskip
{\it Case 2.} $\lambda=1$.
Then the equations $c_{04}=c_{20}=c_{02}=0$ yield
$R_1=R_2$, $r_{11}=r_{21}$, and $r_{12}=r_{22}$,
hence the joints $p_1$ and $p_2$ are equidistant from all the $q_j$ and again the
result follows from Lemma~\ref{collin}.

\smallskip
{\it Case 3.} $R_2=r$ and $\lambda(1-\lambda)\ne 0$.
Then the equation $c_{04}=0$ implies $R_1=r$. Find $r_{11}^2$ and $r_{12}^2$
from the equations $c_{20}=c_{02}=0$ and plug the result into $c_{12}+c_{13}=0$.
We obtain the equation
$$
    \lambda(\lambda-1)r_1^2(r_{22}^2-r_2^2)^2=0,
$$
whence $r_{22}=r_2$, and the equation $c_{21}=0$ takes the form
$$
     \lambda(\lambda-1)r_2^2(r_{21}^2-r_1^2)^2=0.
$$
Thus $R_2=r$, $r_{21}=r_1$ and $r_{22}=r_2$,
hence the joints $p_0$ and $p_2$ are equidistant from all the $q_j$ and once again the
result follows from Lemma~\ref{collin}.

\smallskip
{\it Case 4.} $R_2\ne r$ and $\lambda(1-\lambda)\ne 0$.
From $c_{04}=0$ we find
$\lambda=(R_1^2-r^2)/(R_2^2-r^2)$.
Then the conditions $\lambda\ne 0$ and $\lambda\ne 1$
imply that $R_1\ne r$ and $R_1\ne R_2$.

Find $r_{11}^2$ and $r_{12}^2$ from
the equations $c_{20}=0$ and $c_{02}=0$ respectively and substitute the result
(and the found expression for $\lambda$)
in the equations $c_{12}+c_{13}=0$ and $c_{21}=0$. We obtain, respectively,
$\mu r_1^2(A+B)^2=0$ and $\mu r_2^2 AB=0$ where
$$
  \mu=r_1^2(R_1^2-R_2^2)(R_1^2-r^2)(R_2^2-r^2)^{-2},
$$
$$
     A = r^2   + r_{21}^2 - r_1^2 - R_2^2,    \quad
     B = r_1^2 + r_{22}^2 - r_2^2 - r_{21}^2, \quad
   A+B = r^2   + r_{22}^2 - r_2^2 - R_2^2.
$$
Since $\mu\ne0$, we have $AB=A+B=0$ whence $A=B=0$.
Put the expression for $a$ into
$c_{20}$ and $c_{02}$, and then replace
$R_2^2=r^2+r_{21}^2-r_1^2$ (in $c_{20}$) and $R_2^2=r^2+r_{22}^2-r_2^2$ (in $c_{02}$).
We obtain, respectively, $r^2+r_{11}^2=r_1^2+R_1^2$ and
$r^2+r_{12}^2=r_2^2+R_1^2$. These conditions together with $A=0$ and $B=0$
span all the conditions on the rod lengths in Proposition~\ref{dix.equiv}(a).
\end{proof}


\section{Complexification and compactification of the considered curves}
\label{sect.complex}

Instead of the affine coordinates $T_i$ and $t_j$
 (see \S\ref{main.idea}),
it will be more convenient for us to use the projective (homogeneous) coordinates
$(S_i:T_i)$ and $(s_j:t_j)$ running over the circles
$\{T_i{\overline T}_i=R_i^2 S_i{\overline S}_i\}$ and
$\{t_j{\overline t}_j=r_j^2 s_j{\overline s}_j\}$
in the complex projective line ${\mathbb{CP}}^1$.

In this and the next sections, $P_i$ and $Q_j$ will denote
copies of ${\mathbb{CP}}^1$ endowed with the respective coordinates.
Accordingly, $M$, $M_i$, and $C_i$ will denote the compactifications of the
complexifications of the respective algebraic sets introduced in \S\ref{main.idea}.
Namely,
$M=\{\hat f_{11}=\dots=\hat f_{22}=0\}\subset P_1\times P_2\times Q$,
$M_i=\{\hat f_{i1}=\hat f_{i2}=0\}\subset P_i\times Q$, and
$C_i=\{\hat F_i=0\}\subset Q$ where
$Q=Q_1\times Q_2={\mathbb{CP}}^1\times{\mathbb{CP}}^1$,
$$
   \hat f_{ij}(S_i,T_i;\,s_j,t_j) = S_i^2 s_j^2 f(T_i/S_i, t_j/s_j), \qquad
   \hat F_i(s_1,t_1;\,s_2,t_2) = s_1^4 s_2^4 F(t_1/s_1, t_2/s_2).
$$
We also define the curves $C_{ij} = \{\hat f_{ij} = 0\}\subset P_i\times Q_j$.

Despite the fact that we have extended $M$, we still reserve the term
$(3,3)$-framework for ``true $(3,3)$-frameworks'' only, i.e., for the elements of $M$
all whose coordinates belong to the circles
$\{T_i{\overline T}_i=R_i^2 S_i{\overline S}_i\}$ and
$\{t_j{\overline t}_j=r_j^2 s_j{\overline s}_j\}$; we denote
the set of them (i.e., ``the old $M$'') by ${\mathbb R}M$. This is the fixed point
set of the antiholomorphic involution which acts on each factor $P_i$, $Q_j$ as
\begin{equation}\label{eq.conj}
   (T_i:S_i)\mapsto (R_i^2{\overline S}_i : {\overline T}_i), \qquad
   (t_j:s_j)\mapsto (r_j^2{\overline s}_j : {\overline t}_j).
\end{equation}


\section{Consequences of the reducibility of $\hat f_{ij}$ and $\hat F_i$.}
\label{sect.reduc}

Introduce the notation as in \S\ref{sect.complex}. Assume that
$M$ contains a flexible non-overlapping $(3,3)$-framework ${\bf p}$.
In this section we find necessary conditions for the reducibility of $\hat F_1$.
Let us simplify the notation: $T=T_1$, $S=S_1$, $R=R_1$,
$$
    a_j=r_{1j}, \qquad A_0^\pm=R\pm r,\qquad A_j^\pm = a_j\pm r_j, \qquad j=1,2,
$$
(i.e., $A_j^\pm=r_{1j}\pm r_{0j}$, $j=0,1,2$).
Set $D_j=\text{Discr\,}_{t_j}(f_{1j})$. 
A computation shows that
\begin{equation}\label{dd}
   D_j = d_j^+ d_j^-, \qquad
   d_j^\pm = T^2 + \big(R^2 + r^2 - (A_j^\pm)^2\big)T + R^2,\qquad j=1,2,
\end{equation}
and for $\Delta_j^\pm = \text{Discr\,}_T(d_j^\pm)$ we have
\begin{equation}\label{Delta}
   \Delta_j^\pm = (A_j^\pm + A_0^+)(A_j^\pm - A_0^+)(A_j^\pm + A_0^-)(A_j^\pm - A_0^-).
\end{equation}
It follows from Lemma~\ref{nuiz} that
\begin{equation}\label{ineq}
       A_j^+ \pm A_k^- \ne 0, \qquad A_j^+ + A_k^+ \ne 0, \qquad j,k=0,1,2.
\end{equation}

\begin{lemma}\label{lem.bb} {\rm(Proof is obvious.)}
   If two polynomials $T^2+b_k T + R^2$, $k=1,2$, have a common root, then they coincide.
\end{lemma}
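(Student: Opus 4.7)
The plan is to exploit the fact that both polynomials are monic quadratics in $T$ with the \emph{same} constant term $R^2$, so Vieta's formulas pin down their root sets very tightly. Since $R$ is a rod length (the radius of the circle $P_1$), we have $R \ne 0$, hence $R^2 \ne 0$; this is the only nontriviality worth keeping track of.

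Concretely, I would argue as follows. Let $\alpha$ be a common root. By Vieta, the product of the two roots of $T^2 + b_k T + R^2$ equals $R^2$ for each $k=1,2$. Since $R^2 \ne 0$, any root is nonzero, and in particular $\alpha \ne 0$; moreover the second root of the $k$-th polynomial is forced to be $R^2/\alpha$ for both $k=1$ and $k=2$. Thus the two monic polynomials have identical root multisets, and therefore they coincide.

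An equivalent one-line variant, which might be even cleaner to write in the paper, is: subtract the two polynomials to obtain $(b_1 - b_2)T$, which must vanish at the common root $\alpha$; since $\alpha \ne 0$ (as just noted), this forces $b_1 = b_2$, and then the two original polynomials are manifestly equal.

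There is no real obstacle: the statement is elementary and the authors themselves flag it as obvious. The only thing to make sure of is that the hypothesis $R \ne 0$ is in force, which is automatic in the setting of the paper since $R = R_1$ is a positive real number throughout \S\ref{main.idea}--\S\ref{sect.reduc}.
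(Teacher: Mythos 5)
Your argument is correct and is precisely the "obvious" proof the authors had in mind (the paper gives no written proof, only the remark that it is obvious): the common root is nonzero because the product of the roots is $R^2\ne 0$, so subtracting the two polynomials forces $b_1=b_2$. Nothing further is needed.
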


Recall that {\it deltoid} is a
4-cycle symmetric with respect to one of its diagonals, which we call in this case
the {\it axis} of the deltoid.

\begin{lemma}\label{lem.f[i,j]}
The polynomial $\hat f_{1j}$, $j=1,2$, is reducible over $\mathbb{C}$ if and only if
the 4-cycle $p_0q_0p_1q_j$ either is parallelogrammatic or it is a deltoid.%
\footnote{This statement is similar but not equivalent to \cite[Lemma 4]{\refKov}.}
\end{lemma}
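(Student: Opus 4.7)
The plan is to write $\hat f_{1j}$ explicitly as a quadratic form in $(s_j:t_j)$, namely
$$
\hat f_{1j}=A(S,T)\,t_j^2+B(S,T)\,t_j s_j+C(S,T)\,s_j^2,
$$
with $A=-S(r^2T+R^2S)$, $C=-r_j^2T(T+S)$ and $B=r^2T^2+(r^2+R^2+r_j^2-a_j^2)TS+R^2S^2$, and then to enumerate the combinatorial types of factorization of a bidegree-$(2,2)$ form. A short direct expansion verifies the announced identity $B^2-4AC=D_j=d_j^+ d_j^-$ of (\ref{dd}) and likewise (\ref{Delta}). A nontrivial factorization of $\hat f_{1j}$ must contain either a factor of bidegree $(1,0)$, a factor of bidegree $(0,1)$, or split as a pair of factors of bidegree $(1,1)$; I would treat these three possibilities in turn.

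A preparatory observation is that $d_j^+$ and $d_j^-$ are always coprime: as monic polynomials in $T$ with common constant term $R^2$, any common root would have to be $T=0$, forcing $R=0$. In the $(1,1)+(1,1)$ case, writing $\hat f_{1j}=(\alpha_1 t_j+\beta_1 s_j)(\alpha_2 t_j+\beta_2 s_j)$ with linear $\alpha_i,\beta_i$ in $(S,T)$ yields $D_j=(\alpha_1\beta_2-\alpha_2\beta_1)^2$, a perfect square; coprimality of $d_j^\pm$ then forces each of them to be a square individually, i.e.\ $\Delta_j^+=\Delta_j^-=0$. Invoking (\ref{Delta}) and ruling out the forbidden vanishings with (\ref{ineq}) leaves only the pair $A_j^+=A_0^+$ together with $A_j^-=\pm A_0^-$; after arithmetic this produces either $(a_j,r_j)=(r,R)$ (the parallelogrammatic configuration) or $(a_j,r_j)=(R,r)$ (the deltoid with axis $p_0p_1$). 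For a bidegree-$(1,0)$ factor $\alpha S+\beta T$, the forms $A,B,C$ must share a common zero in $(S:T)$; the explicit factorization of $A$ leaves only the candidate $(S:T)=(r^2:-R^2)$ (the alternative $S=0$ is killed by $B(0,T)=r^2T^2$), and imposing $B=C=0$ there together with (\ref{ineq}) produces $R=r$ and $a_j=r_j$, the deltoid with axis $q_0q_j$. The bidegree-$(0,1)$ case is handled by the same argument applied to $\hat f_{1j}$ viewed as a quadratic in $(S:T)$, and yields the deltoid with axis $p_0p_1$.

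For the converse one simply exhibits, in each geometric case, an explicit factorization: for the deltoid with axis $q_0q_j$ one has $R=r$ and $a_j=r_j$, which makes $T+S$ divide each of $A,B,C$ (producing an explicit $(1,0)$-component corresponding to the divisor $\{p_1=p_0\}\subset C_{1j}$), and for the parallelogrammatic and deltoid-with-axis-$p_0p_1$ cases the identities $\Delta_j^+=\Delta_j^-=0$ turn $D_j$ into an honest square so that the quadratic formula, combined with the explicit factorizations of $A$ and $C$ as products of linear forms in $(S,T)$, extracts two $(1,1)$-factors. The main obstacle I anticipate is the bookkeeping: one must carefully use (\ref{ineq}) to eliminate the spurious roots among the eight linear factors appearing in the product expansion (\ref{Delta}) of $\Delta_j^+\Delta_j^-$, so that the algebraic conditions reduce \emph{exactly} to the three geometric configurations listed in the lemma; the computations themselves are short and explicit.
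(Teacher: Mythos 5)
Your proposal is correct and follows essentially the same route as the paper: a case analysis on the bidegrees of the factors in which a factor involving only one group of variables forces a common zero of the coefficient forms (yielding $R=r$, $a_j=r_j$, the deltoid with axis $q_0q_j$), while a genuine $(1,1)+(1,1)$ splitting forces $D_j=d_j^+d_j^-$ to be a perfect square, hence $\Delta_j^+=\Delta_j^-=0$, after which (\ref{ineq}) prunes the solutions of (\ref{Delta}) down to the parallelogrammatic cycle and the deltoid with axis $p_0p_1$ --- exactly the paper's two cases, merely reorganized. The only point to tighten is your coprimality claim for $d_j^\pm$, which tacitly needs $d_j^+-d_j^-=\pm4a_jr_jT\not\equiv0$ before ``a common root must be $T=0$'' is valid (this is the paper's Lemma~\ref{lem.bb} combined with that identity); otherwise the arguments match.
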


\begin{proof} The reducibility in the deltoid case is evident.
For a parallelogrammatic cycle which is not a deltoid, it is also easily seen: 
the irreducible components correspond to parallelograms and antiparallelograms.
Let us prove that there are no other cases of reducibility.

Let $\hat f_{1j}$ be reducible.
Consider firstly the case when $\hat f_{1j}$ has a non-constant divisor
$\hat f_0$ of degree zero in $t_j$. Write
$\hat f_{1j}=c_2t_j^2+c_1s_jt_j+c_0s_j^2$.
Then $\hat f_0$ divides all the coefficients $c_k(S,T)$. We have
$c_0=r^2_jT(S-T)$ and $c_2=S(r^2T-R^2S)$. Hence $R=r$ and $\hat f_0=S-T$,
i.e., the polynomial $c_1$ must vanish identically in $T$ after the substitution
$R=r$, $S=T$. Performing this substitution, we obtain $c_1=(r_j^2-a_j^2)T^2$.
Hence $R=r$ and $r_j=a_j$, which corresponds to a deltoid.

Now consider the case when $\hat f_{1j}$ does not have non-constant divisors
of degree zero in $t_j$. Then $\hat f_{1j}=\hat f_1\hat f_2$,
$\deg_{t_j}\hat f_k=1$, $k=1,2$.
In this case, the discriminant $D_j$ must be a complete square.
We have $d_j^+-d_j^-=4a_jr_jT$ (see (\ref{dd})), hence
$d_j^+$ and $d_j^-$ do not coincide. This fact combined with Lemma~\ref{lem.bb} implies that $d_j^\pm$ are also complete squares, i.e., $\Delta_j^+=\Delta_j^-=0$.
Then, due to (\ref{Delta}) and (\ref{ineq}),
\begin{equation}\label{eq.f[i,j]}
   A_j^+ - A_0^+ = A_j^- - A_0^-=0
  \quad\text{ or }\quad
   A_j^+ - A_0^+ = A_j^- + A_0^-=0.
\end{equation}
Solving these systems of equations, we obtain either
$a_j=R$ and $r_j=r$ (deltoid), or
$a_j=r$ and $r_j=R$ (parallelogram).
The lemma is proven.
\end{proof}

\begin{lemma}\label{lem.transv}
  Suppose that $\hat f_{11}$ and $\hat f_{12}$ are irreducible. Then:

\begin{enumerate}
\item[(a)]
  the projection of $M_1$ to each of the factors $P_1$, $Q_1$, or $Q_2$ is finite
  (i.e., the preimage of each point is finite), and hence
  $M_1$ is an algebraic curve;
\item[(b)]
  the surfaces $\{\hat f_{1j}=0\}\subset P_1\times Q$, $j=1,2$,
  cross transversally everywhere except, maybe, a finite number of points.
\end{enumerate}
\end{lemma}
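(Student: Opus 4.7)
The plan is to exploit the fact that each $\hat f_{1j}$ is bihomogeneous of bidegree $(2,2)$ in $(S_1,T_1;\,s_j,t_j)$, together with the assumed irreducibility, to conclude that the curves $C_{1j}\subset P_1\times Q_j$ are irreducible of bidegree $(2,2)$ with finite degree-$2$ projections onto both factors. Once this is established, the two statements (a) and (b) reduce to a dimension count and a short gradient analysis.

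For (a), the key observation is that both projections $C_{1j}\to P_1$ and $C_{1j}\to Q_j$ have finite fibers \emph{over every point}, not merely generically: if a whole fiber $\{p_1\}\times Q_j$ (or $P_1\times\{q_j\}$) were contained in $C_{1j}$, the linear form in $(S_1,T_1)$ (resp.\ $(s_j,t_j)$) cutting it out would divide $\hat f_{1j}$, contradicting irreducibility. Therefore the fiber of $M_1\to P_1$ over any $p_1$ is a cartesian product of two finite sets, hence finite; the fiber of $M_1\to Q_k$ over $q_k$ is obtained by first taking the finite preimage of $q_k$ under $C_{1k}\to Q_k$ and then the finite preimages of each resulting $p_1$ under the other $C_{1j'}\to P_1$. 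Since $M_1$ has a projection to $P_1$ with finite fibers, it is at most $1$-dimensional, while being the intersection of two hypersurfaces in the threefold $P_1\times Q$ forces every component to have dimension at least $1$; thus $M_1$ is a pure one-dimensional algebraic curve.

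For (b), the two surfaces $\{\hat f_{1j}=0\}\subset P_1\times Q$ fail to meet transversally at $x=(p_1,q_1,q_2)$ exactly when the two gradients are linearly dependent at $x$. Because $\hat f_{11}$ is independent of $(s_2,t_2)$ and $\hat f_{12}$ is independent of $(s_1,t_1)$, linear dependence forces either (i) one of the gradients to vanish, so that $(p_1,q_1)$ is a singular point of $C_{11}$ or $(p_1,q_2)$ is a singular point of $C_{12}$, or (ii) the $(s_j,t_j)$-blocks of both gradients to vanish, i.e.\ $(p_1,q_j)$ is a ramification point of $C_{1j}\to P_1$ for both $j=1,2$. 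Each of (i) and (ii) is a finite condition: an irreducible projective curve in $\mathbb{CP}^1\times\mathbb{CP}^1$ has finitely many singular points, and a finite degree-$2$ morphism has finitely many ramification points. Combined with the finite-fiber statement in (a), this yields only finitely many points of $M_1$ where transversality can fail. The only delicate step is ruling out divisors of $\hat f_{1j}$ that depend on only one pair of variables (which would create positive-dimensional fibers or a whole singular curve in $M_1$); this is precisely where the irreducibility hypothesis is used, and once it is exploited everything else is bookkeeping.
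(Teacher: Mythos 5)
Your proof is correct and takes essentially the same route as the paper's: part (a) rests on the observation that irreducibility of $\hat f_{1j}$ forbids $C_{1j}$ from containing a fiber of either coordinate projection, hence all projections of $M_1$ have finite fibers, and part (b) on the block structure $(a,b,0)$, $(c,0,d)$ of the two gradients combined with the finiteness from (a). Your added dimension count for ``$M_1$ is a curve'' and the finer case split in (b) are harmless refinements of the paper's argument.
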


\begin{proof}

\smallskip
(a). 
Denote with $\text{pr}_j:P_1\times Q\to P_1\times Q_j$, $j=1,2$, the standard
projections. If $\text{pr}_1^{-1}(p,q) \subset M_1$, then
$\{p\}\times Q_2\subset\text{pr}_2(M_1)=C_{12}$, which contradicts the irreducibility
of $\hat f_{12}$.
Hence the projection of $M_1$ to $P_1\times Q_1$ is finite.
In the same way we prove the finiteness of the projections of
$M_1$ to $P_1\times Q_2$ and $Q$.
The finiteness of the projection of $C_{1j}$ (and hence of $M_1$) to $P_1$ and $Q_j$
is immediate from the irreducibility of $\hat f_{1j}$.

\smallskip
(b). Consider the affine chart $(T,t_1,t_2)$ on $P_1\times Q$ (the arguments
for the other affine charts are the same). In this chart, $M_1$ is defined
by the equations
$f_{11}=f_{12}=0$. The gradients have the form $\nabla f_{11}=(a,b,0)$,
$\nabla f_{12}=(c,0,d)$. If such vectors are proportional, then $b=0$ or $d=0$,
which means that one of the partial derivatives $\partial f_{1j}/\partial t_j$
is equal to zero. This may happen only on a finite number of lines of the form
$T,t_j=\text{const}$.
Due to (a), each such line crosses $M_1$ at a finite
number of points, which completes the proof.
\end{proof}

\begin{lemma}\label{lem.F[1]}
If $\hat f_{11}$ and $\hat f_{12}$ are irreducible and
$\hat F_1$ is a non-zero reducible polynomial which is not a power
of an irreducible polynomial, then the 4-cycle
$p_0q_1p_1q_2$ either is parallelogrammatic, or it is a deltoid with axis $p_0p_1$.
\end{lemma}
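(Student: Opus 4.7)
The plan is to translate the hypothesis on $\hat F_1$ into reducibility of the curve $M_1\subset P_1\times Q$, and then to analyze when the two partial double covers $C_{11}\to P_1$ and $C_{12}\to P_1$ can be ``matched'' in the fiber product $M_1=C_{11}\times_{P_1}C_{12}$.

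First, since $\tau\colon M_1\to C_1$ is finite and surjective by Lemma~\ref{lem.transv}(a), if $M_1$ were irreducible then $C_1$ would be irreducible and $\hat F_1$ would be a power of an irreducible polynomial; the hypothesis therefore forces $M_1$ to be reducible. By the irreducibility of $\hat f_{1j}$, each $C_{1j}\to P_1$ is an irreducible double cover, whose function field is $\mathbb{C}(T)(\sqrt{D_j})$ with $D_j=d_j^+d_j^-$ the discriminant of $\hat f_{1j}$ in $t_j$ computed in~(\ref{dd}). The fiber product $M_1$ is reducible exactly when these two double covers are $P_1$-isomorphic, i.e.\ when $D_1D_2\in\mathbb{C}[T]$ is a perfect square.

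Second, we analyse this squareness condition. The four factors $d_j^\pm$ are monic quadratics in $T$ sharing the constant term $R^2$, so by Lemma~\ref{lem.bb} any two with a common root must coincide; also $d_j^+\ne d_j^-$ since $d_j^+-d_j^-=4a_jr_jT\not\equiv0$. By the proof of Lemma~\ref{lem.f[i,j]}, the irreducibility hypothesis on $\hat f_{1j}$ forbids $\Delta_j^+=\Delta_j^-=0$ simultaneously for the same $j$. Splitting into cases according to how many of the four discriminants $\Delta_j^\pm$ vanish and using (\ref{ineq}) to kill every forbidden pairing of linear factors, one finds that the two surviving configurations are: \emph{(i)} $d_1^+=d_2^+$ and $d_1^-=d_2^-$, whence $A_1^+=A_2^+$ and $A_1^-=A_2^-$, i.e.\ $r_{11}=r_{12}$ and $r_{01}=r_{02}$, so that $p_0q_1p_1q_2$ is a deltoid with axis $p_0p_1$; and \emph{(ii)} a ``mixed'' pairing, arising when exactly two $\Delta_j^\pm$ vanish (one for each $j$), forcing $A_1^+=A_2^+$ together with $A_1^-=-A_2^-$ (or a symmetric identity), which gives $r_{11}=r_{02}$ and $r_{12}=r_{01}$, so that $p_0q_1p_1q_2$ is parallelogrammatic. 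The ``swap'' $d_1^+=d_2^-$, $d_1^-=d_2^+$ would force $A_1^+=\pm A_2^-$, which is excluded directly by (\ref{ineq}).

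The main obstacle is the bookkeeping of the degenerate cases $\Delta_j^\pm=0$ (which, by (\ref{Delta}), correspond to coincidences $A_j^\pm=\pm A_0^\pm$): one has to verify that the only ``mixed'' pairing compatible with (\ref{ineq}) is the one leading to the parallelogrammatic conclusion, while every other degenerate sub-configuration either collapses to the deltoid case or violates the triangle-inequality-like prohibitions encoded in Lemma~\ref{nuiz}.
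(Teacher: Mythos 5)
Your proposal follows essentially the same route as the paper: reduce the hypothesis to reducibility of $M_1$, deduce that $D_1D_2$ is a perfect square, and then classify the possible coincidences among $d_1^\pm,d_2^\pm$ using Lemma~\ref{lem.bb} and (\ref{ineq}). Your justification of the squareness of $D_1D_2$ via the function fields $\mathbb{C}(T)(\sqrt{D_j})$ of the two irreducible double covers and the fiber-product description $M_1=C_{11}\times_{P_1}C_{12}$ is a valid (and arguably cleaner) substitute for the paper's argument, which instead inverts the bijections $M_1'\to C_{1j}$ and compares branch points of $\pi_1$ and $\pi_2$; note that Lemma~\ref{lem.transv}(a) is still needed to exclude components of the fiber product not dominating $P_1$, and you do invoke it. One correction to your bookkeeping: in your configuration \emph{(i)}, the identity $d_1^-\equiv d_2^-$ only yields $(A_2^-+A_1^-)(A_2^--A_1^-)=0$, and (\ref{ineq}) does \emph{not} exclude $A_1^-+A_2^-=0$ (it only forbids $A_j^++A_k^+=0$ and $A_j^+\pm A_k^-=0$); hence case \emph{(i)} can equally well produce the parallelogrammatic outcome, exactly as in the paper's system (\ref{eqsA}), and similarly the ``mixed'' cases can produce the deltoid. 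This does not affect the lemma, since both outcomes appear in its disjunctive conclusion, but the clean dichotomy ``(i) $\Rightarrow$ deltoid, (ii) $\Rightarrow$ parallelogram'' that you assert is not correct, and the remaining case analysis you defer to the reader is precisely the paper's Cases 1--3.
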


\begin{proof}
Recall that $C_{1j} = \{\hat f_{1j} = 0\}\subset P_1\times Q_j$.
Let $\tilde\pi_j:M_1\to C_{1j}$ and $\pi_j:C_{1j}\to P_1$ be
the standard projections $P_1\times Q\to P_1\times Q_j\to P_1$
restricted to the respective curves.
By hypothesis, the image of $M_1$ under the projection $P_1\times Q\to Q$ is
the reducible curve $C_1=\{\hat F_1=0\}$, hence the curve $M_1$ is reducible as well.
Let $M'_1$ and $M''_1$ be two distinct irreducible components of $M_1$.
By Lemma~\ref{lem.transv}, none of them can be contracted to a point by
the projections $\tilde\pi_j$.
Therefore, since these projections are two-fold
(recall that the degree of $f_{ij}$ in each variable is 2),
their restrictions to each component of $M_1$ are bijective.
Hence the composition
$$
    \eta=\pi_2\circ\tilde\pi_2\circ\big(\tilde\pi_1|_{M'_1}\big)^{-1}:
       C_{11}\to M'_1\to C_{12}\to P_1
$$
has the same branching points (the critical values) as $\pi_2$.
Since $\eta=\pi_1$, we conclude that $\pi_1$ and
$\pi_2$ have the same branching points.

The branching points of $\pi_j$ are the odd multiplicity zeros
of $D_j$ (see (\ref{dd})), hence $D_1D_2$ is a complete square.
Since $\pi_j$ is a two-fold projection of the irreducible curve $C_{1j}$,
each of $\pi_1$, $\pi_2$ has branching points. Then
$D_1$ and $D_2$ have a common root. Hence, by Lemma~\ref{lem.bb},
one of $d_1^\pm$ coincides with one of $d_2^\pm$. Note that
$d_1^+ - d_2^- = (A_2^- + A_1^+)(A_2^- - A_1^+)T$, whence $d_1^+\not\equiv d_2^-$
by (\ref{ineq}). Similarly, $d_1^-\not\equiv d_2^+$.
Hence one of the following cases takes place.

\smallskip
{\it Case 1.} $d_1^+\equiv d_2^+$ and $d_1^-\equiv d_2^-$. Since
$d_1^\pm - d_2^\pm = (A_2^\pm + A_1^\pm)(A_2^\pm - A_1^\pm)T$,
we derive from (\ref{ineq}) that
\begin{equation}\label{eqsA}
   A_2^+ - A_1^+ = A_2^- - A_1^-=0
  \quad\text{ or }\quad
   A_2^+ - A_1^+ = A_2^- + A_1^-=0.
\end{equation}
By solving these systems of equations, we obtain either
$a_1=r_2$ and $a_2=r_1$ (parallelogram),
or $a_1=a_2$ and $r_1=r_2$ (deltoid with axis $p_0p_1$).

\smallskip
{\it Case 2.} $d_1^-\equiv d_2^-$, $\Delta_1^+=\Delta_2^+=0$.
Due to (\ref{Delta}) and (\ref{ineq}),
the second condition yields $A_1^+ - A_0^+ = A_2^+ - A_0^+ = 0$.
Eliminating $A_0^+$ and factorizing (as in Case 1) $d_1^- - d_2^-$,
we again obtain (\ref{eqsA}).

\smallskip
{\it Case 3.} $d_1^+\equiv d_2^+$, $\Delta_1^-=\Delta_2^-=0$.
Due to (\ref{Delta}) and (\ref{ineq}),
the second condition yields
\begin{equation}\label{eqsA.3}
(A_1^- - A_0^-)(A_1^- + A_0^-)=(A_2^- - A_0^-)(A_2^- + A_0^-)=0,
\end{equation}
which is equivalent to four systems of linear equations. Eliminating
$A_0^-$ from each of them and combining the result with the equation
\hbox{$A_2^+ - A_1^+=0$}
(which follows from $d_1^+\equiv d_2^+$), each time we obtain one of the systems
of equations in (\ref{eqsA}). The lemma is proven.
\end{proof}

\begin{lemma}\label{lem.F[1].square}
Suppose that $\hat f_{11}$ and $\hat f_{12}$ are irreducible and
$\hat F_1=F^m$, $m\ge 1$, where $F$ is either identically zero or an irreducible 
polynomial.
Then $\bf p$ is a Dixon mechanism of the first kind.%
  \footnote{For Dixon-1 we have $F_1=(R_1^2-r^2)F^2$ and
              $F_2=(R_2^2-r^2)F^2$
            with the same $F$.}
\end{lemma}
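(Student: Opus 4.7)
The plan is to handle three cases distinguished by the hypothesis $\hat F_1=F^m$: $F\equiv 0$, $m=1$, and $m\ge 2$. I will rule out the first, reduce the second to Lemma~\ref{lem.propor}, and treat the third as the main case.

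If $F\equiv 0$, then $\hat f_{11}(T_1,t_1)$ and $\hat f_{12}(T_1,t_2)$ share a common root in $T_1$ for every $(t_1,t_2)$; since these polynomials depend on disjoint sets of variables outside $(S_1,T_1)$, any common factor as polynomials in $T_1$ must lie in $\mathbb{C}[S_1,T_1]$, and its existence would contradict the assumed irreducibility of $\hat f_{1j}$. If $m=1$, so that $\hat F_1=F$ is itself irreducible, Lemma~\ref{lem.res} implies that $\hat F_1$ and $\hat F_2$ share a nontrivial divisor, hence $F\mid\hat F_2$; since both $\hat F_i$ have $\deg_{t_j}=4$, the quotient is a constant and $\hat F_2=c\hat F_1$, so Lemma~\ref{lem.propor} applies.

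For the main case $m\ge 2$, I first show $m=2$: the order of vanishing of $\text{Res}_{T_1}(\hat f_{11},\hat f_{12})$ at a generic point of $\{F=0\}$ equals the number of common $T_1$-roots (counted with multiplicity) of the two $T_1$-quadratics. Neither $\hat f_{1j}$ has a generic double root in $T_1$ (a leading-term comparison in $t_j$ of $\text{Discr}_{T_1}(\hat f_{1j})$ shows that identical vanishing of this discriminant would force $r=0$), so the multiplicity is at most $2$, and hence $m=2$. Consequently $\hat f_{11}(T_1,t_1)$ and $\hat f_{12}(T_1,t_2)$ are scalar multiples of each other as polynomials in $T_1$ along $\{F=0\}$. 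Writing their $T_1^2,T_1,T_1^0$ coefficients as $(a_j,b_j,c_j)$, each of the three $2\times 2$ minors $a_1c_2-a_2c_1$, $a_1b_2-a_2b_1$, $b_1c_2-b_2c_1$ vanishes on $\{F=0\}$; each has bidegree $(2,2)$ matching that of $F$, so each is a constant multiple of $F$. A direct computation gives $a_1c_2-a_2c_1=R_1^2F_0$ where $F_0=r^2t_1^2t_2-r^2t_1t_2^2+r_1^2t_2^2-r_2^2t_1^2+r_2^2t_1-r_1^2t_2$ is nonzero, identifying $F$ as a scalar multiple of $F_0$. Requiring that $a_1b_2-a_2b_1$ also be a constant multiple of $F_0$ then forces $\mu_1=\mu_2=2r^2$, where $\mu_j=r^2+R_1^2+r_j^2-r_{1j}^2$, i.e.\ the Dixon-1 length relations $r_{1j}^2=R_1^2+r_j^2-r^2$ for $j=1,2$.

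To obtain the remaining Dixon-1 relations involving $p_2$, I apply the same analysis with $\hat F_2$ in place of $\hat F_1$: it must satisfy the hypothesis of one of Lemma~\ref{lem.propor}, Lemma~\ref{lem.F[1]}, or the present lemma, and each alternative delivers either the $p_2$-version of the length relations directly, or a weaker conclusion which—combined with the already-established $p_1$-relations—forces Dixon-1 via Proposition~\ref{dix.equiv}(a) or Lemma~\ref{collin}. The main obstacle will be the bidegree bookkeeping in the proportionality argument: one must verify that $F_0$ is irreducible under the non-overlapping hypothesis and that, after subtracting the correct scalar multiple of $F_0$ from each minor, the residual has bidegree strictly below $(2,2)$, so that it must vanish identically to yield the desired linear relations on the $\mu_j$.
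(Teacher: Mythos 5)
Your handling of the cases $F\equiv 0$ and $m=1$ is fine, and your main computation for $m\ge 2$ is correct: I verified that $a_1c_2-a_2c_1=R_1^2F_0$ with your $F_0$, and that forcing $a_1b_2-a_2b_1$ to be proportional to $F_0$ yields $\mu_1=\mu_2=2r^2$, i.e.\ $r^2+r_{1j}^2=R_1^2+r_j^2$ for $j=1,2$. This is a genuinely different route from the paper's, which instead shows that the projection $M_1\to C_1$ is two-fold, uses the antiholomorphic involution (\ref{eq.conj}) to produce a second \emph{real} joint $p_1'$ equidistant with $p_1$ from all the $q_j$, and then applies Lemma~\ref{collin} to the flexible $(4,3)$-framework. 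A secondary gap in your version: the assertion that the order of vanishing of the resultant along $\{F=0\}$ equals the number of common roots counted with multiplicity is not justified by excluding double roots alone — a single simple common root whose two branches meet tangentially already gives order $\ge 2$. You need the transversality statement of Lemma~\ref{lem.transv}(b) (which the paper invokes at exactly this point) to get ``order $=$ number of preimages $\le 2$'', hence $m=2$ and generic proportionality of the two quadratics.

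The serious gap is the endgame. Your relations $r^2+r_{1j}^2=R_1^2+r_j^2$ ($j=1,2$) only say (via Lemma~\ref{4kd}) that $q_0,q_1,q_2$ are collinear on a line orthogonal to $p_0p_1$; Dixon-1 additionally requires $p_2$ to lie on the line $p_0p_1$, and nothing you have proved delivers this. The proposed fix — ``apply the same analysis with $\hat F_2$'' — does not go through: the hypotheses of the present lemma need not hold for the index $2$ (one of $\hat f_{21},\hat f_{22}$ may be reducible, a case you omit entirely, and $\hat F_2$ may be reducible without being a power of an irreducible), and the alternative conclusions available (a parallelogrammatic cycle or a deltoid among $p_0,p_2$ and the $q_j$ from Lemmas~\ref{lem.f[i,j]} and~\ref{lem.F[1]}) do not combine with your $p_1$-relations to force $p_2$ onto $p_0p_1$ — e.g.\ a parallelogrammatic $p_0q_1p_2q_2$ together with your collinearity places $p_2$ so that $p_0p_2\perp p_0p_1$, a configuration that must be \emph{excluded} by a further flexibility argument rather than absorbed into Dixon-1. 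Sorting out exactly these case combinations is the content of the paper's Lemmas~\ref{main.lemma}--\ref{lem.paral} and \S\ref{sect.EOP}; your one-sentence appeal to ``each alternative delivers\dots'' replaces the hard part of the proof with an unproved claim, whereas the paper's two-fold-covering argument handles $p_0$ and $p_2$ simultaneously through the auxiliary joint $p_1'$ and Lemma~\ref{collin}.
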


\begin{proof} If $F=0$, this is a particular case ($\lambda=0$)
of Lemma~\ref{lem.propor}, so let $F\ne 0$.
If $m=1$ (i.e., $\hat F_1$ is irreducible), then, since $\hat F_1$ and $\hat F_2$
are bihomogeneous polynomials of the same bidegree which have a common divisor,
the result follows again from Lemma~\ref{lem.propor}.

Let $m\ge 2$. Let us prove in this case that the projection
$\pi:M_1\to C_1$ is two-fold.
Suppose that $C_1$ contains a smooth point $q$ with a single preimage.
Let $\gamma:({\mathbb C},0)\to(Q,q)$
be a holomorphic germ transverse to $C_1$.
By Lemma~\ref{lem.transv}, we may assume that the surfaces
$\hat f_{1j}=0$ are smooth and cross transversally over $q$.
Using the expression of the resultant of two polynomials via their roots
(see, e.g., \cite[Ch.~12, eq.~(1.3)]{\refGKZ}) one can easily derive that
$F_1(\gamma(t))$ has a first order zero at $t=0$. This fact contradicts
the condition $m\ge 2$, hence the projection $\pi$ cannot be one-fold.
Since $\deg_T\hat f_{ij}=2$, we conclude that it is two-fold.

Thus almost all points of $C_1$ have two preimages in $M_1$.
Since $\bf p$ is flexible, we may then assume that
$\pi^{-1}(q_1,q_2)=\{(p_1,q_1,q_2),(p'_1,q_1,q_2)\}$, $p'_1\ne p_1$.
This set itself and one of its elements are invariant under the antiholomorphic
involution (\ref{eq.conj}), hence the other element is invariant as well.
Therefore $(p'_1,p_2;\,q_1,q_2)\in{\mathbb R}M$. Moreover, all this remains true
during a deformation of $\bf p$. Hence the $(4,3)$-framework
$(p_0,p_1,p_2,p'_1;\,q_0,q_1,q_2)$ is flexible. Its joints
$p_1$ and $p'_1$ are equidistant from all the $q_j$'s. With help of
Lemma~\ref{collin}, it is easy to derive from this fact that
$\bf p$ is a Dixon mechanism of the first kind.
The lemma is proven.
\end{proof}

Recall our assumption that $M$ contains a flexible non-overlapping
$(3,3)$-framework
${\bf p}$. 
Say that a cycle in $\bf p$ is {\it fastened}, if it contains the edge $p_0q_0$.
One can summarize Lemmas \ref{lem.f[i,j]}, \ref{lem.F[1]}, and \ref{lem.F[1].square}
as follows.

\begin{lemma}\label{main.lemma}
{\rm(Main Lemma.)}
If $\bf p$ is not a Dixon mechanism of the first kind,
then the $(2,3)$-framework $(p_0,p_1;\,q_0,q_1,q_2)$ contains either
a parallelogrammatic cycle, or a fastened deltoid, or a not fastened deltoid with
axis $p_0p_1$.
\end{lemma}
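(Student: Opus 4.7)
The plan is to dispatch the Main Lemma by a case analysis assembling the three preceding results, keyed on the factorization of $\hat f_{11}$, $\hat f_{12}$, and $\hat F_1$. Throughout the proof, assume that $\bf p$ is \emph{not} a Dixon mechanism of the first kind.

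First I would dispose of the case in which at least one of $\hat f_{11}$, $\hat f_{12}$ is reducible over $\mathbb{C}$, say $\hat f_{1j}$. Lemma~\ref{lem.f[i,j]} then asserts that the 4-cycle $p_0q_0p_1q_j$ is either parallelogrammatic or a deltoid, and in the deltoid case it is automatically \emph{fastened} since it contains the edge $p_0q_0$. Either alternative immediately gives one of the three outcomes demanded by the lemma.

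In the remaining case both $\hat f_{11}$ and $\hat f_{12}$ are irreducible. Lemma~\ref{lem.transv}(a) then guarantees that $M_1$ is a genuine algebraic curve, so $\hat F_1$, whose zero locus is the projection $C_1\subset Q$ of $M_1$, is not identically zero. The flexibility of $\bf p$ combined with Lemma~\ref{lem.res} forces $\hat F_1$ and $\hat F_2$ to share a non-constant irreducible divisor. Now I would split on the shape of $\hat F_1$. If it is a power of a single irreducible polynomial (covering both the irreducible case $m=1$, where Lemma~\ref{lem.propor} makes $\hat F_1$ and $\hat F_2$ proportional, and the case $m\ge 2$), then Lemma~\ref{lem.F[1].square} applies and yields that $\bf p$ is Dixon-1, contradicting our standing hypothesis. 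Hence $\hat F_1$ must have at least two distinct irreducible factors, which is precisely the setting of Lemma~\ref{lem.F[1]}; the latter then produces the remaining two options, namely that the not-fastened 4-cycle $p_0q_1p_1q_2$ is either parallelogrammatic or a deltoid with axis $p_0p_1$.

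No serious obstacle remains, since all the substantive work has already been done in the three cited lemmas. The one sanity check I would not skip is the verification, via Lemma~\ref{lem.transv}(a), that irreducibility of both $\hat f_{1j}$ implies $\hat F_1\not\equiv 0$; this is what allows us to treat $\hat F_1$ as an honest polynomial whose factorization structure controls the geometry of the framework and cleanly separates the hypotheses of Lemmas~\ref{lem.F[1]} and~\ref{lem.F[1].square}.
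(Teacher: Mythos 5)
Your proposal is correct and is exactly the argument the paper intends: the paper's ``proof'' of the Main Lemma is the single sentence that it summarizes Lemmas~\ref{lem.f[i,j]}, \ref{lem.F[1]}, and \ref{lem.F[1].square}, and your case analysis (reducible $\hat f_{1j}$ via Lemma~\ref{lem.f[i,j]}, giving a fastened cycle; otherwise split on whether $\hat F_1$ is a power of an irreducible, using Lemma~\ref{lem.F[1].square} to contradict the non-Dixon-1 hypothesis or Lemma~\ref{lem.F[1]} to get the not fastened cycle $p_0q_1p_1q_2$) is precisely that assembly, spelled out. Your extra check that $\hat F_1\not\equiv 0$ is harmless but not even needed, since Lemma~\ref{lem.F[1].square} explicitly covers the case $F=0$.
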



\section{Completing the proof of Theorem~\ref{t1}}\label{sect.EOP}

Let $\bf p$ be a flexible non-overlapping $(3,3)$-framework which is not
a Dixon mechanism of the first kind. Let us show that $\bf p$ is
a Dixon mechanism of the second kind.

\begin{lemma}\label{lem.deltoid}
Any deltoid in $\bf p$ is a rhombus.
\end{lemma}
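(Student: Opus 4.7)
The plan is to argue by contradiction. Suppose the deltoid $p_1q_1p_2q_2$ of axis $p_1p_2$ is not a rhombus, so that $a=|p_1q_1|=|p_1q_2|\ne|p_2q_1|=|p_2q_2|=b$. The goal is to derive a contradiction with the standing hypothesis that ${\bf p}$ is flexible, non-overlapping, and not a Dixon mechanism of the first kind.

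The Main Lemma (Lemma~\ref{main.lemma}) has a manifest $p\leftrightarrow q$ symmetric analogue, since all its ingredients (the polynomials $F_i$, the circles $P_i$ and $Q_j$, and the auxiliary Lemmas~\ref{lem.f[i,j]}, \ref{lem.F[1]}, and \ref{lem.F[1].square}) are symmetric under interchanging the two parts of the graph. This analogue states: if ${\bf p}$ is not a Dixon mechanism of the first kind, then the sub-$(3,2)$-framework $(p_0,p_1,p_2;\,q_0,q_1)$ contains a parallelogrammatic 4-cycle, a deltoid fastened at the edge $p_0q_0$, or the non-fastened 4-cycle $p_1q_0p_2q_1$ is a deltoid with axis $q_0q_1$.

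First I would apply this $q$-analogue to the sub-framework $(p_0,p_1,p_2;\,q_0,q_1)$. The non-fastened-deltoid alternative would give $|q_1p_1|=|q_1p_2|$, i.e., $a=b$, contrary to our assumption; so we fall in one of the other two cases, producing a length relation among $r$, $r_1$, $a$, $b$, $R_1$, $R_2$. Then I would apply the same analogue to $(p_0,p_1,p_2;\,q_0,q_2)$, obtaining either $a=b$ directly or an analogous relation with $r_1$ replaced by $r_2$.

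Combining the two length relations with the deltoid equalities $|p_1q_j|=a$ and $|p_2q_j|=b$ for $j=1,2$, a case-by-case inspection shows that each surviving combination forces either $a=b$ outright (for instance, any combination mixing $r=a$ from one relation with $r=b$ from the other), or else yields a pair of $p$-joints at equal distance, rod by rod, from each of the three $q$-joints. In the latter case, Lemma~\ref{collin} implies that ${\bf p}$ is a Dixon mechanism of the first kind, contradicting the standing assumption. For the few stubborn combinations where the two relations alone do not suffice, one invokes the Main Lemma once more with a non-standard fixed pair such as $(p_1,q_0)$.

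The main obstacle will be this final case analysis. Each of the two applications of the $q$-analogue Main Lemma yields seven sub-cases (three parallelogrammatic 4-cycles and four fastened deltoids), giving up to $49$ combinations to inspect. The evident symmetry $q_1\leftrightarrow q_2$ (together with the accompanying $r_1\leftrightarrow r_2$) roughly halves the work, and most combinations collapse at once after imposing the deltoid equalities; nevertheless, careful bookkeeping is required to verify that every surviving combination produces either $a=b$ or the equidistance configuration needed to invoke Lemma~\ref{collin}.
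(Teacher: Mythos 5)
Your overall strategy (funnel everything through the Main Lemma into Lemma~\ref{collin}) is the paper's, but your normalization of the deltoid creates a genuine gap. By placing the deltoid as $p_1q_1p_2q_2$ with axis $p_1p_2$, you keep both fixed joints $p_0,q_0$ \emph{outside} the deltoid, so each application of the ($p\leftrightarrow q$ analogue of the) Main Lemma to $(p_0,p_1,p_2;\,q_0,q_j)$ sees only half of the deltoid, and you are left with a $7\times 7$ case analysis that you assert but do not carry out. At least one combination resists the remedies you name: if both applications return ``the non-fastened cycle $p_1q_0p_2q_j$ is parallelogrammatic,'' you learn only $|p_1q_0|=b$ and $|p_2q_0|=a$ and obtain \emph{no} relation involving $|p_0q_0|,|p_0q_1|,|p_0q_2|$; your proposed extra application with fixed pair $(p_1,q_0)$ is then vacuous, because the fastened cycle $p_1q_0p_2q_1$ (sides $b,a,b,a$) is already parallelogrammatic, so the Main Lemma is satisfied without yielding new equations. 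One can still close this case, but it takes several further applications with other fixed pairs (e.g.\ $(p_0,q_0)$ and then $(p_0,q_2)$) and a longer chain of deductions than anything you indicate; several other combinations (e.g.\ mixing the non-fastened parallelogram with a fastened alternative) leave one of $|p_0q_1|,|p_0q_2|$ undetermined in the same way. As written, the proof is therefore incomplete.

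The paper avoids all of this with one renumbering: put the deltoid as $p_0q_1p_1q_2$ with \emph{mirror pair} $\{p_0,p_1\}$ and axis $q_1q_2$, so that $|p_0q_j|=|p_1q_j|$ for $j=1,2$ and the whole deltoid lies inside the single $(2,3)$-framework $(p_0,p_1;\,q_0,q_1,q_2)$ to which the Main Lemma is applied. Then each of the four alternatives either forces $|p_0q_0|=|p_1q_0|$ --- whence $p_0,p_1$ are equidistant from all three $q_j$ and Lemma~\ref{collin} gives Dixon-1, a contradiction --- or forces the deltoid to be a rhombus outright (the non-fastened alternative gives a deltoid with two axes). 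Four short cases, no combinatorial explosion. If you want to salvage your route, the minimal fix is to renumber so that one vertex of the deltoid's mirror pair is a fixed joint; with your normalization the argument does not go through as described.
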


\begin{proof}
Suppose that $\bf p$ contains a deltoid $\Delta$ which is not a rhombus.
Renumber the joints so that
$\Delta=p_0q_1p_1q_2$ and $|p_0q_1|=|p_1q_1|\ne|p_0q_2|=|p_1q_2|$
(see Fig.~\ref{fig.lem}, on the left).
By Lemma~\ref{main.lemma}, the $(2,3)$-framework 
$(p_0,p_1;\,q_0,q_1,q_2)$ must contain a 4-cycle
$\Delta'$ realizing one of the following cases.
In each of them (except the last one) we show that
$p_0$ and $p_1$ are equidistant from $q_0$, which contradicts Lemma~\ref{collin}.

\smallskip
{\it Case 1. Parallelogrammatic cycle.} Then
$\Delta'$ contains both $p_0$, $p_1$, and also at least one of
$q_i$, $i=1$ or $2$. Since $|p_0q_i|=|p_1q_i|$, we conclude that
$\Delta'$ is a rhombus. But $\Delta'\ne\Delta$ (since $\Delta$ is not a rhombus),
hence $q_0\in\Delta'$. Therefore $p_0$ and $p_1$ are equidistant from $q_0$.  

\smallskip
{\it Case 2. Fastened deltoid with axis $p_0p_1$.}
We may assume that
$\Delta'=p_0q_0p_1q_1$, $|p_0q_0|=|p_0q_1|$ and $|p_1q_1|=|p_1q_0|$.
Then $|p_0q_0|=|p_0q_1|=|p_1q_1|=|p_1q_0|$.

\smallskip
{\it Case 3. Fastened deltoid with axis $q_0q_j$.}
By definition $|p_0q_0|=|p_1q_0|$.

\smallskip
{\it Case 4. Not fastened deltoid with axis $p_0p_1$.}
Then $\Delta'=\Delta$ and this is a deltoid with two axes, that is a rhombus.
A contradiction. The lemma is proven.
\end{proof}

\begin{figure}[ht]
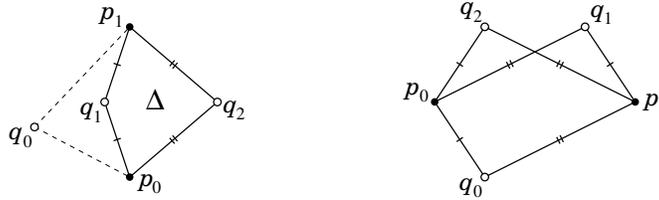

\centering
\includegraphics[width=32.17 mm]{lem-deltoid.eps}\hskip 20mm
\includegraphics[width=36.67 mm]{lem-parall.eps}
\caption{Illustration to the proofs of Lemmas
          \ref{lem.deltoid} (on the left) and \ref{lem.paral} (on the right).}
\label{fig.lem}
\end{figure}

\begin{lemma}\label{lem.paral}
$\bf p$ cannot contain two distinct parallelogrammatic cycles with three common vertices.
\end{lemma}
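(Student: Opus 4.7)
The plan is to argue by contradiction. Suppose that $\mathbf{p}$ contains two distinct parallelogrammatic 4-cycles $C_1 \neq C_2$ sharing three common vertices. Since every 4-cycle in $K_{3,3}$ has exactly two vertices in each part, the unique vertex at which $C_1$ and $C_2$ differ must lie in a single part. After relabeling, and using the symmetry between the two parts of $K_{3,3}$ (which preserves both the hypothesis and the conclusion), I may restrict to the case where $C_1$ and $C_2$ share two $p$-vertices and one $q$-vertex, namely $C_1 = p_0 q_0 p_1 q_1$ and $C_2 = p_0 q_0 p_1 q_2$; the other case (shared pair of $q$-vertices, differing $p$'s) is handled by the same argument with the parts interchanged.

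Next I would unpack the parallelogrammatic condition on each cycle. Setting $\alpha = |p_0 q_0|$ and $\beta = |p_1 q_0|$, the opposite-side equalities in $C_1$ give $|p_1 q_1| = \alpha$ and $|p_0 q_1| = \beta$; the analogous equalities in $C_2$ give $|p_1 q_2| = \alpha$ and $|p_0 q_2| = \beta$. The key consequence is $|p_0 q_1| = |p_0 q_2|$ and $|p_1 q_1| = |p_1 q_2|$, so both $p_0$ and $p_1$ lie on the perpendicular bisector of $q_1 q_2$. This is exactly the statement that the 4-cycle $p_0 q_1 p_1 q_2$ is a deltoid with axis $p_0 p_1$.

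From this observation the argument runs on autopilot. By Lemma~\ref{lem.deltoid}, every deltoid in $\mathbf{p}$ is a rhombus, so all four sides of $p_0 q_1 p_1 q_2$ are equal, which forces $\alpha = \beta$. Consequently $|p_0 q_j| = |p_1 q_j|$ for every $j \in \{0,1,2\}$, meaning $p_0$ and $p_1$ are equidistant from all three of $q_0, q_1, q_2$. Lemma~\ref{collin} then concludes that $\mathbf{p}$ is a Dixon mechanism of the first kind, contradicting the standing assumption of \S\ref{sect.EOP}.

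The only piece of genuine invention is spotting the auxiliary deltoid $p_0 q_1 p_1 q_2$ hiding in the combined structure of the two parallelogrammatic cycles; once this is in hand, Lemmas~\ref{lem.deltoid} and~\ref{collin} dispatch the rest mechanically. The one bookkeeping point to watch is the symmetric case, in which the analogous deltoid is $p_1 q_0 p_2 q_1$ with axis $q_0 q_1$, and one invokes the part-swapped form of Lemma~\ref{collin} (equidistance of two $q$-joints from all three $p$-joints again forces Dixon-1).
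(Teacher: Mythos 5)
Your proof is correct and follows essentially the same route as the paper: both arguments spot the auxiliary deltoid spanned by the two shared same-part vertices and the two differing vertices, apply Lemma~\ref{lem.deltoid} to force a rhombus, and then derive a contradiction. The only cosmetic difference is the endgame: the paper notes that both parallelogrammatic cycles then become rhombi and declares this configuration ``easy to check'' impossible, whereas you close via the equidistance of $p_0,p_1$ from all $q_j$ and Lemma~\ref{collin}; both finishes are valid.
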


\begin{proof} Suppose that $\bf p$ contains
two distinct parallelogrammatic cycles $\Pi_1$ and $\Pi_2$ with three common vertices.
Up to renumbering, we may assume that these are $q_0p_0q_1p_1$ and $p_0q_1p_1q_2$
(see Fig.~\ref{fig.lem}, on the right). Then $q_0p_0q_2p_1$ is a deltoid.
By Lemma~\ref{lem.deltoid}, it must be a rhombus. Hence
$\Pi_1$ and $\Pi_2$ are rhombi as well. It is easy to check that this is impossible.
The lemma is proven.
\end{proof}

Lemmas~\ref{main.lemma} and \ref{lem.deltoid} imply that each
$(2,3)$-framework obtained from $\bf p$ by removal of one joint
contains a parallelogrammatic 4-cycle. 
Using Lemma~\ref{lem.paral}, it is easy to derive from this fact that
the joints of $\bf p$ can be numbered so that the three 4-cycles
$\Pi_{ij}=p_iq_ip_jq_j$, $i<j$, become parallelogrammatic. This means that
one can denote the lengths of the rods by $a,b,c,d$ as in
Proposition~\ref{dix.equiv}(b). It remains to prove that the relation
$a^2+c^2=b^2+d^2$ holds up to renumbering of the joints.
In the notation of~\S\ref{main.idea} we have
$$
   r=r_{11}=r_{22}=a, \qquad R_1 = r_1 = b, \qquad R_2=r_2=c, \qquad r_{12}=r_{21}=d.
$$
Doing these substitutions, we express the coefficients of $F_1$ and $F_2$
as polynomials in $a,b,c,d$.
By Lemma~\ref{lem.res}, the resultant of $F_1$ and $F_2$ with respect to $t_1$
identically vanishes. Hence the resultant of
$F_1(t_1,-c/a)$ and $F_2(t_1,-c/a)$ is zero. A computation shows that it is equal to
\begin{equation}\label{res.bcd}
 16\, b^8 c^{16} (a + c)^8 (a^2 + b^2 - c^2 - d^2)^4 (a^2 + d^2 - b^2 - c^2)^4
 (a^2 + c^2 - b^2 - d^2)^4,
\end{equation}
which completes the proof of Theorem~\ref{t1}.


\section{Flexibility of hyperbolic bipartite frameworks}\label{sect.hyp}

\def\distH{d_H}

Let $H^2$ be the standard hyperbolic plane, i.e.,
a complete simply connected riemannian 2-manifold of constant curvature
equal to $-1$.
We denote the distance in $H^2$ by $\distH(\,,\,)$.
The flexibility condition (\DixI) extends without changes to
the hyperbolic case. Condition (\DixII) admits the following equivalent
reformulation, which also extents to the hyperbolic case:

\smallskip
(\DixII) There are two orthogonal lines
and two quadrilaterals symmetric with respect to each of them
and with vertices not belonging to them such that
$p_1,\dots,p_m$ are at the vertices of one quadrilateral and
$q_1,\dots,q_n$ are at the vertices of the other one.

\begin{theorem}\label{tH} Theorem~\ref{t1} holds for $H^2$.
\end{theorem}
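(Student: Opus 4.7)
The plan is to mirror the proof of Theorem~\ref{t1} section by section, replacing each Euclidean distance formula by its hyperbolic counterpart while leaving all synthetic arguments untouched. The reduction of the general case $\min(m,n)\ge 3$ to $m=n=3$ given at the end of Section~\ref{sec1} used only three geometric facts: flexibility is inherited by subframeworks, a non-degenerate triangle has at most one pair of mutually orthogonal sides, and a quadrilateral symmetric about two mutually orthogonal lines is determined by any single vertex. All three hold verbatim in $H^2$, so it suffices to prove Theorem~\ref{tH} for $m=n=3$.

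For the $(3,3)$-case I would work in the hyperboloid model, placing $q_0$ at the basepoint and $p_0$ at hyperbolic distance $r$ from $q_0$ along a fixed geodesic. The circles $P_i$ (around $q_0$ of hyperbolic radius $R_i$) and $Q_j$ (around $p_0$ of hyperbolic radius $r_j$) are orbits of one-parameter groups of hyperbolic rotations and admit rational parametrizations by complex numbers $T_i$, $t_j$ on ordinary Euclidean circles in $\mathbb C$; the parametrization borrowed from \cite{\refWaHu} in Section~\ref{main.idea} adapts naturally with $\cosh$ and $\sinh$ in place of the Euclidean squared-length expressions. Applying the hyperbolic law of cosines to the rod-length constraint $d_H(p_i,q_j)=r_{ij}$ yields a polynomial $f_{ij}(T_i,t_j)$ of bidegree $(2,2)$, completely analogous in form to the Euclidean one. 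The definitions of $F_i$, $\hat f_{ij}$, $\hat F_i$ and of the curves $M$, $M_k$, $C_k$, $C_{ij}$ from Sections~\ref{main.idea}--\ref{sect.complex} carry over verbatim, and Lemma~\ref{lem.res} retains its proof.

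The crux is to verify that the algebraic skeleton of Section~\ref{sect.reduc} survives the substitution: the discriminant $D_j=\text{Discr}_{t_j} f_{1j}$ should still factor as $d_j^+ d_j^-$ with each $d_j^\pm$ quadratic in $T$, the further discriminants $\Delta_j^\pm=\text{Discr}_T d_j^\pm$ should factor into four linear forms in hyperbolic analogs of $r_{1j}\pm r_{0j}$ and $R\pm r$, and the non-vanishing conditions~(\ref{ineq}) should become hyperbolic triangle inequalities that still follow from Lemma~\ref{nuiz}. Once this is established, Lemmas~\ref{lem.bb}--\ref{lem.F[1].square}, \ref{main.lemma}, \ref{lem.deltoid}, \ref{lem.paral}, and \ref{lem.propor} transfer without change, because their proofs depend only on the biquadratic form of $f_{ij}$ and on the combinatorics of the resulting geometric dichotomy (parallelogrammatic cycle, deltoid, rhombus), each of which is well defined in $H^2$. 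Lemma~\ref{collin} is reproved by the same Jacobian argument: the determinant computed in hyperbolic coordinates becomes a product of $\sinh$-factors of pairwise distances, nonzero by the non-overlapping hypothesis. Finally, Lemma~\ref{4kd} and Proposition~\ref{dix.equiv}(a) must be restated with the appropriate hyperbolic replacement for the sum-of-squared-lengths of opposite sides, which is a short direct computation using the hyperboloid inner product.

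The final step proceeds as in Section~\ref{sect.EOP} and reduces to a resultant computation analogous to~(\ref{res.bcd}), whose nontrivial factors should, by the same symmetry, correspond to the three cyclic renumberings yielding the hyperbolic Dixon-2 length relation (up to exchange of $b$ and $d$). The main obstacle is precisely the discriminant factorization identified in the previous paragraph: this is the only place where a short but essential symbolic computation is required, and it is the pivot on which all of Section~\ref{sect.reduc} turns. Once the hyperbolic $f_{ij}$ exhibits the same factorization pattern for $D_j$ and $\Delta_j^\pm$, and once the hyperbolic form of Lemma~\ref{4kd} is in hand, the remainder of the proof is mechanical.
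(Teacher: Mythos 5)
Your plan is essentially the paper's own proof: reduce to $m=n=3$ exactly as in \S\ref{sec1}, re-derive the biquadratic $f_{ij}$ via a rational parametrization of hyperbolic circles (the paper uses the Poincar\'e disk, i.e.\ the stereographic projection of your hyperboloid model), and verify that the discriminant factorizations of \S\ref{sect.reduc} and the hyperbolic analogs of Lemma~\ref{4kd} and Lemma~\ref{collin} survive, which they do. The only spots where you underestimate the work are Lemma~\ref{lem.propor}, whose hyperbolic coefficients do not yield to the same hand elimination and are handled in the paper by a Gr\"obner-basis computation rather than ``without change,'' and the hyperbolic restatement of Proposition~\ref{dix.equiv}(b) (with $u_{00}+u_{02}=u_{01}+u_{12}$ in place of $a^2+c^2=b^2+d^2$), which is needed at the final step and whose proof requires a new deformation argument; neither changes the approach.
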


The proof of Theorem~\ref{tH} is almost the same as for Theorem~\ref{t1}.
In this section we just explain which elements of the proof (mostly, the formulas)
should be modified.


\subsection{Lobachevsky coordinates in $H^2$.
Hyperbolic version of \S1 and \S\ref{sect.prelim}}
\label{sect.Loba}

It is evident that Lemmas~\ref{fix} and \ref{nuiz} are valid for $H^2$.
For other facts from \S1 and \S\ref{sect.prelim},
it is convenient to use the following
hyperbolic analog of the Cartesian coordinate system called {\it Lobachevsky
coordinate system}. Fix an oriented line $\ell$ and a point $O\in\ell$.
Then the coordinates $(x,y)$ of a point $p$ are
$x=\pm\distH(O,q)$ and $y=\pm\distH(p,q)$ where $q\in\ell$ is such that
$pq\perp\ell$ and the signs are chosen according to the quadrant containing $p$.
In these coordinates we have
\begin{equation}\label{L.coord}
      \cosh\distH\big((x_1,y_1),(x_2,y_2)\big)
      = \cosh y_1\cosh y_2\cosh(x_2-x_1) - \sinh y_1\sinh y_2.
\end{equation}
The following is a hyperbolic analog of Lemma~\ref{4kd}.

\begin{lemma}\label{4kd.hyp}
The diagonals of a quadrilateral (maybe, self-crossing)
are orthogonal if and only if $\cosh a \cosh c=\cosh b \cosh d$ where $a,b,c,d$
are the lengths of its consecutive sides.
\end{lemma}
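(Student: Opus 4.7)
The plan is to run the hyperbolic analog of the proof of Lemma~\ref{4kd}: pick coordinates adapted to one of the diagonals, so that the side lengths become explicit functions of the coordinates via~(\ref{L.coord}), and the orthogonality of the diagonals becomes a linear condition on those coordinates.

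Label the vertices cyclically $v_0,v_1,v_2,v_3$ so that $a,b,c,d$ are the lengths of the consecutive sides $v_0v_1, v_1v_2, v_2v_3, v_3v_0$. Choose a Lobachevsky coordinate system whose base line $\ell$ contains the diagonal $v_0v_2$. Then $v_0=(x_0,0)$, $v_2=(x_2,0)$ with $x_0\neq x_2$, while $v_1=(x_1,y_1)$, $v_3=(x_3,y_3)$. Since the geodesics perpendicular to $\ell$ are exactly the vertical lines $x=\mathrm{const}$, the diagonals are orthogonal if and only if the line through $v_1$ and $v_3$ is such a vertical line, i.e.\ $x_1=x_3$.

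Plugging the vanishing $y$-coordinates of $v_0,v_2$ into~(\ref{L.coord}) kills the $\sinh$-term in each side length and yields $\cosh a=\cosh y_1\cosh(x_1-x_0)$, $\cosh b=\cosh y_1\cosh(x_2-x_1)$, $\cosh c=\cosh y_3\cosh(x_3-x_2)$, $\cosh d=\cosh y_3\cosh(x_0-x_3)$. Cancelling the common factor $\cosh y_1\cosh y_3$, the desired identity $\cosh a\cosh c=\cosh b\cosh d$ reduces to
\[
  \cosh(x_1-x_0)\cosh(x_3-x_2)=\cosh(x_2-x_1)\cosh(x_0-x_3).
\]

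The product-to-sum identity $2\cosh A\cosh B=\cosh(A+B)+\cosh(A-B)$, combined with the evenness of $\cosh$, shows that one of the two resulting cosine terms agrees on both sides, so the equation reduces further to $\cosh(x_1+x_2-x_0-x_3)=\cosh(x_2+x_3-x_0-x_1)$; this forces either $x_1=x_3$ or $x_0=x_2$. The second alternative is excluded by $v_0\neq v_2$, so the equality of lengths is equivalent to $x_1=x_3$, which is precisely the orthogonality criterion for the diagonals. No serious obstacle is foreseen: the whole argument is a short hyperbolic-trigonometric calculation, and the Lobachevsky setup has been chosen so that the formulas collapse in the same clean way as the vector identity does in the Euclidean case.
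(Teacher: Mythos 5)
Your proof is correct and follows essentially the same route as the paper's: both place a Lobachevsky coordinate axis along one diagonal, obtain the same expressions $\cosh(\text{side})=\cosh y\,\cosh(\Delta x)$ for the four sides via (\ref{L.coord}), and reduce orthogonality of the diagonals to equality of the $x$-coordinates of the two off-axis vertices. The only cosmetic difference is the last algebraic step: the paper factors the difference $\cosh a\cosh c-\cosh b\cosh d$ in one stroke as $\sinh(x_1-x_3)\sinh(x_4-x_2)\cosh y_2\cosh y_4$, whereas you reach the same equivalence via the product-to-sum identity and the fact that $\cosh u=\cosh v$ forces $u=\pm v$, then discard the degenerate alternative.
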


\begin{proof}
Consider a quadrilateral $p_1p_2p_3p_4$ with $\distH(p_1,p_2)=a$,
$\distH(p_2,p_3)=b$, $\distH(p_3,p_4)=c$, $\distH(p_4,p_1)=d$. 
Introduce a Lobachevsky coordinate
system with $x$-axis $p_1p_3$. Let $(x_k,y_k)$ be the coordinates of $p_k$
(then $y_1=y_3=0$). By (\ref{L.coord}) we have 
$$
   \cosh\distH(p_i, p_j)=\cosh(x_i-x_j)\cosh y_j,\qquad i=1,3,\;\;j=2,4,
$$
whence the following identity, which implies the result:
$$
  \cosh a\cosh c - \cosh b\cosh d=\sinh(x_1-x_3)\sinh(x_4-x_2)\cosh y_2\cosh y_4.
$$
\end{proof}

For the sake of coherence with the Euclidean case,
we still say that a $4$-cycle is {\it parallelogrammatic}
if the opposite sides have equal lengths (though parallelism no longer plays any role).
We call it {\it anti-parallelogram} (resp. {\it parallelogram}) either if it is
{\it degenerate}, i.e., all its vertices are collinear, or if it is (resp. is not)
self-crossing.

The following is a hyperbolic analog of Proposition~\ref{dix.equiv} and the proof
is also similar.

\begin{proposition}\label{propH}
Let ${\bf p}=(p_0,\dots,p_{m-1};\,q_0,\dots,q_{n-1})$ be a non-overlapping
$(m,n)$-framework in $H^2$. Denote $u_{ij}=\cosh\distH(p_i,q_j)$.

\smallskip
(a). $\bf p$ satisfies (\DixI) if and only if,
for each cycle $p_iq_jp_kq_l$, one has $u_{ij}u_{kl}=u_{il}u_{jk}$.
As in Proposition~\ref{dix.equiv},
these conditions for cycles with fixed $i$ and $j$ generate all the others.

\smallskip
(b). If $\bf p$ is flexible and $m=n=3$, then $\bf p$ satisfies (\DixII) if and only if,
up to renumbering, one has
$u_{00}=u_{11}=u_{22}$, $u_{01}=u_{10}$, $u_{12}=u_{21}$, $u_{20}=u_{02}$, and
$u_{00}+u_{02} = u_{01} + u_{12}$.
\end{proposition}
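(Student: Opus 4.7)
The plan is to mirror the proof of Proposition~\ref{dix.equiv}, substituting Lemma~\ref{4kd.hyp} for Lemma~\ref{4kd} and the hyperbolic distance formula~\eqref{L.coord} for the Euclidean one. For part (a), the identity $u_{ij}u_{kl}=u_{il}u_{jk}$ applied to the cycle $p_iq_jp_kq_l$ is by Lemma~\ref{4kd.hyp} precisely the condition that its diagonals $p_ip_k$ and $q_jq_l$ be orthogonal; if (\DixI) holds these diagonals lie on the mutually perpendicular lines $P$ and $Q$, so the condition is universal. Conversely, if it holds for every cycle, fixing any pair $q_j,q_l$ makes every chord $p_ip_k$ perpendicular to $q_jq_l$, forcing $p_1,\dots,p_m$ onto a common line perpendicular to $q_jq_l$; symmetrically the $q_i$'s are collinear on a perpendicular line, giving (\DixI). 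The $(m-1)(n-1)$ relations with $i=0,j=0$ give $u_{kl}=u_{0l}u_{k0}/u_{00}$, from which the general identity follows by substitution.

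For the ``only if'' direction of (b), I place the center of symmetry of (\DixII) at the origin of Lobachevsky coordinates, with the two orthogonal symmetry lines as coordinate axes. The reflections across these axes act as $(x,y)\mapsto(x,-y)$ and $(x,y)\mapsto(-x,y)$ (both being isometries of $H^2$, as can be verified directly from~\eqref{L.coord}). The vertices of the two symmetric quadrilaterals then lie at $(\pm\alpha,\pm\beta)$ and $(\pm\gamma,\pm\delta)$; choosing three of each and relabeling according to one of the configurations in Fig.~\ref{fig.dix2}, a direct application of~\eqref{L.coord} yields $u_{00}=u_{11}=u_{22}=A-S$, $u_{01}=u_{10}=B-S$, $u_{02}=u_{20}=B+S$, $u_{12}=u_{21}=A+S$, where $A=\cosh\beta\cosh\delta\cosh(\alpha-\gamma)$, $B=\cosh\beta\cosh\delta\cosh(\alpha+\gamma)$, and $S=\sinh\beta\sinh\delta$. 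In particular $u_{00}+u_{02}=A+B=u_{01}+u_{12}$, as claimed.

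For the ``if'' direction, the equalities among the $u_{ij}$ ensure that each cycle $\Pi_{ij}=p_iq_ip_jq_j$ is parallelogrammatic, hence either a non-self-crossing parallelogram or a (possibly degenerate) antiparallelogram. I would first use flexibility and Lemma~\ref{fix}, by an argument analogous to the Euclidean rigidity step, to rule out the case that both $\Pi_{01}$ and $\Pi_{12}$ are non-degenerate non-self-crossing parallelograms; then one of them, say $\Pi_{01}$, is an antiparallelogram, admitting a reflective symmetry axis $\ell_1$ (the common perpendicular bisector of its equal diagonals). Taking $\ell_1$ as the $x$-axis of Lobachevsky coordinates, $p_0,q_0$ are reflections of $p_1,q_1$; I would then use the remaining hypotheses together with the sum relation and~\eqref{L.coord} to pin $p_2,q_2$ to be symmetric across a perpendicular axis $\ell_2\perp\ell_1$, recovering (\DixII).

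The principal obstacle is this last step. In the Euclidean proof, $a^2+c^2=b^2+d^2$ combined with Lemma~\ref{4kd} directly yields the orthogonalities $p_0p_1\perp q_1q_2$ and $q_0q_1\perp p_1p_2$, from which the second symmetry axis emerges via a parallelism chain. In $H^2$ the sum-of-hyperbolic-cosines condition is the correct Euclidean-limit analog of the Euclidean sum-of-squares, but it does not encode orthogonality of any pair of cycle diagonals (the true hyperbolic orthogonal-diagonals condition would read $u_{00}u_{02}=u_{01}u_{12}$, strictly stronger and failing generically in (\DixII)); parallelism also loses its Euclidean utility in $H^2$. I therefore expect the cleanest route will be an explicit algebraic manipulation of the constraints in Lobachevsky coordinates adapted to the antiparallelogram $\Pi_{01}$, rather than a purely geometric translation of the Euclidean proof.
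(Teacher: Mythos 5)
Your part (a) and the ``only if'' half of (b) are correct and coincide with the paper's treatment (a one-line appeal to Lemma~\ref{4kd.hyp}, and a direct computation in Lobachevsky coordinates with the symmetry axes as coordinate axes). Your diagnosis of the difficulty in the ``if'' half of (b) is also accurate: the relation $u_{00}+u_{02}=u_{01}+u_{12}$ is not an orthogonality condition in $H^2$ (that would be $u_{00}u_{02}=u_{01}u_{12}$), so the Euclidean chain of perpendicularities and parallelisms is unavailable. The problem is that the proposal stops exactly where the proof has to be done: both substantive steps of the ``if'' direction are announced rather than carried out.

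First, ruling out the case where $\Pi_{01}$ and $\Pi_{12}$ are both parallelograms is \emph{not} ``analogous to the Euclidean rigidity step.'' The Euclidean argument rests on $p_0q_0q_2p_2$ being a parallelogram with equal diagonals, hence a rectangle; in $H^2$ the composition of the two point symmetries is a translation along the line through their centers, and points at different distances from that line are displaced by different amounts, so one cannot even conclude $|p_0p_2|=|q_0q_2|$, let alone invoke a rectangle. The paper replaces this step by an analytic argument: taking that line as the $x$-axis and using $u_{02}=u_{20}$ to get $p_k=(x_k,(-1)^ky_p)$, $q_k=(x_k,(-1)^ky_q)$, it differentiates four length constraints along the flex and computes a $4\times4$ determinant that vanishes only when $y_p+y_q=0$, i.e.\ when the framework is mirror-symmetric in the $x$-axis --- a condition that cannot persist under a nonconstant flex. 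Second, for the antiparallelogram case you only ``expect'' that an algebraic manipulation in adapted coordinates will pin down $p_2$ and $q_2$. That is indeed the paper's route, and the computation is short: with the symmetry axis of $\Pi_{01}$ as $x$-axis, $p_0=(x_p,y_p)$, $p_1=(x_p,-y_p)$, $q_0=(x_q,y_q)$, $q_1=(x_q,-y_q)$, and $p_2=(-x_p,-y)$ after a shift of the $x$-coordinate, one finds
$u_{00}+u_{02}-u_{01}-u_{12}=2(\sinh y-\sinh y_p)\sinh y_q$,
which forces $y=y_p$; then $q_2=(-x_q,-y_q)$ because it is determined by its distances to the three non-collinear points $p_0,p_1,p_2$. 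You guessed the right strategy, but as written the proposal has a genuine gap at the two steps that constitute the actual content of part (b).
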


\begin{proof}
(a). Immediately follows from Lemma~\ref{4kd.hyp}.

(b). The condition on the lengths is derived from (\DixII) by a direct computation
in Lobachevsky's coordinates.
Let us prove the inverse implication. Let $\bf p$ be flexible and
satisfy the condition on the lengths.
Consider a smooth deformation ${\bf p}={\bf p}(t)$ with constant $u_{ij}$'s.
The cycles $\Pi_{ij}=p_iq_ip_jq_j$ are
parallelogrammatic. Suppose that $\Pi_{01}$ and $\Pi_{12}$ are parallelograms.
We choose Lobachevsky's coordinates so that the $x$-axis is the line passing through
the centers of symmetry of $\Pi_{01}$ and $\Pi_{12}$ (we may assume that
this condition is fulfilled for each $t$). Then the composition
of these central symmetries is a shift $(x,y)\mapsto(x+a,y)$ such that
$p_0\mapsto p_2$, $q_0\mapsto q_2$. Since $u_{02}=u_{20}$, this fact combined
with (\ref{L.coord}) implies that
$p_0$ and $q_0$ (as well as $p_2$ and $q_2$) have equal $x$-coordinates.
Hence, we have $p_k=(x_k,(-1)^k y_p)$, $q_k=(x_k,(-1)^k y_q)$, $k=0,1,2$. 
By a shift of the $x$-coordinate
we can achieve $x_1(t)=0$ for each $t$. Then, by (\ref{L.coord}), we have
$$
\begin{matrix}
   u_{00}=\cosh(y_p-y_q), &
   u_{01}+u_{00}=\cosh y_p\cosh y_q(\cosh x_0+1), \\
   u_{02}-u_{00}=\cosh y_p\cosh y_q(\cosh(x_2-x_0)-1), &
   u_{12}+u_{00}=\cosh y_p\cosh y_q(\cosh x_2+1).
\end{matrix}
$$
Differentiating these identities with respect to $t$, we obtain four linear
homogeneous equations for $x'_0,x'_2,y'_p,y'_q$ (cf.~the proof of Lemma~\ref{collin}).
The determinant is equal to
$$
  (1+\cosh x_0)(1+\cosh x_2)\big(\cosh y_p\cosh y_q\big)^2
  \big(1-\cosh(x_2-x_0)\big)
  \sinh(y_p+y_q)\sinh(y_p-y_q).
$$
It vanishes only when $y_p+y_q=0$ (since $\bf p$ is non-overlapping), which means
that $\bf p$ is symmetric with respect to the $x$-axis. However,
this condition cannot be kept during a non-constant deformation.

The obtained contradiction shows that $\Pi_{01}$ or $\Pi_{12}$ is an
anti-parallelogram. Let it be $\Pi_{01}$ (the case of $\Pi_{12}$ is analogous).
Then we can choose Lobachevsky's coordinates so that
$p_0=(x_p,y_p)$, $q_0=(x_q,y_q)$, $p_1=(x_p,-y_p)$, $q_1=(x_q,-y_q)$.
Shifting the $x$-coordinates,
we can achieve that $p_2=(-x_p,-y)$ for some $y\in\mathbb R$.
Then by (\ref{L.coord}) we have
$$
    u_{00} + u_{02} - u_{01} - u_{12} = 2(\sinh y - \sinh y_p)\sinh y_q
$$
whence $p_2=(-x_p,-y_p)$. Then $q_2=(-x_q,-y_q)$ because $q_2$ is uniquely
determined by the distances to the three non-collinear points $p_0,p_1,p_2$.
\end{proof}

Using Lobachevsky's coordinates, the proof
of Lemma~\ref{collin} repeats word-by-word in the hyperbolic setting but
the identities (\ref{eq.collin}) should be replaced with
$$
   \frac{\cosh x_j}{\cosh x_0} = \frac{\cosh r_j}{\cosh r_0},\qquad
   \frac{\cosh(x_j-a)}{\cosh(x_0-a)} = \frac{\cosh R_j}{\cosh R_0},\qquad j=1,2,
$$
and then the determinant of the linear system for
$x'_0,x'_1,x'_2,a'$ becomes
$$
   \frac{\sinh a\, \sinh(x_0-x_1)\,\sinh(x_0-x_2)\,\sinh(x_1-x_2)}
    {\cosh x_0\cosh x_1\cosh x_2\cosh(x_0-a)\cosh(x_1-a)\cosh(x_2-a)}.
$$


\subsection{Poincar\'e model of $H^2$.
Hyperbolic version of \S\ref{main.idea} and \S\S\ref{sect.gen}--\ref{sect.EOP}}
\label{sect.Poin}

In this subsection
we use the Poincar\'e model of $H^2$ in the unit disk
$\mathbb D\subset\mathbb C$, where the geodesics are circles orthogonal to
$\partial\mathbb D$, and the distance is
\begin{equation}\label{distH}
   \cosh\distH(p,q) = 1+\frac{2(p-q)(\bar p-\bar q)}{(1-p\bar p)(1-q\bar q)},
\end{equation}
in particular the $\distH$-circle of radius $r$ centered at $0$ is the
$\mathbb C$-circle
$\{|z|=l\}$ where $u=\cosh r$, $l=\rho_H(u)$, and the function
$\rho_H:[1,+\infty)\mapsto[0,1)$ is defined by $\rho_H(u)=\sqrt{(u-1)/(u+1)}$.

We still denote the rod lengths $r_{ij}=\distH(p_i,q_j)$, $R_i=r_{i0}$, $r_j=r_{0j}$,
$r=r_{00}$.
We also set
\begin{equation}\label{u[i,j]}
\begin{matrix}
 u_{ij}=\cosh r_{ij}, & U_i=\cosh R_i, & u_j=\cosh r_j, & u=\cosh r,\\
 l_{ij}=\rho_H(u_{ij}), & L_i=\rho_H(U_i), & l_j=\rho_H(u_j), & l=\rho_H(u).
\end{matrix}
\end{equation}
Let $q_0=0$ and $p_0=l$ (then $\distH(p_0,q_0)=r$). For the circles
$$
   P_k=\{p_k\in\mathbb D\mid\distH(q_0,p_k)=R_k\},\qquad
   Q_k=\{q_k\in\mathbb D\mid\distH(p_0,q_k)=r_k\},\qquad k=1,2,
$$
we choose the parametrizations
$p_i(T_i) = l T_i$ and $q_j(t_j) = (l+lt_j)/(l^2t_j+1)$ where the parameters
$T_i$ and $t_j$ run over the circles $|T_i|=L_i/l$ and $|t_j|=l_j/l$
respectively. In order to check that $t_j\mapsto q_j(t_j)$ parametrizes $Q_j$,
remark that $Q_j$ is the image of the circle
$$
   Q_j^*=\{q_j^*\in\mathbb D\mid\distH(q_0,q_j^*)=r_j\}
        =\{|z|=l_j\}
$$
under the mapping
$z\mapsto(l+z)/(lz+1)$, which is a conformal isomorphism of $\mathbb D$
taking $0$ to $l$ (i.e, taking $q_0$ to $p_0$).
We define the algebraic sets $M$, $M_i$, $C_i$, and $C_{ij}$
as in the Euclidean case.
Then the curve $C_{ij}$ has defining equation $f_{ij}(T_i,t_j)=0$ where
$f_{ij}$ is the numerator of the rational function in $T_i,t_j$ obtained from
$\distH\big(p_i(T_i),q_j(t_j)\big)-u_{ij}$ by applying
(\ref{distH}) with the substitutions
$\overline T_i=L_i^2/(l^2T_i)$ and $\overline t_j=l_j^2/(l^2t_j)$.
So, we may define $f_{ij}$ by setting
$$
  \frac{f_{ij}(T_i,t_j)}{4(u+1)T_it_j} =
  1 + \frac{ 2\Big(lT_i-q_j(t_j)\Big)
           \left( \frac{L_i^2}{lT_i} - q_j\big(\frac{l_j^2}{l^2t_j}\big) \right)}
      {(1-L_i^2)\left(1- q_j(t_j)q_j\big(\frac{l_j^2}{l^2t_j}\big) \right)}
      - u_{ij}
$$
($f_{ij}$ is invariant under $T_i\leftrightarrow-t_j$,
$L_i\leftrightarrow l_j$ though it is not immediately seen in this formula).
We see that $f_{ij}$ is a polynomial in $T_i,t_j$ of degree 2 in each variable;
its coefficients are rational functions of $l^2,L_i^2,l_j^2,u,u_{ij}$.
By the substitution $l=\rho_H(u)$, $L_i=\rho_H(U_i)$, $l_j=\rho_H(u_j)$, we
express $f_{ij}$ as a sum of 72 monomials in $T_i,t_j,u,U_i,u_j,u_{ij}$, $i,j=1,2$.
We set
$$
      F_i(t_1,t_2) = \frac{\text{Res}_{\,T_i}(f_{i1},f_{i2})}{16(1+u)^4(1-U_i^2)}.
$$
It is a sum of $445$ monomials in $t_1$, $t_2$, and all the $u_{ij}$.
As in \S\ref{main.idea}, $\deg_{t_j}F_i=4$ for each $i,j$.

Below we use the notation $A\doteq B$ to say that $A=n\mu B$ where $n\in\mathbb Q$
and $\mu$ is a product of some factors of the form $(u_{ij}\pm 1)^{\pm1}$, $i,j=0,1,2$.

\begin{proof}[Proof of Lemma~\ref{lem.propor} in the hyperbolic setting]
Let $c_{kl}$ be the coefficient of $t_1^k t_2^l$ in $F_1-\lambda F_2$.
We have
$$
   c_{04} 
    \doteq U_1^2-u^2-\lambda U_2^2+\lambda u^2,
$$
$$
   c_{20} 
    \doteq u_1^2(\lambda-1)-\lambda u_{21}^2+u_{11}^2,\qquad
   c_{02} 
    \doteq u_2^2(\lambda-1)-\lambda u_{22}^2+u_{12}^2,
$$
thus, if $\lambda(1-\lambda)=0$, the arguments as in
the proof of Lemma~\ref{lem.propor} yield the result. 
Assume that $\lambda(1-\lambda)\ne 0$.
Let $\Lambda=\mathbb Q[\lambda,u_{ij}]_{i,j=0,1,2}$
be the ring of polynomials in $\lambda$
and all the $u_{ij}$'s.
The coefficients $c_{kl}$ are represented by elements of $\Lambda$.
Let $e_{kl}$ be obtained from $c_{kl}$ by factorizing it in $\Lambda$ and getting rid
of all factors of the form $u_{ij}\pm 1$.
We are going to show that any real solution of the system of equations
$c_{kl}=0$, $k,l=0,\dots,4$,
such that $u_{ij}>1$ and $\lambda(1-\lambda)\ne0$ is a solution of 
the system of equations $b_{ij}=0$, $i,j=1,2$ where
$b_{ij} = u_{00}u_{ij}-u_{i0}u_{0j}$.
To this end it is enough to show that the ideal
$$
  \big(e_{02},\,e_{20},\,e_{03},\,e_{30},\,e_{04},\,
  1+w_0\lambda(1-\lambda),\,
   1+w_1 b_{11}+w_2 b_{12}+w_3 b_{21}+w_4 b_{22}\big)
$$
in $\Lambda[w_0,\dots,w_4]$ contains $1$. This fact can be checked
by computing the Gr\"obner basis (which is very fast in this case with a computer).
\end{proof}

In the proof of Lemma~\ref{lem.f[i,j]}, in the case when $\hat f_{1j}$
has a non-constant divisor $\hat f_0$ of degree zero in $t_j$, we have
$\text{Res}_{\,T}(c_0,c_2)\doteq(U_1^2-u^2)S^4$.
Hence $U_1=u$, which gives
\begin{equation}\label{c0c2}
  c_0\doteq(S-T)\big((1-u)S+(1+u)T\big),\qquad
  c_2\doteq(S-T)\big((1+u)S+(1-u)T\big).
\end{equation}
Thus $\hat f_0=S-T$, and after the substitution
$S=T$, $U_1=u$ we obtain $c_1\doteq(u_1-u_{1j})T^2$.

The rest of \S\ref{sect.reduc} repeats word-by-word using the following equalities,
where we 
set  (as in \S\ref{sect.reduc}) $A_j^\pm=r_{1j}\pm r_{0j}$, $j=0,1,2$,
and
use the notation ${\mathfrak s}(x)$ for $\sinh(x/2)$:
\begin{align}
   & d_j^\pm 
       \doteq
         (u-1)(U_1+1)T^2 + 2\big(u_j u_{1j}-u U_1
     \pm{\mathfrak s}(2r_j){\mathfrak s}(2r_{1j})\big)T + (u+1)(U_1-1),    \label{d[j]}
\\
   &{\Delta_j^\pm} 
       \doteq 
           {\mathfrak s}(A_j^\pm + A_0^+)\,{\mathfrak s}(A_j^\pm - A_0^+)\,
           {\mathfrak s}(A_j^\pm + A_0^-)\,{\mathfrak s}(A_j^\pm - A_0^-), \label{De[j]}
\\
   & d_2^{\pm_2} - d_1^{\pm_1} 
          \doteq
                {\mathfrak s}(A_2^{\pm_2} + A_1^{\pm_1})\,
                {\mathfrak s}(A_2^{\pm_2} - A_1^{\pm_1})\,T
       \qquad\text{($\pm_1$ and $\pm_2$ are independent signs)}.
                                                                        \label{d[1]-d[2]}
\end{align}

The hyperbolic version of the computation at the end of \S\ref{sect.EOP} is as
follows: if
$$
   u_{11}=u_{22}=u,\qquad U_1=u_1,\qquad U_2=u_2,\qquad u_{12}=u_{21},
$$
then the resultant of $lF_1(t_1,\pm l_2/l)$ and $lF_2(t_1,\pm l_2/l)$
with respect to $t_1$ is equal to
%
\begin{equation}\label{last.res}
\begin{matrix}
 (l\pm l_2)^8(l \hskip 1pt 
      l_2\pm\varepsilon)^8
 (u+1)^{16}(u_1^2-1)^4(u_2-1)^8(u+u_1+u_2+u_{12})^4 \hbox to 10mm{}\\
 \hbox to 14mm{}\times(u+u_1-u_2-u_{12})^4(u+u_2-u_1-u_{12})^4(u+u_{12}-u_1-u_2)^4,
  \quad\varepsilon=1
\end{matrix}
\end{equation}
(here ``$+$'' for ``$\pm$'' is enough, but both signs
will be needed at the end of \S\ref{sect.EOPS}).

\section{Flexibility of spherical bipartite frameworks}\label{sect.sph}

{\it Spherical $(m,n)$-frameworks} and their flexibility are defined as
in the planar case but the points $p_i$ and $q_j$ are chosen on the
unit sphere $\{x^2+y^2+z^2=1\}\subset{\mathbb R}^3$
so that $p_i\ne\pm q_j$ for any $i,j$.
Say that a spherical $(m,n)$-framework is {\it $\mathbb{P}^2$-non-overlapping}
if $p_i\ne\pm p_j$ and $q_i\ne\pm q_j$ when $i\ne j$.
The flexibility conditions (D1) and (D2) repeat almost word-by-word
in the spherical case (cf.~\cite[\S6]{\refWu}). They can be formulated as follows.
\begin{enumerate}
\item[(\SDixI)] $p_1,\dots,p_n$ lie on one plane, $q_1,\dots,q_m$ lie
          on another plane, and these planes are orthogonal to each other and pass
          through the origin.

\smallskip
\item[(\SDixII)] There are two orthogonal planes passing through the origin
and two rectangles symmetric
with respect to each of them and with vertices not belonging to them
such that $p_1,\dots,p_m$ are at the vertices of one rectangle
 and  $q_1,\dots,q_n$ are at the vertices of the other one.
\end{enumerate}

For $m=n=3$,
the following spherical analog of Theorem~\ref{t1}
is proven in 
\cite{\refGGLS}.

\begin{theorem}\label{tS}
Let $\min(m,n)\ge 3$.
Let $\bf p$ be a $\mathbb{P}^2$-non-overlapping spherical $(m,n)$-framework.
Then $\bf p$ is flexible if and only if
it satisfies either (\SDixI) or one of the following conditions:

\begin{enumerate}
\item[(\PDixII)] $\bf p$ satisfies (\SDixII) after
          applying the antipodal involution to some joints;

\smallskip
\item[(\DA)] $\langle p_1,q_1\rangle=\langle q_1,p_2\rangle
     =\langle p_2,q_2\rangle=-\langle q_2,p_1\rangle$ and
     $\langle p_0,q_k\rangle=\langle q_0,p_k\rangle=0$, $k=1,2$,
where $m=n=3$, ${\bf p}=(p_0,p_1,p_2;\,q_0,q_1,q_2)$, and
$\langle\,,\,\rangle$ is the scalar product in ${\mathbb R}^3$.
\end{enumerate}
\end{theorem}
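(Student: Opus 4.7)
The plan is to mimic the proofs of Theorems~\ref{t1} and \ref{tH}, substituting $\cos,\sin$ (equivalently, stereographic projection of $S^2\setminus\{\text{pt}\}$ onto $\mathbb{C}$) for $\cosh,\sinh$ (equivalently, the Poincar\'e model) throughout Sections~\ref{main.idea}--\ref{sect.Poin}. As announced in the introduction, all the polynomial identities, notably \eqref{dd}--\eqref{ineq} and \eqref{d[j]}--\eqref{d[1]-d[2]}, survive verbatim, with the convention that $\mathfrak{s}(x)$ now denotes $\sin(x/2)$. The curves $M$, $M_i$, $C_i$, $C_{ij}$ and the polynomials $f_{ij}$, $F_i$ are defined by literally the same recipes; only the parametrizing circles and the defining conjugation~\eqref{eq.conj} are adjusted for the round geometry.

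\textbf{Reduction to $(m,n)=(3,3)$.} The argument at the end of \S\ref{sec1} applies without modification to (\SDixI) and (\PDixII). I would supplement it with a short check that (\DA) cannot be enlarged to any $(m,n)$ with $\max(m,n)\ge 4$: the constraints $\langle p_0,q_k\rangle=\langle q_0,p_k\rangle=0$ place $p_0$ at the pole of the great circle through $q_1,q_2$ and $q_0$ at the pole of the great circle through $p_1,p_2$; combining this with the cycle relation $\langle p_1,q_1\rangle=\langle q_1,p_2\rangle=\langle p_2,q_2\rangle=-\langle q_2,p_1\rangle$ forces any further joint of a given part to coincide (modulo antipode) with one already present, violating the $\mathbb{P}^2$-non-overlapping hypothesis.

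\textbf{Main analysis for $m=n=3$.} I would replay Sections~\ref{sect.prelim}--\ref{sect.EOP} line by line. Lemma~\ref{lem.propor} goes through via the same Gr\"obner-basis check used in the hyperbolic version, and Lemmas~\ref{lem.f[i,j]}, \ref{lem.F[1]}, \ref{lem.F[1].square} go through via the same root-matching analysis. The decisive new phenomenon, however, is that $\mathfrak{s}(x)=0$ now has, in addition to $x=0$, the solutions $x=\pm 2\pi$. Each equation $\mathfrak{s}(A_j^\pm\pm A_0^\pm)=0$ or $\mathfrak{s}(A_2^{\pm_2}\pm A_1^{\pm_1})=0$ therefore splits into two subcases; on the new branch the corresponding pair of rod-lengths sums to $2\pi$ rather than being equal. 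Translating back, the enlarged list of $4$-cycle types admitted by a reducible $\hat f_{1j}$ or $\hat F_1$ contains, beyond parallelograms and deltoids, their ``antipodal twins'' in which one side is replaced by its supplement $\pi$-complement, and it is precisely these twins that feed into the configuration~(\DA).

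\textbf{Main obstacle and conclusion.} The most delicate step will be the spherical version of Lemma~\ref{lem.F[1]}: its three cases must be re-examined because each of $d_1^\pm\equiv d_2^\pm$ and each $\Delta_j^\pm=0$ now has roughly twice as many branches. I expect that, after discarding the solutions excluded by the spherical analogue of \eqref{ineq} and by the $\mathbb{P}^2$-non-overlapping hypothesis, the surviving branches fall into three groups; varying the choice of fixed joints as in \S\ref{sect.EOP}, the first two groups produce Dixon mechanisms of types (\SDixI) and (\PDixII), while the third yields exactly the (\DA) relations. The final computation is already foreshadowed: formula \eqref{last.res} with $\varepsilon=-1$ (``both signs will be needed'') supplies the extra vanishing factor that encodes $\langle p_0,q_k\rangle=\langle q_0,p_k\rangle=0$ and $\langle p_1,q_1\rangle=\langle q_1,p_2\rangle=\langle p_2,q_2\rangle=-\langle q_2,p_1\rangle$, completing the proof.
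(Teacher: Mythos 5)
Your overall strategy (transplant the hyperbolic proof, track the extra zeros of $\mathfrak s$, and supplement the reduction to $(3,3)$ with a check that (\DA) cannot be enlarged) is the right one, and your reduction sketch is essentially Lemmas~\ref{sph1} and \ref{sph2} of the paper. But there is a genuine gap in where you expect the new case (\DA) to appear. You predict that the ``antipodal twin'' branches of the root-matching analysis (the spherical versions of Lemmas~\ref{lem.f[i,j]} and \ref{lem.F[1]}) ``feed into the configuration (\DA)'', and that the final resultant (\ref{last.res}) with $\varepsilon=-1$ ``supplies the extra vanishing factor'' encoding the (\DA) relations. Neither is what happens. After normalizing by antipodal involutions so that $2r_{ij}\le\pi$ whenever $i=0$ or $j=0$, every new branch created by $\mathfrak s(x)=0\Rightarrow x\in\{0,\pm2\pi\}$ collapses back, via an antipodal involution, to a $\mathbb P^2$-parallelogrammatic cycle or a $\mathbb P^2$-deltoid; the Main Lemma therefore has exactly the same form as in the Euclidean and hyperbolic cases, with only ``$\mathbb P^2$-'' prefixes added (and two extra cases, $d_1^+\equiv d_2^-$ and its variant, which must be shown to be contradictory rather than to produce anything new). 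Likewise, (\ref{last.res}) is invoked only in the branch where three parallelogrammatic cycles have already been found, and its nontrivial factors yield (\PDixII), not (\DA); the ``both signs'' remark only serves to exclude the degenerate possibility $u=u_2=0$.

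The case (\DA) actually emerges in the combinatorial endgame, i.e.\ in the spherical analogue of \S\ref{sect.EOP}, and specifically because Lemma~\ref{lem.deltoid} fails on the sphere: in the plane, a deltoid that is not a rhombus leads to a contradiction with Lemma~\ref{collin}, whereas on the sphere the corresponding case analysis (Case 3 of Lemma~\ref{lem.deltoidS}: a fastened deltoid with axis $q_0q_j$ combined with a non-rhombus $\mathbb P^2$-deltoid) does not close up into a contradiction but instead forces $u_{02}=u_{12}=0$ and $u_{00}=-u_{10}$; iterating the Main Lemma over other choices of fixed joints then pins down exactly the orthogonality and sign relations of (\DA). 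Following your plan as written, you would arrive at the $\mathbb P^2$-version of the Main Lemma and then either get stuck or wrongly conclude that only (\SDixI) and (\PDixII) occur, because the step where the proof genuinely diverges from the Euclidean one is the deltoid/parallelogram bookkeeping of \S\ref{sect.EOPS}, not the reducibility analysis of $\hat F_1$ or the final resultant.
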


The paradoxical motion of $\bf p$ in case (\DA) is called in 
\cite{\refGGLS} {\it Constant Diagonal Angle motion}.

\begin{proof}[Proof of Theorem \ref{tS}
under the assumption that it holds for $m=n=3$.]
The proof is almost the same as that of the reduction of Theorem~\ref{t1} to
its case $m=n=3$ (see \S1), except that Lemmas~\ref{sph1} and \ref{sph2}
should be taken into account.
\end{proof}

Following \cite{\refGGLS}, say that points
on a sphere are {\it cocircular} if they lie on a geodesic circle.

\begin{lemma}\label{sph1}
If a $\mathbb{P}^2$-non-overlapping spherical $(3,3)$-framework
satisfies (\PDixII) or (\DA), then the points of any part
 are not cocircular.
\end{lemma}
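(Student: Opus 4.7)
The plan is to treat conditions (\PDixII) and (\DA) separately, and in each case to argue by contradiction, using the standard fact that three distinct points on the unit sphere are cocircular (i.e.~lie on a common great circle) if and only if they are linearly dependent as vectors in $\mathbb{R}^3$.

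For condition (\PDixII), I would first observe that the antipodal involution preserves both $\mathbb{P}^2$-non-overlappingness and cocircularity, since both conditions depend only on the lines through the origin spanned by the joints; thus we may assume that $\bf p$ itself satisfies (\SDixII). Choose coordinates so that the two orthogonal symmetry planes are $\{x=0\}$ and $\{y=0\}$: then the four vertices of the $p$-rectangle necessarily take the form $(\pm a,\pm b,c)$ with $a,b\ne 0$ (vertices do not belong to the symmetry planes). A quick elimination shows that any three of these four vectors are linearly dependent if and only if $c = 0$. But if $c=0$ the four vertices form two antipodal pairs on the sphere, so any three of them contain such a pair, contradicting $\mathbb{P}^2$-non-overlappingness. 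Hence $p_0,p_1,p_2$ are linearly independent, and the identical argument applies to the $q_j$'s.

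For condition (\DA), suppose for contradiction that $p_0,p_1,p_2$ are linearly dependent. Since $p_1\ne\pm p_2$, the vectors $p_1,p_2$ are linearly independent, so $p_0=\alpha p_1+\beta p_2$ for some scalars. Set $c=\langle p_1,q_1\rangle=\langle q_1,p_2\rangle=\langle p_2,q_2\rangle=-\langle q_2,p_1\rangle$; substituting into the conditions $\langle p_0,q_1\rangle=\langle p_0,q_2\rangle=0$ produces the two equations $(\alpha+\beta)c=0$ and $(\beta-\alpha)c=0$. If $c\ne 0$ this forces $\alpha=\beta=0$ and hence $p_0=0$, contradicting $|p_0|=1$; if $c=0$ then both $q_1$ and $q_2$ are orthogonal to the two-dimensional subspace $\mathrm{span}(p_1,p_2)$, forcing them to lie in a common one-dimensional subspace of $\mathbb{R}^3$, which contradicts $q_1\ne\pm q_2$. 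Linear independence of $q_0,q_1,q_2$ then follows by the symmetric argument with the roles of the two parts swapped.

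The main obstacle I anticipate is interpreting (\PDixII) correctly and tracking how $\mathbb{P}^2$-non-overlappingness precisely excludes the degenerate case $c=0$, in which the rectangle is centered at the origin; once the coordinates are chosen, the arithmetic is immediate. The analysis of (\DA) is a one-line computation once one spots that all four prescribed inner products are controlled, up to sign, by a single number $c$.
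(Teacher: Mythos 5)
Your proof is correct and follows essentially the same route as the paper's: both treat (\PDixII) and (\DA) separately, identify cocircularity with linear dependence of the position vectors, and derive a contradiction with $\mathbb{P}^2$-non-overlappingness from the prescribed inner products (the paper dismisses (\PDixII) as evident and handles (\DA) by fixing explicit coordinates with $p_0=(1,0,0)$ and the $p_i$ in the plane $z=0$, whereas you write $p_0=\alpha p_1+\beta p_2$ coordinate-freely, but the substance is identical). The only point worth noting is that condition (\DA) is not literally symmetric under exchanging the two parts, so the ``symmetric argument'' for $q_0,q_1,q_2$ requires redoing the two-line computation with slightly different signs --- it does go through, yielding $(\alpha-\beta)c=(\alpha+\beta)c=0$.
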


\begin{proof} The case of (\PDixII) is evident.
Suppose that $\bf p$ satisfies (\DA) but $p_0,p_1,p_2$ are cocircular.
Then we can choose the coordinates so that
\hbox{$p_0=(1,0,0)$}, $p_1=(x_1,y_1,0)$, $p_2=(x_2,y_2,0)$.
Then $\langle p_0,q_1\rangle=\langle p_0,q_2\rangle=0$ implies
$q_1=(0,y_3,z_3)$, $q_2=(0,y_4,z_4)$ whereas
$\langle q_0,p_1\rangle=\langle q_0,p_2\rangle=0$ implies $q_0=(0,0,\pm1)$
and we may assume $q_0=(0,0,1)$.
The conditions on $\langle p_i,q_j\rangle$, $j=1,2$,
read $y_1y_3=y_2y_3=y_2y_4=-y_1y_4$, hence
$(y_1-y_2)y_3=(y_1+y_2)y_4=0$, i.e.,
either $y_3y_4=0$ or $y_1=y_2=0$.
If $y_3y_4=0$, then $q_0=\pm q_1$ or $q_0=\pm q_2$.
If $y_1=y_2=0$, then $p_0=\pm p_1$. Both cases
are impossible for $\mathbb{P}^2$-non-overlapping frameworks.
\end{proof}

\begin{lemma}\label{sph2}
If a $\mathbb{P}^2$-non-overlapping spherical $(3,3)$-framework 
$\bf p$ satisfies (\DA) and a $\mathbb{P}^2$-non-overlapping framework
${\bf p}'$ is obtained from $\bf p$ by replacing some $q_i$ with
$q'_i\ne\pm q_i$, then ${\bf p}'$ does not satisfy
any of the conditions (\SDixI), (\PDixII), (\DA).
\end{lemma}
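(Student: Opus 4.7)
The plan is to treat the three conditions (\SDixI), (\PDixII), (\DA) separately, exploiting in each case the structural constraints that (\DA) places on $\bf p$: the orthogonalities $p_0\perp q_1,q_2$ and $q_0\perp p_1,p_2$, and Lemma~\ref{sph1} (neither part of $\bf p$ is $\mathbb{P}^2$-cocircular). Since ${\bf p}'$ shares its $p$-joints with $\bf p$, structural properties of that part are inherited. Eliminating (\SDixI) is then immediate: it would place $p_0,p_1,p_2$ in a plane through the origin, hence on a great circle, contradicting Lemma~\ref{sph1}.

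For (\PDixII), I will choose coordinates so that the two orthogonal symmetry planes required by (\SDixII) (after applying antipodes to the appropriate joints of ${\bf p}'$) are $\{x=0\}$ and $\{y=0\}$, with the $p$- and $q$-rectangles lying in the horizontal planes $\{z=c_p\}$ and $\{z=c_q\}$ respectively. Lemma~\ref{sph1} applied to $\bf p$ excludes $c_p=0$; assuming ${\bf p}'$ satisfies (\PDixII), the same lemma applied to ${\bf p}'$ excludes $c_q=0$. In the subcases where $q_0$ is not replaced ($q_0\in{\bf p}'$), $q_0$ occupies a $q$-rectangle vertex (up to antipode), so writing out the equations $\langle q_0,p_i\rangle=0$ for $i=1,2$ and analysing whether $p_1$ and $p_2$ occupy $p$-rectangle vertices that differ in the first coordinate, in the second, or in both, will force in each instance either $a\alpha=0$, $b\beta=0$, or $c_pc_q=0$, each a contradiction. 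The remaining subcase ($q_0$ replaced) is handled symmetrically using $\langle p_0,q_j\rangle=0$ for $j=1,2$. I expect this case analysis to be the main obstacle, as it mixes the rectangle combinatorics with the (\DA) orthogonalities.

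For (\DA), I first record two preliminary nonvanishings in $\bf p$: $\lambda:=\langle p_1,q_1\rangle\neq 0$ (else $q_1,q_2\in\mathrm{span}(p_1,p_2)^\perp=\mathbb{R}q_0$, contradicting $\mathbb{P}^2$-non-overlap) and $\langle p_0,q_0\rangle\neq 0$ (else $p_0,p_1,p_2\in q_0^\perp$ would be cocircular, contradicting Lemma~\ref{sph1}). Assume ${\bf p}'$ satisfies (\DA) with base vertices $p_*,q_*$. The defining property of $p_*$ (orthogonality to the two non-base $q$-joints of ${\bf p}'$), combined with these nonvanishings, forces $p_*=p_0$ in every case. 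If $q_0$ is not the replaced vertex, then similarly $q_*=q_0$, and the (\DA) conditions for ${\bf p}'$ reduce to three linear equations $\langle p_k,q_i'\rangle=\langle p_k,q_i\rangle$ ($k=0,1,2$) on the replaced vertex $q_i'$; the linear independence of $p_0,p_1,p_2$ then yields $q_i'=q_i$, contradicting $q_i'\neq\pm q_i$. If $q_0$ is replaced, then $q_*=q_0'$ is forced, and the orthogonality $q_0'\perp p_1,p_2$ gives $q_0'=\pm q_0$, again a contradiction.
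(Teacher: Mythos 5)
Your proposal is correct and follows essentially the same route as the paper: (\SDixI) is excluded via Lemma~\ref{sph1}, (\PDixII) via the same coordinate/reflection analysis of which rectangle vertices the unreplaced joints occupy combined with the (\DA) orthogonalities, and (\DA) via the fact that five joints determine the sixth up to antipode. Your (\DA) part merely spells out in detail what the paper states in one line (identifying the base vertices $p_*,q_*$ and solving the resulting linear system), which is a legitimate expansion of the same idea.
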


\begin{proof} $\bf p'$ does not satisfy (\SDixI) by Lemma~\ref{sph1}.
It does not satisfy (\DA) because any five joints of a framework satisfying (\DA)
uniquely determine the sixth one up to antipodal involution.
Let us show that ${\bf p}'$ does not satisfy (\PDixII). Suppose it does.
Condition (\DA) is invariant under applying the antipodal involution
to any joint. Hence we may assume that ${\bf p}'$ satisfies (\SDixII) while
$\bf p$ still satisfies (\DA).
Then one can choose coordinates so that each part of ${\bf p}'$ sits at the
vertices of a rectangle invariant under the reflections
$\xi:(x,y,z)\mapsto(-x,y,z)$ and $\eta:(x,y,z)\mapsto(x,-y,z)$.

If $q_i=q_0$, then $q_2$ is the image of $q_1$ under
$\xi$, $\eta$, or $\xi\eta$.
The condition $\langle p_0q_1\rangle=\langle p_0q_2\rangle=0$ then implies
that $p_0$ belongs to $\{y=0\}$, $\{x=0\}$, or $\{z=0\}$ respectively
(see Figure~\ref{fig.lem.sph2}).
This contradicts the conditions that $\bf p$ is $\mathbb{P}^2$-non-overlapping.

If $q_i\ne q_0$, the arguments are the same but
with $q_0,p_1,p_2$ instead of $p_0,q_1,q_2$.
\end{proof}

\begin{figure}[ht]
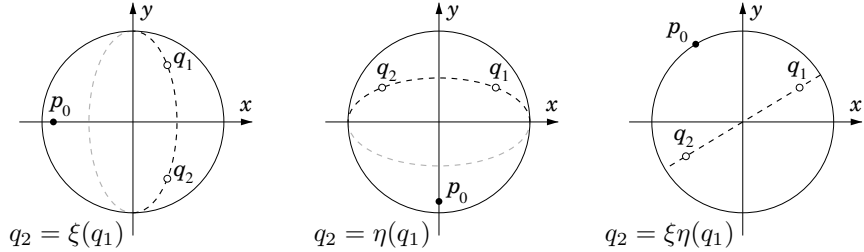

\centerline{
$q_2=\xi(q_1)$\hskip-40pt\includegraphics[width=31 mm]{lem-sph2x.eps}\hbox to 7mm{}
$q_2=\eta(q_1)$\hskip-40pt\includegraphics[width=31 mm]{lem-sph2y.eps}\hbox to 5mm{}
$q_2=\xi\eta(q_1)$\hskip-40pt\includegraphics[width=31 mm]{lem-sph2xy.eps}
}
\caption{The projection onto the $xy$-plane in the proof of Lemma~\ref{sph2}.}
\label{fig.lem.sph2}
\end{figure}

\begin{remark}
Examples as in Remark~\ref{z3} can also be constructed in the
spherical case.
\end{remark}

Below we give a proof of Theorem~\ref{tS} which is an adaptation of the proof
of Theorem~\ref{tH}. All the computations are exactly the same (just with
$\cosh x$ and $\sinh x$ replaced by $\cos x$ and $\sqrt{-1}\sin x$).
However there are more cases to consider because of the antipodal involution,
which can be applied to any joint.


\subsection{Geographic coordinates.
            Spherical version of \S\ref{sect.Loba}} \label{sect.geo}

We define the distance $d_S$ on $S^2\in\mathbb R^3$ as the length of
the shortest geodesic: $d_S(p,q)=\arccos\langle p,q\rangle$.
Lobachevsky's coordinate system in $H^2$ is a hyperbolic analog of the
usual geographic coordinates on the unit sphere:
$x$ (longitude) and $y\in[-\pi/2,\pi/2]$ (latitude). So,
$(x,y)$ are the geographic coordinates of the point
$(\cos x\cos y,\,\sin x\cos y,\,\sin y)$. In these coordinates,
\begin{equation}\label{G.coord}
  \cos d_S\big((x_1,y_1),(x_2,y_2)\big)
   =\cos y_1\cos y_2 \cos(x_2-x_1)+\sin y_1\sin y_2.
\end{equation}
Almost everything in \S\ref{sect.Loba} becomes true if one replaces
$\distH$, $\cosh$, $\sinh$, ``Lobachevsky's coordinates'',
``collinear'', ``non-overlapping'' respectively by
$d_S$, $\cos$, $\sqrt{-1}\sin$, ``geographic coordinates'',
``cocircular'', ``$\mathbb P^2$-non-overlapping''
(compare, for example, (\ref{L.coord}) with (\ref{G.coord})).
The only difference is the following.

\begin{remark}\label{diff.H.S}
In Proposition~\ref{propH}(a) for $S^2$, it is wrong in general that the conditions
$u_{ik}u_{jl}=u_{il}u_{jk}$ with fixed $i$ and $j$ generate all the other conditions
(for example, when $p_i$ is at the North Pole and all the other joints are on
the equator). However, this is true when $u_{ij}\ne 0$.
\end{remark}


\subsection{Stereographic projection onto $\mathbb C$.
  Spherical version of \S\ref{sect.Poin}
} \label{sect.stereo}

While Loba\-chevsky coordinate system is an analog of the geographic coordinates,
the Poincar\'e model is an analog of the stereographic projection
$S^2\to\mathbb C\cup\{\infty\}$ (in fact, the Poincar\'e model is 
the stereographic projection of a hyperboloid in $\mathbb R^3$ endowed
with the Minkowski $(2,1)$-distance).

The stereographic projection identifies $(x,y,z)\in S^2$ with
$(x+y\sqrt{-1}\,)/(1-z)\in\mathbb C\cup\{\infty\}$. Under this identification
we have (cf.~(\ref{distH}))
%
\begin{equation}\label{distS}
      \cos d_S(p,q) = 1 - \frac{2(p-q)(\bar{p}-\bar{q})}
                {(1+p\bar{p})(1+q\bar{q})},
\end{equation}
in particular the $d_S$-circle of radius $r$ centered at $0$ is the
$\mathbb C$-circle $\{|z| = l\}$ where $u = \cos r$,
$l = \rho_S(u)$, and the function
$\rho_S : [-1,1]\to[0,+\infty]$ is defined by $\rho_S(u) = \sqrt{(1-u)/(1+u)}$.

As in \S\ref{sect.Poin}, we set $r_{ij}=d_S(p_i,q_j)$, $R_i=r_{i0}$, $r_j=r_{0j}$,
$r=r_{00}$, and we set (cf.~(\ref{u[i,j]}))
\begin{equation}\label{u[i,j]S}
\begin{matrix}
 u_{ij}=\cos r_{ij}, & U_i=\cos R_i, & u_j=\cos r_j, & u=\cos r,\\
 l_{ij}=\rho_S(u_{ij}), & L_i=\rho_S(U_i), & l_j=\rho_S(u_j), & l=\rho_S(u).
\end{matrix}
\end{equation}
We define the circles $P_i$, $Q_j$, their parametrizations, and the polynomials
$f_{ij}$ and $F_i$ by the same formulas
as in \S\ref{sect.Poin} but with $d_S$ instead of $d_H$ and with
$q_j(t_j) = (l+lt_j)/(1-l^2_j)$.
It turns out that the expressions of $f_{ij}$ and $F_i$ in terms of $u_{ij}$
are exactly the same as in \S\ref{sect.Poin}.
In particular, the equalities (\ref{c0c2})--(\ref{d[1]-d[2]}) with
${\mathfrak s}(x)=\sqrt{-1}\,\sin(x/2)$, and (\ref{last.res}) with $\varepsilon=-1$
hold for spherical frameworks.

\begin{lemma}[cf.~Lemma \ref{lem.propor}]\label{lem.proporS}
If $u\ne 0$, $\bf p$ is $\mathbb P^2$-non-overlapping, and $F_1=\lambda F_2$,
then $\bf p$ satisfies (\SDixI).
\end{lemma}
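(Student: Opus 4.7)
The plan is to mimic the hyperbolic proof of Lemma~\ref{lem.propor} almost word for word. The crucial observation, made in the paragraph just above the present lemma, is that $f_{ij}$ and $F_i$ have \emph{identical} expressions in $u_{ij}$, $U_i$, $u_j$, $u$ in the hyperbolic and spherical settings. Consequently, the coefficients $c_{kl}$ of $t_1^k t_2^l$ in $F_1-\lambda F_2$, viewed as elements of $\mathbb Q[\lambda,u_{ij}]$, coincide with those computed in the hyperbolic case; in particular the displayed formulas for $c_{04}$, $c_{20}$, and $c_{02}$ carry over verbatim.

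For the special values $\lambda=0$ and $\lambda=1$, the equations $c_{04}=c_{20}=c_{02}=0$ force one pair among $\{p_0,p_1,p_2\}$ to be $d_S$-equidistant from all of $q_0,q_1,q_2$. I would then appeal to the spherical analog of Lemma~\ref{collin}, obtained by the systematic substitution $\distH\to d_S$, $\cosh\to\cos$, $\sinh\to\sqrt{-1}\sin$ described in \S\ref{sect.geo}; the hypothesis that $\bf p$ is $\mathbb P^2$-non-overlapping ensures that the relevant $\sin$-factors in the analog of the determinant from the proof of Lemma~\ref{collin} do not vanish. This directly yields $(\SDixI)$.

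In the generic case $\lambda(1-\lambda)\ne 0$, I would copy the Gr\"obner basis argument verbatim: let $e_{kl}$ denote $c_{kl}$ stripped of every factor of the form $u_{ij}\pm 1$ (such factors are nonzero since $p_i\ne\pm q_j$ for a spherical framework), set $b_{ij}=u_{00}u_{ij}-u_{i0}u_{0j}$, and verify that
\begin{equation*}
\bigl(e_{02},\,e_{20},\,e_{03},\,e_{30},\,e_{04},\;1+w_0\lambda(1-\lambda),\;1+w_1 b_{11}+w_2 b_{12}+w_3 b_{21}+w_4 b_{22}\bigr)
\end{equation*}
contains $1$ in $\mathbb Q[\lambda,u_{ij},w_0,\dots,w_4]$. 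Since the $e_{kl}$'s are identical to those in the hyperbolic proof, the very same Gr\"obner basis computation suffices.

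It remains to deduce $(\SDixI)$ from the four relations $b_{ij}=0$, $i,j=1,2$. These relations are precisely the conditions $u_{00}u_{ij}=u_{0j}u_{i0}$ for the 4-cycles $p_0 q_0 p_i q_j$ fastened at $(p_0,q_0)$, i.e., the case $i=j=0$ fixed of part (a) of Proposition~\ref{propH} transcribed to the sphere. By Remark~\ref{diff.H.S}, the hypothesis $u=u_{00}\ne 0$ is precisely what guarantees that these four fastened-cycle equalities generate all remaining 4-cycle equalities, and hence that $\bf p$ satisfies $(\SDixI)$. The one delicate point of the whole plan is exactly this last step: without $u\ne 0$, one cannot propagate the information from the fastened cycles to the unfastened ones, which is why the assumption $u\ne 0$ appears in the statement.
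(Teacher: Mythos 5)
Your treatment of the generic case $\lambda(1-\lambda)\ne 0$ matches the paper's: the same Gr\"obner-basis certificate carries over verbatim, and you correctly locate the only place where $u\ne 0$ enters, namely in passing from the fastened-cycle relations $b_{ij}=u_{00}u_{ij}-u_{i0}u_{0j}=0$ to condition (SD1) via Remark~\ref{diff.H.S}.

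The gap is in the case $\lambda\in\{0,1\}$. You assert that $c_{04}=c_{20}=c_{02}=0$ ``force one pair among $\{p_0,p_1,p_2\}$ to be $d_S$-equidistant from all of $q_0,q_1,q_2$,'' but these coefficients only involve the \emph{squares} of the $u_{ij}$'s: for $\lambda=0$ they give $U_1^2=u^2$, $u_{11}^2=u_1^2$, $u_{12}^2=u_2^2$. In the Euclidean and hyperbolic settings one may drop the squares because lengths, respectively hyperbolic cosines, are positive; on the sphere $u_{ij}=\cos r_{ij}$ may be negative, so each equation only yields $u_{1j}=\pm u_{0j}$ with an a priori independent sign for each $j$. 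One must still show that a \emph{single} choice of sign works for all three $j$ simultaneously (so that $p_1$ and either $p_0$ or its antipode are equidistant from every $q_j$), and this is not a formal consequence of the three displayed coefficients alone. The paper handles exactly this point with a second Gr\"obner-basis computation: it shows that the ideal generated by $e_{02},e_{20},e_{03},e_{30},e_{04}$, $\lambda$, $1+\sum_j(u_{0j}-u_{1j})v_j$, $1+\sum_j(u_{0j}+u_{1j})w_j$, and $1+u_{00}z$ contains $1$ (and similarly for $\lambda=1$, with $e_{11}$ added and indices shifted), which says precisely that under $u\ne 0$ either all differences $u_{0j}-u_{1j}$ vanish or all sums $u_{0j}+u_{1j}$ vanish; note that $u\ne 0$ is needed here too, not only in the generic case. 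Without this consistency argument your appeal to the spherical Lemma~\ref{collin} does not go through.
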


\begin{proof}
The hyperbolic proof of the case $\lambda(1-\lambda)\ne0$ of
Lemma~\ref{lem.propor} (see \S\ref{sect.Poin})
goes without any change in the spherical setting
due to the assumption $u\ne 0$ (see Remark~\ref{diff.H.S}).

The proof of Lemma~\ref{lem.propor} in the case $\lambda(1-\lambda)=0$
does not extend immediately for the sphere because $u_{ij}$ may be negative,
but one can apply the arguments as in the case $\lambda(1-\lambda)\ne0$.
Namely, a computation of the Gr\"obner bases shows that the ideals
$$
  \Big(e_{02},\,e_{20},\,e_{03},\,e_{30},\,e_{04},\,\lambda,\,
  1+\sum_j(u_{0j}-u_{1j})v_j,\,
  1+\sum_j(u_{0j}+u_{1j})w_j,\,1+u_{00}z\Big),
$$
\vskip-15pt
$$
  \Big(e_{11},\,e_{02},\,e_{20},\,e_{03},\,e_{30},\,e_{04},\,
  1-\lambda,\,
  1+\sum_j(u_{1j}-u_{2j})v_j,\,
  1+\sum_j(u_{1j}+u_{2j})w_j,\,1+u_{00}z\Big)
$$
of the ring $\Lambda[v_0,v_1,v_2,w_0,w_1,w_2,z]$ contain $1$.
This means (cf.~the proof of Lemma~\ref{lem.propor}) that the condition
$\lambda=k$ (where $k=0,1$) combined with $u\ne0$ implies that
$p_1$ and either $p_{2k}$ or its antipode are equidistant from each $q_j$.
Hence $\bf p$ satisfies (\SDixI)
by Lemma~\ref{collin}.
\end{proof}


\subsection{Reducibility conditions for $\hat F_i$}

Let the notation be as \S\ref{sect.reduc} (adapted for the spherical case).
We assume that $M$ contains a flexible $\mathbb P^2$-non-overlapping
$(3,3)$-framework $\bf p$.
Recall that $A_j^\pm=r_{1j}\pm r_{0j}$, $j=0,1,2$,
and $\mathfrak s(x)=\sqrt{-1}\,\sin(x)$.
As in \S\ref{sect.reduc}, we simplify the notation setting
$T=T_1$, $S=S_1$, $R=R_1$, $a_j=r_{1j}$.
Without loss of generality we may assume that
\begin{equation}\label{2r<pi}
2r_{ij}\le\pi\qquad\text{ when $i=0$ or $j=0$}
\end{equation}
(this condition can be always achieved
replacing some joints by their antipodes).
Lemma~\ref{nuiz} combined with (\ref{2r<pi}) implies that for $j,k=0,1,2$, $j\ne k$,
we have:
\begin{align}
   {\mathfrak s}(A_j^+ - A_k^-)\ne 0,\qquad
   {\mathfrak s}(A_j^+ &+ A_0^-)\ne 0,\qquad
   {\mathfrak s}(A_j^- + A_0^+)\ne 0,             \label{AA.1}
\\
  {\mathfrak s}(A_j^- + A_0^-)=0\quad&\Rightarrow\quad A_j^- + A_0^-=0,  \label{AA.2}
\\
  {\mathfrak s}(A_j^\pm-A_k^\pm)=0\quad&\Rightarrow\quad A_j^\pm-A_k^\pm=0,\label{AA.3}
\\
  {\mathfrak s}(A_j^+ + A_k^\pm)=0\quad&\Rightarrow\quad
                      \text{$A_j^+ + A_k^\pm = 2\pi\;$ and $\;jk\ne 0$}.   \label{AA.4}
\end{align}

Abusing the language, we define parallelogrammatic cycles and (anti)-parallelograms
as in \S\ref{sect.Loba}.
Say that a 4-cycle is a {\it $\mathbb P^2$-(anti)-parallelogram}
(resp. {\it $\mathbb P^2$-deltoid\/}) if it becomes an (anti)-parallelogram
(resp. deltoid) after applying the antipodal involution to some vertices.

\begin{lemma}[cf.~Lemma~\ref{lem.f[i,j]}]\label{lem.f[i,j]S}
The polynomial $\hat f_{1j}$, $j=1,2$, is reducible over $\mathbb C$ if and only if
the 4-cycle $p_0q_0p_1q_j$ either is $\mathbb P^2$-parallelogrammatic or it is
a $\mathbb P^2$-deltoid.
\end{lemma}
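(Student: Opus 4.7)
The plan is to mimic, step by step, the proof of Lemma~\ref{lem.f[i,j]} in the hyperbolic form recalled in \S\ref{sect.Poin}, since the explicit polynomial identities (\ref{c0c2})--(\ref{d[1]-d[2]}) hold unchanged in the spherical setting with ${\mathfrak s}(x)=\sqrt{-1}\sin(x/2)$. The only essential change is that the uniform non-vanishing law for ${\mathfrak s}$ used in the hyperbolic case is replaced by the refined list (\ref{AA.1})--(\ref{AA.4}), which now admits additional zeros of ${\mathfrak s}$ at $2\pi$. These extra zeros correspond precisely to the antipodal involution on one vertex of the $4$-cycle (which sends a distance $\ell$ to $\pi-\ell$), and are exactly what forces the conclusion to be loosened to ``$\mathbb{P}^2$-parallelogrammatic or $\mathbb{P}^2$-deltoid''.

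Sufficiency is a direct check: the length conditions defining a $\mathbb{P}^2$-parallelogram or $\mathbb{P}^2$-deltoid translate, via (\ref{De[j]}), to $\Delta_j^+=\Delta_j^-=0$, whence $D_j$ is a complete square and $\hat f_{1j}$ factors. For necessity, write $\hat f_{1j}=c_2 t_j^2+c_1 s_j t_j+c_0 s_j^2$ and split into the two cases of the hyperbolic proof. Case~(i): $\hat f_{1j}$ has a non-constant divisor of degree zero in $t_j$, which must divide each $c_k$. Exactly as in \S\ref{sect.Poin}, $\mathrm{Res}_T(c_0,c_2)\doteq(U_1^2-u^2)S^4$ forces $U_1=\pm u$; the alternative $U_1=-u$ is excluded, apart from the degenerate boundary subsumed in $U_1=u$, by (\ref{2r<pi}), since there $\cos R+\cos r\ge 0$. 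Then (\ref{c0c2}) holds, the divisor is $S-T$, and substituting $S=T$, $U_1=u$ into $c_1$ yields $u_j=u_{1j}$; the injectivity of $\cos$ on $[0,\pi]$ gives $a_j=r_j$ and $R=r$, a genuine deltoid.

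In Case~(ii), $\hat f_{1j}$ splits into two factors of degree one in $t_j$, so $D_j$ is a complete square. By (\ref{d[1]-d[2]}) applied with matching indices, $d_j^+-d_j^-\doteq{\mathfrak s}(2a_j)\,{\mathfrak s}(2r_j)\,T\ne0$, thanks to $a_j,r_j\in(0,\pi)$ coming from $\mathbb{P}^2$-non-overlapping and $p_1\ne\pm q_j$; the spherical analog of Lemma~\ref{lem.bb} then forces $\Delta_j^+=\Delta_j^-=0$. Expanding (\ref{De[j]}) and striking the factors non-vanishing by (\ref{AA.1}), $\Delta_j^+=0$ leaves the two alternatives $A_j^+=A_0^+$ (by (\ref{AA.3})) or $A_j^++A_0^+=2\pi$ (the latter being the new spherical option permitted by (\ref{AA.4})); similarly $\Delta_j^-=0$ leaves the three alternatives $A_j^--A_0^+=0$ (forced by the bound $|A_j^-|+|A_0^+|<2\pi$ granted by (\ref{2r<pi}), which excludes the $\pm 2\pi$ options), $A_j^-+A_0^-=0$ (by (\ref{AA.2})), or $A_j^-=A_0^-$ (by (\ref{AA.3})).

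Combining these options yields six linear systems in $r,R,r_j,a_j$. Two are contradictory (they force $r_j=0$ or $a_j=\pi$); the two ``$2\pi$-free'' systems reproduce the hyperbolic dichotomy, giving $a_j=R$, $r_j=r$ (deltoid with axis $p_0p_1$) or $a_j=r$, $r_j=R$ (parallelogrammatic cycle); the two systems containing $A_j^++A_0^+=2\pi$ give $a_j+r=r_j+R=\pi$ or $a_j+R=r_j+r=\pi$, which are precisely the parallelogram and deltoid relations read through an antipodal flip of $q_j$ and of $q_0$ respectively. Together these solutions exhaust all possibilities and realize the cycle $p_0q_0p_1q_j$ as a $\mathbb{P}^2$-parallelogrammatic cycle or a $\mathbb{P}^2$-deltoid. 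The main obstacle is purely organizational: tracking the doubled case analysis created by the new zero set of ${\mathfrak s}$ at $2\pi$, and verifying that each ``$2\pi$-type'' system is exactly the antipodal re-reading of a genuine parallelogram or deltoid rather than a new configuration.
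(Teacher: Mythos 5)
Your overall strategy is the paper's: split into the case where $\hat f_{1j}$ has a non-constant divisor of degree zero in $t_j$ and the case where it splits into two factors of degree one in $t_j$, then push the hyperbolic identities (\ref{c0c2})--(\ref{d[1]-d[2]}) through with the weakened non-vanishing rules (\ref{AA.1})--(\ref{AA.4}). Your treatment of the second case is essentially the paper's: after striking the factors killed by (\ref{AA.1}) one is left with $(A_j^+ + A_0^+ - 2\pi)(A_j^+ - A_0^+) = (A_j^- + A_0^-)(A_j^- - A_0^-)=0$, and the two ``$2\pi$'' systems are exactly $a_j+R=r_j+r=\pi$ and $a_j+r=r_j+R=\pi$, each resolved by an antipodal flip of one joint. (Your extra alternative $A_j^-=A_0^+$ is in fact excluded outright by (\ref{AA.1}), i.e.\ by Lemma~\ref{nuiz}; since you discard the two systems containing it as contradictory, no harm is done, but it need not be carried along.)

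The genuine gap is in the degree-zero-divisor case. You assert that the common divisor must be $S-T$, but this holds only when $u\ne 0$. Your own reduction explicitly admits $U_1=u=0$ (your ``degenerate boundary''), and precisely there the two linear forms $(1-u)S+(1+u)T$ and $(1+u)S+(1-u)T$ in (\ref{c0c2}) both degenerate to $S+T$, so $S+T$ becomes a second admissible divisor. The paper treats this subcase separately: substituting $S=-T$ and $U_1=u=0$ into $c_1$ gives $c_1=4(u_j+u_{1j})T^2$, hence $u_{1j}=-u_j$ with $R=r=\pi/2$ --- a $\mathbb P^2$-deltoid which is \emph{not} a genuine deltoid (it becomes one only after applying the antipodal involution to $p_1$). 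Your conclusion ``a genuine deltoid'' therefore does not account for all configurations arising in this case, and the necessity direction is incomplete. A smaller slip in the sufficiency direction: the deltoid with axis $q_0q_j$ (i.e.\ $R=r$, $a_j=r_j$) does \emph{not} satisfy $\Delta_j^+=0$ in general (only $\Delta_j^-=0$), so $D_j$ need not be a perfect square there; its reducibility comes instead from the common factor $S-T$ of all three coefficients $c_0,c_1,c_2$.
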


\begin{proof}
Let $\hat f_{ij}$ be reducible. Suppose that $\hat f_{ij}$ has a
factor $\hat f_0$ of degree zero in $t_j$.
Write $\hat f_{1j}=c_2 t_j^2+c_1 s_j t_j + c_0 s_j^2$.
As in \S\ref{sect.Poin}, we have
$\text{Res}_{\,T}(c_0,c_2)\doteq(U_1^2-u^2)S^4$, which implies
$U_1=u$ because $U_1,u\ge 0$ by (\ref{2r<pi}). Then
$c_0$ and $c_2$ are as in (\ref{c0c2}). Thus either $\hat f_0=S-T$, or
$u=0$ and $\hat f_0=S+T$. If $\hat f_0=S-T$, we conclude
(as in \S\ref{sect.reduc} and \S\ref{sect.Poin}) that $U_1=u$ and $u_1=u_{11}$,
which corresponds to a deltoid. If $u=0$ and $\hat f_0=S+T$,
then $c_1$ must vanish identically in $T$
after the substitutions $U_1=u=0$, $S=-T$. Performing this substitution, we obtain
$c_1=4(u_j+u_{1j})T^2$. Hence $u_{1j}=-u_j$ and $U_1=u=0$. If we replace $p_1$ by its
antipode, we change the sign of $u_{1j}$ and again obtain a deltoid.

Suppose now that $\hat f_{ij}$ does not have any
factor of degree zero in $t_j$. As in the proof of Lemma~\ref{lem.f[i,j]}, we have
to consider the following two cases.

\smallskip
{\it Case 1.} $d_j^+ \equiv d_j^-$. This is impossible because
$d_j^+ - d_j^- = 4T\sin r_j\sin a_j$
(by (\ref{d[j]}) with ${\mathfrak s}(x)=\sqrt{-1}\,\sin(x/2)$)
and $r_j,a_j\in{]}0,\pi{[}$ since $\bf p$ is $\mathbb P^2$-non-overlapping.

\smallskip
{\it Case 2.} $\Delta_j^+=\Delta_j^-=0$.
By (\ref{De[j]}) combined with (\ref{AA.1})--(\ref{AA.4}),
we then have
$$
  (A_j^+ + A_0^+ - 2\pi)(A_j^+ - A_0^+)=
  (A_j^- + A_0^-)(A_j^- - A_0^-)=0.
$$
This is equivalent to four systems of linear equations. Two of them are (\ref{eq.f[i,j]}).
The other two are equivalent to $a_j+R=r_j+r=\pi$ and $a_j+r=r_j+R=\pi$.
Applying the antipodal involution to $q_j$ (for the former system) or to $p_1$
(for the latter one), we obtain a deltoid or a parallelogrammatic cycle respectively.
\end{proof}

\begin{lemma}[cf.~Lemma~\ref{lem.F[1]}]\label{lem.F[1]S}
If $\hat f_{11}$ and $\hat f_{12}$ are irreducible and
$\hat F_1$ is a non-zero reducible polynomial which is not a power
of an irreducible polynomial, then the 4-cycle
$p_0q_1p_1q_2$ either is $\mathbb P^2$-parallelogrammatic,
or it is a $\mathbb P^2$-deltoid with axis $p_0p_1$.
\end{lemma}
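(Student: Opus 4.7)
The plan is to follow the proof of Lemma~\ref{lem.F[1]} from \S\ref{sect.reduc} almost verbatim, using the spherical identities (\ref{d[j]})--(\ref{d[1]-d[2]}) in place of their hyperbolic counterparts, and using the vanishing conditions (\ref{AA.1})--(\ref{AA.4}) in place of (\ref{ineq}). The new feature is that $\mathfrak s(x)=\sqrt{-1}\sin(x/2)$ vanishes not only at $x=0$ but also at $x=2\pi$; the second root is precisely what will produce the $\mathbb P^2$-parallelogrammatic and $\mathbb P^2$-deltoid cases after applying an antipodal involution to a single joint.

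First I would verify that Lemma~\ref{lem.transv} carries over to the spherical setting without any change, since its proof is purely algebraic and uses only the irreducibility of $\hat f_{1j}$ and the bidegree $(2,2)$ of $\hat f_{1j}$. This gives that the projections $\tilde\pi_j:M_1\to C_{1j}$ are finite and that the two surfaces $\{\hat f_{1j}=0\}$ meet transversally off a finite set. Then, as in the planar proof, the reducibility hypothesis on $\hat F_1$ together with the two-foldness of $\tilde\pi_j$ forces $M_1$ to be reducible; composing the projections on two distinct irreducible components shows that the two-fold covers $\pi_j:C_{1j}\to P_1$ have the same branching locus. Hence $D_1D_2$ is a complete square and $D_1,D_2$ share a root in $T$. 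Lemma~\ref{lem.bb} then implies that one of $d_1^\pm$ coincides with one of $d_2^\pm$.

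The case analysis then splits, exactly as in Lemma~\ref{lem.F[1]}, into three patterns: either (1)~$d_1^+\equiv d_2^+$ and $d_1^-\equiv d_2^-$, or (2)~$d_1^-\equiv d_2^-$ and $\Delta_1^+=\Delta_2^+=0$, or (3)~$d_1^+\equiv d_2^+$ and $\Delta_1^-=\Delta_2^-=0$ (the mixed coincidences $d_1^+\equiv d_2^-$ and $d_1^-\equiv d_2^+$ are again ruled out, this time by (\ref{AA.1})). In each case I use (\ref{d[1]-d[2]}) and (\ref{De[j]}) to convert the identities into equations of the form $\mathfrak s(A_j^{\pm}\pm A_k^{\pm})=0$. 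By (\ref{AA.1})--(\ref{AA.4}), each such equation reduces either to the linear relation $A_j^{\pm}\pm A_k^{\pm}=0$ (as in the hyperbolic case) or, when the sum is $A_j^++A_k^\pm$ with $jk\ne 0$, to the shifted relation $A_j^++A_k^\pm=2\pi$. Solving the resulting linear systems in the unshifted case yields literally the same dichotomy as in the hyperbolic proof: either $a_1=r_2$ and $a_2=r_1$ (parallelogrammatic cycle $p_0q_1p_1q_2$) or $a_1=a_2$ and $r_1=r_2$ (deltoid with axis $p_0p_1$).

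The main obstacle, and the only genuinely new work, is the bookkeeping for the $2\pi$-shifted sub-cases: one must check that replacing the relation $A_j^++A_k^+=0$ or $A_j^++A_k^-=0$ by $A_j^++A_k^+=2\pi$ or $A_j^++A_k^-=2\pi$ corresponds, via the substitution $r_{sk}\mapsto \pi-r_{sk}$ (i.e.\ replacing $q_k$ by its antipode) or $r_{js}\mapsto\pi-r_{js}$ (replacing $p_j$ by its antipode), to exactly the previous unshifted systems. In every sub-case this produces a genuine parallelogrammatic cycle or a genuine deltoid with axis $p_0p_1$ in the modified framework, hence a $\mathbb P^2$-parallelogrammatic cycle or a $\mathbb P^2$-deltoid with axis $p_0p_1$ in the original one. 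This step is tedious but entirely parallel to the corresponding part of the proof of Lemma~\ref{lem.f[i,j]S}, and no new geometric idea is required.
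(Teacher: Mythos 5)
Your overall strategy matches the paper's, and most of the steps (transferring Lemma~\ref{lem.transv}, deducing that $\pi_1$ and $\pi_2$ share branch points, reducing to coincidences among the $d_j^\pm$, and handling the $2\pi$-shifted linear systems by antipodal involutions) are exactly what the paper does. But there is one concrete error: you assert that the mixed coincidences $d_1^+\equiv d_2^-$ and $d_1^-\equiv d_2^+$ are ``again ruled out, this time by (\ref{AA.1})''. They are not. By (\ref{d[1]-d[2]}), $d_2^- - d_1^+\doteq \mathfrak s(A_2^-+A_1^+)\,\mathfrak s(A_2^--A_1^+)\,T$; condition (\ref{AA.1}) kills only the \emph{difference} factor $\mathfrak s(A_2^--A_1^+)$, while the \emph{sum} factor $\mathfrak s(A_2^-+A_1^+)$ is governed by (\ref{AA.4}), which explicitly allows $A_1^++A_2^-=2\pi$ when $jk\ne 0$ --- and here $j=1$, $k=2$. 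So the mixed coincidences survive the a priori exclusion and must be analyzed. This is precisely the point where the spherical case genuinely diverges from the hyperbolic one: in $H^2$ the analogous factor $A_2^-+A_1^+$ is nonzero by (\ref{ineq}), but on $S^2$ only the shifted root $2\pi$ is excluded for differences, not for sums with both indices nonzero.

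The paper handles these as two further cases. For $d_1^+\equiv d_2^-$ and $d_1^-\equiv d_2^+$ simultaneously, one gets $A_1^++A_2^-=A_1^-+A_2^+=2\pi$, hence $r_1=r_2$ and $a_1+a_2=2\pi$, impossible since $a_1,a_2\in\,{]}0,\pi{[}$ for a $\mathbb P^2$-non-overlapping framework. For $d_1^+\equiv d_2^-$ combined with $\Delta_1^-=\Delta_2^+=0$, one sums the three resulting linear relations and obtains $r_1+c=2\pi-a_2$ or $r_1+c=\pi+r_2$ with $c\in\{r,R\}$, contradicting the normalization (\ref{2r<pi}). Neither contradiction is automatic from your cited conditions, and the second one in particular uses (\ref{2r<pi}), which your argument never invokes. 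You need to add these two cases; the rest of your proposal is sound.
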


\begin{proof}
The arguments are as in the proof of Lemma~\ref{lem.F[1]}
but more cases are to be considered.

\smallskip
{\it Case 1.} $d_1^+\equiv d_2^+$ and $d_1^-\equiv d_2^-$.
By (\ref{d[1]-d[2]}) combined with (\ref{AA.2})--(\ref{AA.4}), we have
\begin{equation}\label{eqsAA}
  (A_2^+ + A_1^+ - 2\pi)(A_2^+ - A_1^+)=
  (A_2^- + A_1^-)       (A_2^- - A_1^-)=0.
\end{equation}
This gives us four systems of equations:
(\ref{eqsA}) and two more systems that are equivalent to
$a_1+a_2=r_1+r_2=\pi$ or $a_1+r_2=r_1+a_2=\pi$.
Applying the antipodal involution to $q_1$ or $p_1$,
we obtain a deltoid or a parallelogrammatic cycle respectively.

\smallskip
{\it Case 2.} $d_1^-\equiv d_2^-$ and $\Delta_1^+ = \Delta_2^+ = 0$.
Due to (\ref{De[j]}), (\ref{AA.1})--(\ref{AA.4}),
the second condition yields
$$
  (A_1^+ + A_0^+ - 2\pi)(A_1^+ - A_0^+)=
  (A_2^+ + A_0^+ - 2\pi)(A_2^+ - A_0^+)=0.
$$
Elimination of $A_0^+$ yields $(A_2^+ + A_1^+ - 2\pi)(A_2^+ - A_1^+)=0$.
With $d_1^-\equiv d_2^-$, this yields (\ref{eqsAA}).

\smallskip
{\it Case 3.} $d_1^+\equiv d_2^+$ and $\Delta_1^- = \Delta_2^- = 0$.
Due to (\ref{De[j]}), (\ref{AA.1})--(\ref{AA.4}),
the second condition yields (\ref{eqsA.3}).
Eliminating $A_0^-$ from it, we obtain $(A_2^- + A_1^-)(A_2^- - A_1^-)=0$.
Combining it with $d_1^+\equiv d_2^+$, we again obtain (\ref{eqsAA}).

\smallskip
{\it Case 4.} $d_1^+\equiv d_2^-$ and $d_1^-\equiv d_2^+$.
By (\ref{d[1]-d[2]}) combined with (\ref{AA.1}) and (\ref{AA.4}),
we obtain $A_1^+ + A_2^- = A_1^- + A_2^+ = 2\pi$, whence $r_1=r_2$
and $a_1+a_2=2\pi$.
A contradiction.

\smallskip
{\it Case 5.} $d_1^+\equiv d_2^-$ and $\Delta_1^- = \Delta_2^+ = 0$
(the same arguments for 1 and 2 swapped). By (\ref{d[1]-d[2]}) and (\ref{AA.1}),
the condition $d_1^+\equiv d_2^-$ implies ${A_1^+} + {A_2^-} = 2\pi$.
By (\ref{De[j]}) and (\ref{AA.1}),
the conditions $\Delta_1^-=0$ and $\Delta_2^+=0$ imply
$\pm{A_0^-} - {A_1^-} = 0$ and $({A_0^+} - \pi) \pm ({A_2^+} - \pi) = 0$
respectively.
Summing up the three equations divided by 2, we obtain
$r_1 + c = 2\pi - a_2$ or $r_1 + c = \pi + r_2$ where $c$ is $r$ or $R$.
This fact contradicts (\ref{2r<pi}).
\end{proof}

We summarize Lemmas \ref{lem.f[i,j]S}, \ref{lem.F[1]S}, and \ref{lem.F[1].square}
as follows.

\begin{lemma}[cf.~Lemma~\ref{main.lemma}]\label{main.lemmaS}
Suppose that $u\ne 0$ and $\bf p$ does not satisfy (\SDixI).
Then the $(2,3)$-framework $(p_0,p_1;\,q_0,q_1,q_2)$ contains either
a $\mathbb P^2$-parallelogrammatic cycle, or a fastened
$\mathbb P^2$-deltoid, or a not fastened $\mathbb P^2$-deltoid with
axis $p_0p_1$.
\end{lemma}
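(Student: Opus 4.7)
The plan is to carry out a case analysis on the reducibility of $\hat F_1$ and of its factors $\hat f_{11}$, $\hat f_{12}$, mirroring the (implicit) derivation of Lemma~\ref{main.lemma} from Lemmas~\ref{lem.f[i,j]}, \ref{lem.F[1]}, and \ref{lem.F[1].square} in the Euclidean setting.

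First I would dispose of the case where $\hat F_1$ is identically zero, irreducible, or a positive power of an irreducible polynomial. Lemma~\ref{lem.F[1].square} applies to this case, but in the spherical setting its proof invokes Lemma~\ref{lem.proporS} in place of Lemma~\ref{lem.propor}; this is precisely where the hypothesis $u\ne 0$ gets consumed. The conclusion would be that $\bf p$ satisfies (\SDixI), contradicting the standing hypothesis, so this case cannot occur.

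In the remaining case, $\hat F_1$ is reducible and is not a pure power of an irreducible polynomial. I would then split on whether either of $\hat f_{11}$, $\hat f_{12}$ is reducible. If $\hat f_{1j_0}$ is reducible for some $j_0\in\{1,2\}$, Lemma~\ref{lem.f[i,j]S} tells us that the 4-cycle $p_0q_0p_1q_{j_0}$ is $\mathbb P^2$-parallelogrammatic or a $\mathbb P^2$-deltoid; since it contains the edge $p_0q_0$, it is fastened, giving the second alternative. If instead both $\hat f_{11}$ and $\hat f_{12}$ are irreducible, then Lemma~\ref{lem.F[1]S} applies to the (non-trivially reducible) $\hat F_1$ and yields that the 4-cycle $p_0q_1p_1q_2$ is $\mathbb P^2$-parallelogrammatic or a $\mathbb P^2$-deltoid with axis $p_0p_1$; since this cycle avoids the edge $p_0q_0$, it is not fastened, giving the first or third alternative.

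I do not anticipate any real obstacle here: the statement is a combinatorial bookkeeping summary whose content sits entirely in the three component lemmas. The only subtlety is the careful propagation of the hypothesis $u\ne 0$, which is used exactly once, in applying Lemma~\ref{lem.F[1].square} through Lemma~\ref{lem.proporS}; neither Lemma~\ref{lem.f[i,j]S} nor Lemma~\ref{lem.F[1]S} requires it. One should also verify that the three alternatives produced by the case analysis are mutually exhaustive of the structural possibilities for $\hat F_1$, which they are by construction.
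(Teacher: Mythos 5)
Your proposal matches the paper's own derivation: the paper gives no separate proof of this lemma beyond the sentence ``We summarize Lemmas~\ref{lem.f[i,j]S}, \ref{lem.F[1]S}, and \ref{lem.F[1].square} as follows,'' and your case analysis is exactly that summary, including the correct observation that the hypothesis $u\ne 0$ enters only through Lemma~\ref{lem.proporS} inside the (spherical reading of the) proof of Lemma~\ref{lem.F[1].square}. One ordering slip, though: Lemma~\ref{lem.F[1].square} has the irreducibility of $\hat f_{11}$ and $\hat f_{12}$ among its hypotheses, so you cannot ``first dispose of'' the case where $\hat F_1$ is zero, irreducible, or a power of an irreducible by citing it directly --- the configuration where $\hat F_1$ is such a power while some $\hat f_{1j}$ is reducible is not covered by that lemma, and your analysis declares this configuration impossible rather than handling it. The fix is immediate: split first on whether some $\hat f_{1j}$ is reducible (then Lemma~\ref{lem.f[i,j]S} gives a fastened cycle of the required type, whatever $\hat F_1$ looks like), and only when both are irreducible distinguish the structure of $\hat F_1$ via Lemmas~\ref{lem.F[1].square} and \ref{lem.F[1]S}. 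With that reordering the argument is complete and coincides with the paper's.
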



\subsection{Completing the proof of Theorem~\ref{tS}}\label{sect.EOPS}

Let ${\bf p}=(p_0,p_1,p_2;\,q_0,q_1,q_2)$
be a flexible $\mathbb P^2$-non-overlapping spherical $(3,3)$-framework
which does not satisfy (\SDixI). Let us show that $\bf p$
satisfies (\PDixII) or (\DA). In this subsection we do not identify $S^2$ with
$\mathbb C\cup\{\infty\}$, thus $p_i$ and $q_j$ are just points in
$S^2\subset\mathbb R^3$, and $-p_i$ is the antipode of $p_i$.
As above, we set $u_{ij} = \langle p_i,q_j\rangle = \cos d_S(p_i,p_j)$.

\begin{lemma}\label{0000}
(a). $\bf p$ cannot contain a rhombus with the side length $\pi/2$.

(b). {\rm(Follows from Lemma~\ref{collin})}
    $(u_{0j},u_{1j},u_{2j})\ne(0,0,0)$ for any $j=0,1,2$.
\end{lemma}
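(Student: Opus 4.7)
My plan is to handle parts (a) and (b) separately, each by contradiction.

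For (a), I argue directly from orthogonality and $\mathbb{P}^2$-non-overlap. If ${\bf p}$ contains a rhombus $p_iq_jp_kq_l$ with every side of length $\pi/2$, then the four adjacency orthogonality relations $\langle p_i,q_j\rangle=\langle q_j,p_k\rangle=\langle p_k,q_l\rangle=\langle q_l,p_i\rangle=0$ force $p_i,p_k\in q_j^\perp\cap q_l^\perp$. Since $q_j\ne\pm q_l$ by $\mathbb{P}^2$-non-overlap, this intersection is $1$-dimensional in $\mathbb{R}^3$, hence $p_i=\pm p_k$, contradicting $\mathbb{P}^2$-non-overlap again.

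For (b), suppose $u_{0j}=u_{1j}=u_{2j}=0$ for some $j$. The rods $|p_iq_j|=\pi/2$ are conserved by the flex, so $\langle p_i(t),q_j(t)\rangle=0$ for all $t$ and all $i=0,1,2$; thus $q_j(t)\ne 0$ is perpendicular to each $p_i(t)$, forcing $p_0(t),p_1(t),p_2(t)$ to lie in the $2$-plane $q_j(t)^\perp$ and so to be cocircular on $S^2$ throughout the flex. I then follow the spherical counterpart of the proof of Lemma~\ref{collin}: take geographic coordinates placing $q_j$ at the north pole and the cocircularity great circle of the $p_i$ as the equator, so that $p_i=(\cos\theta_i,\sin\theta_i,0)$ with $\theta_0=0$. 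Differentiating the conserved products $\langle p_i,q_k\rangle=u_{ik}$ for $k\ne j$ and eliminating $\dot y_k$ yields the compatibility condition $(x_{k_1}y_{k_2}-y_{k_1}x_{k_2})(\cot\theta_1-\cot\theta_2)=0$ for nontrivial $(\dot\theta_1,\dot\theta_2)$, where $k_1,k_2$ are the two indices different from $j$. The second factor is nonzero by $\mathbb{P}^2$-non-overlap (distinct and non-antipodal $p_1,p_2$), so $(x_{k_1},y_{k_1})\parallel(x_{k_2},y_{k_2})$; this places both $q_{k_1}$ and $q_{k_2}$ on a common meridian through $q_j$, which is perpendicular to the equator of the $p_i$. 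Hence ${\bf p}$ satisfies (SDixI), contradicting the standing assumption of this subsection.

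The main obstacle is the case $j\ne 0$, in which $q_j$ itself moves and the equator of cocircularity is time-dependent. Since $p_0$ is a fixed joint with $p_0\in q_j(t)^\perp$ throughout, the moving equator always contains $p_0$ and varies only by a single rotational degree of freedom about the $p_0$-axis; passing to a frame co-rotating with $q_j(t)$ fixes $q_j$ at the pole and preserves all rod lengths, reducing the analysis to the previous setup. The only extra care is that the constraint from the off-pole fixed joint $q_0$ in this co-rotating frame contributes the first-order condition $\dot\theta_i(b\cos\theta_i-a\sin\theta_i)=0$, where $(a,b,c)$ are the coordinates of $q_0$ in the rotated frame; on generic configurations this forces $\dot\theta_i=0$ for $i=1,2$, hence precludes any nontrivial flex and contradicts flexibility directly.
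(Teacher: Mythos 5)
Your part (a) is correct and complete: the four orthogonality relations put $p_i$ and $p_k$ on the line $q_j^\perp\cap q_l^\perp$ (one-dimensional since $q_j\ne\pm q_l$), forcing $p_i=\pm p_k$, which $\mathbb P^2$-non-overlap forbids. The paper leaves (a) unproved and for (b) only points to Lemma~\ref{collin}; your $j=0$ argument is exactly the intended fleshing-out of that pointer. There the $p_i(t)$ stay on the fixed equator $q_0^\perp$, the constraints $\langle p_0,q_k\rangle$ freeze $x_k$, and eliminating $\dot y_k$ gives the determinant $(x_{k_1}y_{k_2}-x_{k_2}y_{k_1})(\cot\theta_1-\cot\theta_2)$; the second factor is nonzero since $p_1\ne\pm p_2$ and the projections $(x_k,y_k)$ are nonzero since $q_k\ne\pm q_0$, so all the $q$'s land on one meridian and $\bf p$ satisfies (\SDixI), against the standing assumption. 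That part is sound.

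The case $j\ne0$ is where your argument breaks. In the frame co-rotating about the $p_0$-axis that pins $q_j(t)$ at the pole, the joint $q_0$ is \emph{not} fixed: the rotation $R(t)$ has axis $p_0$, we have $q_0\ne\pm p_0$, and $R(t)\not\equiv\mathrm{id}$ because $q_j$ genuinely moves (Lemma~\ref{fix}). So your ``first-order condition $\dot\theta_i(b\cos\theta_i-a\sin\theta_i)=0$'' is not a valid constraint (it omits the $\dot a,\dot b$ terms), and the appeal to ``generic configurations'' is not available in a lemma that must cover all configurations. Worse, the conclusion you draw --- that the configuration ``precludes any nontrivial flex'' --- is false: a spherical Dixon mechanism of the first kind in which $q_j$ sits at the pole of the great circle carrying the $p_i$'s is a flexible $\mathbb P^2$-non-overlapping framework with $u_{0j}=u_{1j}=u_{2j}=0$. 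The correct conclusion for $j\ne0$, as for $j=0$, must be that $\bf p$ satisfies (\SDixI), and it is the standing assumption of the subsection, not flexibility, that is contradicted. The repair is short: the co-rotated motion is still a non-constant flex of the same framework (were it constant, the original motion would be a rotation about $p_0$ fixing $q_0\ne\pm p_0$, hence constant), now fixing the pair $(p_0,q_j)$; your $j=0$ computation then applies verbatim with $q_j$ as the pole and the two remaining $q$'s --- one of which is the moving image of $q_0$ --- as the off-pole joints $q_{k_1},q_{k_2}$, yielding (\SDixI). Replace the final sentence of your proof by this reduction.
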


\begin{lemma}[cf.~Lemma \ref{lem.deltoid}]\label{lem.deltoidS}
If $\bf p$ contains a $\mathbb P^2$-deltoid which is not a $\mathbb P^2$-rhombus,
then $\bf p$ satisfies (\DA).
\end{lemma}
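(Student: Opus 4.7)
The plan is to adapt the Euclidean proof of Lemma~\ref{lem.deltoid} to the sphere: every metric equality becomes its $\mathbb{P}^2$-analogue (equality of absolute scalar products), and the terminal Euclidean contradiction is, when the sign configuration permits, upgraded to a derivation of (\DA).

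Let $\Delta$ be the given $\mathbb{P}^2$-deltoid. After renumbering within the parts (and, if $u_{00}=0$, swapping $p_0$ with some $p_k$ for which $u_{k0}\ne 0$, guaranteed by Lemma~\ref{0000}(b)), I would put $\Delta=p_0q_1p_1q_2$ with its unique axis either $q_1q_2$ or $p_0p_1$, and with $u_{00}\ne 0$. Since ${\bf p}$ does not satisfy (\SDixI), Lemma~\ref{main.lemmaS} applied to the $(2,3)$-subframework $(p_0,p_1;q_0,q_1,q_2)$ produces a 4-cycle $\Delta'$ of one of three types: $\mathbb{P}^2$-parallelogrammatic, fastened $\mathbb{P}^2$-deltoid, or non-fastened $\mathbb{P}^2$-deltoid with axis $p_0p_1$.

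When $\Delta$ has axis $q_1q_2$, the non-fastened option forces $\Delta'=\Delta$ with a second axis, making $\Delta$ a $\mathbb{P}^2$-rhombus and contradicting the hypothesis; the other two options, by the $\mathbb{P}^2$-analogues of the Euclidean Cases~1--3 in the proof of Lemma~\ref{lem.deltoid}, give $|u_{00}|=|u_{10}|$, hence $p_0$ and $p_1$ are $\mathbb{P}^2$-equidistant from every $q_j$. Writing $u_{0j}=\varepsilon_j u_{1j}$ with $\varepsilon_j\in\{\pm 1\}$, the alternative $\varepsilon_0=\varepsilon_1=\varepsilon_2=+1$ would place ${\bf p}$ in (\SDixI) by the spherical version of Lemma~\ref{collin}---excluded---so some $\varepsilon_j=-1$; then $\langle p_0+p_1,q_j\rangle=0$ and, persisting through the flex and combined with $\mathbb{P}^2$-non-overlap and Lemma~\ref{sph1}, the corresponding $u_{0j}=u_{1j}=0$. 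The case when $\Delta$ has axis $p_0p_1$ is more delicate, because Case (iii) of Lemma~\ref{main.lemmaS} then returns $\Delta$ itself with no added information; here one must either extract the orthogonalities from Cases (i)/(ii) alone---they supply a ``cross pattern'' $|u_{00}|=|u_{11}|=|u_{12}|$, $|u_{10}|=|u_{01}|=|u_{02}|$ whose smaller value a sign/flex analysis pins to zero---or re-invoke Lemma~\ref{main.lemmaS} with a different fixed pair drawn from Lemma~\ref{0000}(b).

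Once the orthogonality skeleton $u_{01}=u_{02}=u_{10}=u_{20}=0$ of (\DA) is in place, the ``Constant Diagonal Angle'' equalities $u_{11}=u_{21}=u_{22}=-u_{12}$ follow by applying Lemma~\ref{lem.f[i,j]S} to the cycle $p_1q_1p_2q_2$ (now a $\mathbb{P}^2$-rhombus with nonzero scalar products) and using the flexibility of ${\bf p}$ together with the exclusion of (\SDixI). The main obstacle I expect is the sign bookkeeping: because (\DA)---unlike (\PDixII)---is not invariant under antipodal involutions, every $\mathbb{P}^2$-equality hides a sign that must be pinned down to match the (\DA) pattern rather than one of its antipodal variants, and this requires a careful analysis of the flex itself, not merely the static framework, with Lemma~\ref{sph1} ruling out the spurious cocircular configurations that the spherical setting permits.
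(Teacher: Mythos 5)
Your overall strategy is the paper's: reduce to a genuine deltoid $\Delta=p_0q_1p_1q_2$, invoke Lemma~\ref{main.lemmaS} on $(p_0,p_1;\,q_0,q_1,q_2)$, and chase signs through the $\mathbb P^2$-analogues of Cases 1--3 of Lemma~\ref{lem.deltoid}. But the step that is supposed to produce (\DA) is wrong. You write $u_{0j}=\varepsilon_j u_{1j}$, note that not all $\varepsilon_j$ can be $+1$, and then claim that $\varepsilon_j=-1$ gives $\langle p_0+p_1,q_j\rangle=0$ and hence, ``persisting through the flex,'' $u_{0j}=u_{1j}=0$. The implication $u_{0j}+u_{1j}=0\Rightarrow u_{0j}=0$ is false (take $p_0=(1,0,0)$, $p_1=(0,1,0)$, $q_j=(1,-1,0)/\sqrt2$), and no flex or non-overlap argument rescues it: in the configuration (\DA) itself one has $u_{00}=-u_{10}\ne 0$ (the paper's Case~3 explicitly records $u=u_{10}=-u_{00}$ with $uv\ne0$ by Lemma~\ref{0000}(a)), so your implication would annihilate exactly the mixed-sign configuration the lemma is meant to detect. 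In the paper the orthogonality $u_{02}=u_{12}=0$ does not come from a vanishing sum; it comes from a clash of two \emph{deltoid-axis} conditions on the same pair of rods ($u_{02}=u_{12}$ from $\Delta$ with axis $q_1q_2$, versus $u_{02}=-u_{12}$ from a second deltoid $\Delta^*$ with axis $q_0q_2$), and the remaining (\DA) relations then require two further applications of Lemma~\ref{main.lemmaS}, to $(p_0,p_1,p_2;\,q_0,q_2)$ and to $(p_0,p_2;\,q_0,q_1,q_2)$, together with Lemma~\ref{0000}(b). None of this is present in your sketch.

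Two smaller but real gaps: your concluding step asserts that the equalities $u_{11}=u_{21}=u_{22}=-u_{12}$ ``follow by applying Lemma~\ref{lem.f[i,j]S} to the cycle $p_1q_1p_2q_2$,'' but that lemma only characterizes reducibility of $\hat f_{1j}$ for cycles fastened at the chosen base pair and does not output these signed equalities; the paper obtains them from the combinatorial elimination recorded in Figure~\ref{fig.lem.deltoidS}. And your treatment of the case where the axis of $\Delta$ is $p_0p_1$ is only a list of options (``cross pattern \dots a sign/flex analysis pins to zero''), not an argument; the clean way out, which you do not mention, is that both the hypotheses and (\DA) are symmetric under exchanging the two parts, so one may always renumber so that the axis is the $q$-diagonal.
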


\begin{proof}
Suppose that $\bf p$ contains a $\mathbb P^2$-deltoid
$\Delta$ which is not a $\mathbb P^2$-rhombus. Without loss of generality
we may assume that $\Delta$ is a deltoid.
Renumber the joints so that
$\Delta=p_0q_1p_1q_2$ and the axes of $\Delta$ is $q_1q_2$, i.e.,
$u_{01}=u_{11}\ne u_{02}=u_{12}$
(see Fig.~\ref{fig.lem}, on the left).

If $u_{00}=u_{10}$, then $p_0$ and $p_1$ are equidistant from each $q_j$
and $\bf p$ satisfies (\SDixI) by Lemma~\ref{collin}.
Hence one of $u_{00},u_{10}$ is non-zero. Up to exchange of $p_0$ and $p_1$,
we may assume that $u_{00}\ne0$.
Hence, by Lemma~\ref{main.lemmaS}, there exist $4$-cycles $\Delta'$ and $\Delta^*$
such that: $\Delta'$ is contained in $(p_0,p_1;\,q_0,q_1,q_2)$,
$\Delta^*$ is obtained from $\Delta'$ by the antipodal involution applied to some joints,
and $\Delta^*$ realizes one of the cases considered below. In each case we treat
only the subcases not covered in the proof of Lemma~\ref{lem.deltoid}.
We consider the subcases up to swapping $p_0\leftrightarrow p_1$
and  $q_1\leftrightarrow q_2$.
If $p_i,q_j$ are vertices of $\Delta'$, we denote the corresponding
vertices of $\Delta^*$ by $p_i^*,q_j^*$ and we set
$u_{ij}^*=\langle p_i^*, q_j^*\rangle$.

\smallskip

{\it Case 1. $\Delta^*$ is a parallelogrammatic cycle.}

\smallskip
{\it Subcase 1a.} $\Delta'=\Delta$.
Let ${\bf u}^*=(u_{01}^*,u_{02}^*,u_{11}^*,u_{12}^*)$.
\begin{itemize}
\item
If ${\bf u}^*=(-u_{01},-u_{02},u_{11},u_{12})$,
then $-u_{01}=u_{12}=u_{02}=-u_{11}$, hence $(p_0,-q_1,p_1,q_2)$ is a rhombus,
thus $\Delta$ is a $\mathbb P^2$-rhombus.
\item
If ${\bf u}^*=(-u_{01},u_{02},-u_{11},u_{12})$,
then $\Delta^*$ is a rhombus.
\item
If ${\bf u}^*=(-u_{01},u_{02},u_{11},-u_{12})$,
then $\Delta$ is a rhombus.
\end{itemize}

\smallskip
{\it Subcase 1b.} $\Delta'=p_0q_0p_1q_1$ and
$(u_{00}^*,u_{01}^*,u_{10}^*,u_{11}^*)=
(u_{00},\varepsilon u_{01},\varepsilon\delta u_{10},\delta u_{11})$,
$\delta,\varepsilon=\pm 1$.
Then $u_{00}=\delta u_{11}=\delta u_{01}=u_{10}$,
hence $p_0$ and $p_1$ are equidistant from
each $q_j$, which contradicts Lemma~\ref{collin} because of our assumption that
$\bf p$ does not satisfy (\SDixI).

\smallskip
{\it Case 2. $\Delta^*$ is a fastened deltoid with axis $p_0p_1$.}
We may assume that $\Delta'=p_0q_0p_1q_1$
and
$(u_{00}^*,u_{01}^*,u_{10}^*,u_{11}^*)=
(u_{00},\varepsilon u_{01},\varepsilon\delta u_{10},\delta u_{11})$,
$\delta,\varepsilon=\pm 1$.
Then $u_{00}=\varepsilon u_{01}=\varepsilon u_{11}=u_{10}$
and we conclude as in Subcase 1b.

\smallskip
{\it Case 3. $\Delta^*$ is a fastened deltoid with axis $q_0q_j$}
(the most interesting case).
We may assume that $\Delta'=p_0q_0p_1q_2$.
Let ${\bf u}^*=(u_{00}^*,u_{02}^*,u_{10}^*,u_{12}^*)$.
If ${\bf u}^*=(-u_{00},u_{02},-u_{10},u_{12})$, then $u_{00}=u_{10}$
and we conclude as in Subcase 1b.
Otherwise we may assume that
${\bf u}^*=(-u_{00},\varepsilon u_{02},u_{10},-\varepsilon u_{12})$, $\varepsilon=\pm 1$.
Then $u_{02}=u_{12}$ (since $\Delta$ is a deltoid with axis $q_1q_2$) and
$u_{02}=-u_{12}$ (since $\Delta^*$ is a deltoid with axis $q_0q_2$).
Hence $u_{02}=u_{12}=0$. We also have $u_{00}=-u_{10}$. Set
$u=u_{10}=-u_{00}$ and $v=u_{01}=u_{11}$. We have $uv\ne 0$ by Lemma~\ref{0000}(a).

\begin{figure}[ht]
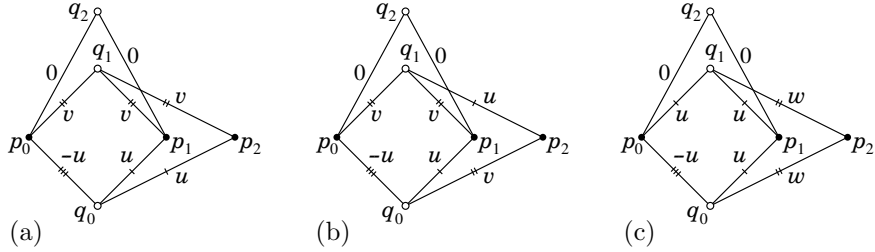

\centerline{
(a)\hskip-14pt\includegraphics[width=35 mm]{lem-cda1.eps}\hbox to 5mm{}
(b)\hskip-14pt\includegraphics[width=35 mm]{lem-cda2.eps}\hbox to 5mm{}
(c)\hskip-14pt\includegraphics[width=35 mm]{lem-cda3.eps}
}
\caption{The $u_{ij}$'s in Case 3 in the proof of Lemma \ref{lem.deltoidS}.}
\label{fig.lem.deltoidS}
\end{figure}

Consider the $(3,2)$-framework $(p_0,p_1,p_2;\,q_0,q_2)$.
By Lemma~\ref{main.lemmaS} it contains either a $\mathbb P^2$-parallelogrammatic cycle
or a $\mathbb P^2$-deltoid. Then one can check that, up to renumbering and
antipodal involutions, the $u_{ij}$'s are as in Figure~\ref{fig.lem.deltoidS}.
The $(2,3)$-framework $(p_0,p_2;\,q_0,q_1,q_2)$ also 
contains a $\mathbb P^2$-parallelogrammatic cycle
or a $\mathbb P^2$-deltoid. Since $u_{22}\ne 0$ by Lemma~\ref{0000}(b),
this is possible only when $w=0$ in Figure~\ref{fig.lem.deltoidS}(c), which means
that {\bf p} satisfies (\DA).

\smallskip
{\it Case 4. $\Delta'=\Delta$ and $\Delta^*$ is a deltoid with axis $p_0p_1$.}
Let ${\bf u}^*=(u_{01}^*,u_{02}^*,u_{11}^*,u_{12}^*)$.
\begin{itemize}
\item
If ${\bf u}^*=(-u_{01},-u_{02},u_{11},u_{12})$,
then $\Delta$ is a rhombus.
\item
If ${\bf u}^*=(-u_{01},u_{02},-u_{11},u_{12})$,
then $\Delta^*$ is a rhombus.
\item
If ${\bf u}^*=(-u_{01},u_{02},u_{11},-u_{12})$,
then $-u_{01}=u_{02}=u_{12}=-u_{11}$, hence $(p_0,-q_1,p_1,q_2)$ is a rhombus,
thus $\Delta$ is a $\mathbb P^2$-rhombus.
\end{itemize}
\end{proof}

\begin{lemma}
\label{lem.paral.sph}
Let $\Pi$ be a $\mathbb P^2$-parallelogrammatic cycle and $\Pi^*$ be
obtained from $\Pi$ by applying the antipodal involution to one of its vertices.
Then either $\Pi$ or $\Pi^*$ is parallelogrammatic.
\end{lemma}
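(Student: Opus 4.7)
The plan is to reduce the lemma to a small case analysis on which subset of vertices of $\Pi$ has been flipped. Label the vertices of $\Pi$ cyclically as $v_1,\dots,v_4$ and set $a_i=d_S(v_i,v_{i+1})$ (indices mod $4$). For $\varepsilon\in\{\pm1\}^4$, write $\Pi_\varepsilon$ for the cycle with vertices $\varepsilon_i v_i$; since $d_S(v,-w)=\pi-d_S(v,w)$, the $i$-th side of $\Pi_\varepsilon$ has length $a_i$ when $\varepsilon_i\varepsilon_{i+1}=+1$ and $\pi-a_i$ otherwise. Observe that $\Pi_\varepsilon=\Pi_{-\varepsilon}$, so up to this global sign every $\varepsilon$ falls into one of four types: (0) no vertex flipped, (1) one vertex flipped, (2) two adjacent vertices flipped, (3) two opposite vertices flipped.

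By hypothesis, $\Pi_\varepsilon$ is parallelogrammatic for some $\varepsilon$, meaning $a_1^\varepsilon=a_3^\varepsilon$ and $a_2^\varepsilon=a_4^\varepsilon$. I would examine these constraints in each type. A short direct check shows that in types (0), (2), and (3), both sides of each opposite pair $\{a_i,a_{i+2}\}$ undergo the same transformation (either both are unchanged or both are replaced by their supplements), so the parallelogrammatic relations reduce to $a_1=a_3$ and $a_2=a_4$; consequently $\Pi$ itself is parallelogrammatic and the lemma holds trivially. Only type (1) yields a genuinely new relation: flipping a single vertex $v_k$ replaces exactly the two sides incident to $v_k$ by their supplements, and equating opposite sides forces $a_1+a_3=a_2+a_4=\pi$.

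To conclude, I would verify that the relation $a_1+a_3=a_2+a_4=\pi$ implies that $\Pi^*$ is parallelogrammatic for \emph{any} choice of a single-vertex flip. Flipping a single vertex replaces the two incident sides by their supplements and leaves the other two unchanged; substituting the relation into the resulting side lengths gives $a'_1=a'_3$ and $a'_2=a'_4$ by inspection. The only substantive step in the whole argument is this brief case analysis on $\varepsilon$, and I do not anticipate any real obstacle.
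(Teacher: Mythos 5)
Your case analysis is correct and complete: reducing modulo the global sign to the four flip types, you correctly identify that only a single-vertex flip can make a non-parallelogrammatic cycle parallelogrammatic, that this forces $a_1+a_3=a_2+a_4=\pi$, and that this relation in turn makes \emph{every} single-vertex flip of $\Pi$ parallelogrammatic. The paper states this lemma without proof, and your argument is precisely the intended one.
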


\begin{lemma}[cf.~Lemma~\ref{lem.paral}]
$\bf p$ cannot contain two distinct $\mathbb P^2$-parallelogrammatic
cycles with three common vertices
\end{lemma}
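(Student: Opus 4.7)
The plan is to mirror the proof of the Euclidean Lemma~\ref{lem.paral}, with an extra bookkeeping layer to handle antipodal involutions. Suppose for contradiction that $\bf p$ contains two distinct $\mathbb{P}^2$-parallelogrammatic cycles $\Pi_1,\Pi_2$ with three common vertices; up to renumbering we take $\Pi_1=q_0p_0q_1p_1$ and $\Pi_2=p_0q_1p_1q_2$, sharing $p_0$, $p_1$, $q_1$.

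The first step is to reduce to the case where $\Pi_1$ and $\Pi_2$ are both strictly parallelogrammatic. The properties ``flexible'', ``$\mathbb{P}^2$-non-overlapping'', and ``not satisfying (\SDixI)'' are all invariant under applying the antipodal involution to an individual joint, and a short $\mathbb{F}_2$-linear calculation---five antipode variables (one per vertex of $\Pi_1\cup\Pi_2$), four equalities to impose, and two compatibility conditions built into the $\mathbb{P}^2$-parallelogrammatic hypothesis via Lemma~\ref{lem.paral.sph}---shows that one can always choose antipodal involutions making both cycles strictly parallelogrammatic simultaneously. After this, writing $u_{ij}=\cos d_S(p_i,q_j)$, the equal-opposite-side relations give $u_{00}=u_{11}=u_{02}$ and $u_{01}=u_{10}=u_{12}$, whence the fourth cycle $p_0q_0p_1q_2$ has side-cosine pattern $(u_{00},u_{10},u_{10},u_{00})$: a deltoid with axis $p_0p_1$.

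Under the hypothesis that $\bf p$ does not satisfy (\DA) --- the context in which this lemma is invoked in the remainder of \S\ref{sect.EOPS} --- Lemma~\ref{lem.deltoidS} forces this deltoid to be a $\mathbb{P}^2$-rhombus. Combined with the parallelogrammatic conditions, all six values $|u_{ij}|$ for $i\in\{0,1\}$, $j\in\{0,1,2\}$ equal a common $c\ge 0$. If $c=0$, the cycle $p_0q_0p_1q_1$ is a rhombus with side $\pi/2$, contradicting Lemma~\ref{0000}(a). If $c>0$, the sign pattern inherited from the parallelogrammatic conditions splits, up to a global antipode, into two essential sub-cases: either all $u_{ij}=c$, in which case $q_0,q_1,q_2$ lie in the intersection of two geodesic $a$-circles ($a=\arccos c$) around $p_0$ and $p_1$ and some two must coincide; or the signs alternate so that $q_0$, $q_2$, and $-q_1$ all lie in the intersection of the $a$-circle around $p_0$ with the $(\pi-a)$-circle around $p_1$, and again at most two points in that intersection force two of $q_0,q_2,-q_1$ to coincide. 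In every case, a coincidence among the $q_j$ (possibly involving an antipode) contradicts $\mathbb{P}^2$-non-overlapping.

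The main obstacle is the final sign-pattern analysis: whereas the Euclidean argument is a clean ``two circles meet in at most two points'' observation, the spherical version must separately handle the supplementary-radius configuration, exploit the specific sign patterns forced by the parallelogrammatic hypotheses to place the $q_j$'s (perhaps after antipode) on a single $\le 2$-point intersection, and invoke Lemma~\ref{0000}(a) to rule out the boundary case $c=0$.
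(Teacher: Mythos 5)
Your proof is correct and follows the paper's intended route: the paper's own proof is the one-line instruction to combine the Euclidean Lemma~\ref{lem.paral} with Lemma~\ref{lem.paral.sph}, and you carry out exactly that combination — flipping the non-shared vertices $q_0,q_2$ to make both cycles genuinely parallelogrammatic, passing to the deltoid $p_0q_0p_1q_2$, and finishing with the two-circles-meet-in-at-most-two-points argument adapted to the spherical sign patterns. Your explicit restriction to the case where (\DA) fails matches the context in which the lemma is invoked in \S\ref{sect.EOPS}, since the Euclidean proof relies on the deltoid lemma, whose spherical analogue Lemma~\ref{lem.deltoidS} only forces a $\mathbb P^2$-rhombus once (\DA) is excluded.
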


\begin{proof} Combine the proof of Lemma~\ref{lem.paral} with Lemma~\ref{lem.paral.sph}.
\end{proof}

The rest of \S\ref{sect.EOP} easily extends to
the spherical case using Lemma~\ref{lem.paral.sph}, but the following
additional argument is needed at the final step.

Suppose that $\bf p$ does not satisfy (\PDixII).
Then $u+u_1+u_2+u_{12}\ne 0$ because otherwise $(p_0,-p_1,p_2;\,q_0,-q_1,q_2)$
would satisfy (\SDixII).
Recall that the spherical version of (\ref{res.bcd})
is (\ref{last.res}) with $\varepsilon=-1$. This product vanishes for each choice
of the sign ``$\pm$'' only if $l=l_2=1$, i.e.,
only if $u=u_2=0$, but this condition contradicts Lemma~\ref{0000}(a).


\end{document}